\newtheorem{theorem}{Theorem}[section]
\newtheorem{corollary}[theorem]{Corollary}
\newtheorem{assumption}[theorem]{Assumption}
\newtheorem{lemma}[theorem]{Lemma}
\newtheorem{proposition}[theorem]{Proposition}
\newtheorem*{claim}{Claim}
\theoremstyle{remark}
\newtheorem{remark}[theorem]{Remark}
\def\x{{\boldsymbol{x}}}
\def\y{{\boldsymbol{y}}}
\def\Y_#1{\boldsymbol{Y}_{\!#1}}
\def\tX{\tilde{\boldsymbol{X}}}
\def\W{\boldsymbol{W}}
\def\E{\mathbb{E}}
\def\P{\mathbb{P}}
\def\R{\mathbb{R}}
\def\TV{\mathrm{TV}}
\definecolor{darkred}{rgb}{0.9,0.1,0.1}
\begin{document}

\title{
Non-asymptotic mixing
of the MALA algorithm
}

\author{N. Bou-Rabee\inst{1}, M. Hairer\inst{1,2}, E. Vanden-Eijnden\inst{1}}
\institute{\scriptsize Courant Institute of Mathematical Sciences, New York University, 
251 Mercer Street, New York, NY 10012-1185\\
 \email{nawaf@cims.nyu.edu, mhairer@cims.nyu.edu, eve2@cims.nyu.edu}
\and
Mathematics Institute, The University of Warwick, Coventry CV4 7AL, U.K.\\
 \email{M.Hairer@Warwick.ac.uk}}

\maketitle

\begin{abstract}
  The Metropolis-Adjusted Langevin Algorithm (MALA), originally
  introduced to sample exactly the invariant measure of certain
  stochastic differential equations (SDE) on infinitely long time
  intervals, can also be used to approximate pathwise the solution of
  these SDEs on finite time intervals. However, when applied to an SDE
  with a nonglobally Lipschitz drift coefficient, the algorithm may
  not have a spectral gap even when the SDE does. This paper
  reconciles MALA's lack of a spectral gap with its ergodicity to the
  invariant measure of the SDE and finite time accuracy.  In
  particular, the paper shows that its convergence to equilibrium
  happens at exponential rate up to terms exponentially small in
  time-stepsize.  This quantification relies on MALA's ability to
  exactly preserve the SDE's invariant measure and accurately
  represent the SDE's transition probability on finite time intervals.
  \\[0.5em]
  {\scriptsize \textit{Keywords:} Stochastic Differential Equations,
    Metropolis-Hastings algorithm, Weak
    Accuracy, Spectral Gap, Geometric Ergodicity}\\[0.5em]
  {\scriptsize \textit{Subject classification:} 60J05 (65C30, 65C05)}
\end{abstract}


\section{Introduction}

The Metropolis-Adjusted Langevin Algorithm (MALA), originally proposed
by Roberts and Tweedie \cite{RoTw1996A, RoTw1996B}, is a technique to
sample exactly complex, high-dimensional probability distributions.
MALA fits the general framework of the Metropolis-Hastings method
\cite{MeRoRoTeTe1953,Ha1970} and can be viewed as a special case of
smart and hybrid Monte-Carlo algorithms \cite{RoDoFr1978,
  DuKePeRo1987}. The main idea of MALA is to obtain the proposal moves
from the forward Euler discretization of an SDE whose invariant
measure is the target distribution one seeks to sample.  Besides being
ergodic with respect to this invariant measure by construction, it was
shown recently that MALA also captures the dynamical behavior of the
solutions to the SDE \cite{BoVa2010A}. Therefore MALA has the nice
feature that it can be used to estimate finite time dynamical
properties along infinitely long trajectories of ergodic SDEs.

Still, one issue with MALA is its theoretical rate of convergence, see
for example \cite{RoTw1996B, CaWaGuMySe2008}.  When applied to
measures with tails that are lighter than Gaussian, it is known that
MALA does not exhibit a geometric rate of convergence to equilibrium
even though the exact solution to the SDE does. The main reason is
that the proposal moves generated by forward Euler are not globally
stable. Indeed for any time-stepsize one can find an energy value
above which the drift in forward Euler gives proposed moves that
increase the energy, in contrast to the exact drift in the SDE which
always centers the solution towards lower energy values.  Since higher
energy values have a lower equilibrium probability weight, these
proposed moves are typically rejected.  While these rejections ensure
that MALA is ergodic, at high energy values they prevent MALA from
having a spectral gap.

The question we investigate in this paper is how severe this problem
is in practical applications. Above we have argued that the main cause
of the lack of geometric convergence is the behavior of the chain at
high energy values. Since the chain is unlikely to reach such high
energy values over finite time horizons, one does not expect their
influence to be significant.  In practice, it is the behavior of MALA
on finite but very long times that is of interest, since this behavior
is what one would experience when running the algorithm on a computer.
The goal of this article is to quantify the non-asymptotic behavior of
MALA.

The main result of this paper states that the convergence of MALA to
its equilibrium distribution happens at exponential rate up to terms
exponentially small in time-stepsize. This can be formulated in the
following way, and will later be reformulated rigorously as
Theorem~\ref{theo:main}:

\begin{claim}
  Let $P_h^n$ denote the $n$-step transition probability of MALA and
  $\mu$ its equilibrium measure.  Set $P = P_h^{\lfloor 1/h \rfloor}$.
  Under natural assumptions on the target distribution $\mu(d
  \boldsymbol{x}) = Z^{-1} \exp(-U( \boldsymbol{x}))\,d\boldsymbol{x}$ (see
  Assumption~\ref{sa}), for $h$ small enough and for all
  $\boldsymbol{x} \in \mathbb{R}^n$ satisfying $U( \boldsymbol{x}) <
  E_0$ there exist positive constants $\rho \in (0,1)$, 
  $C_1(E_0)$ and $C_2$ independent of $h$ such that 
  the bound
\begin{equation} \label{MALAconvergencerate}
\| P^k( \boldsymbol{x}, \cdot) - \mu \|_{\TV} \le  
C_1(E_0) \bigl(\rho^k + e^{-C_2/ h^{1/4}} \bigr)\;, 
\end{equation}
holds for all $k \in \mathbb{N}$.
\end{claim}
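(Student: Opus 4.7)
The plan is to establish geometric ergodicity of the one-unit kernel $P = P_h^{\lfloor 1/h\rfloor}$ via a Harris-type drift/minorization argument on a sub-level set of $U$, and then to show that excursions outside this set contribute only an exponentially small term. Fix an energy threshold $E_\star = c\, h^{-1/4}$ and let $\Omega_\star = \{\x : U(\x) \le E_\star\}$. Under Assumption~\ref{sa}, the SDE admits a Lyapunov function $V(\x) = \exp(\gamma U(\x))$ for a suitable small $\gamma > 0$ such that its time-$1$ transition kernel $Q$ satisfies $QV \le \rho V + K$ with $\rho \in (0,1)$.

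The first main step is to transfer this drift condition from $Q$ to $P$ on $\Omega_\star$, using the weak accuracy of MALA on unit time intervals established in \cite{BoVa2010A}. On $\Omega_\star$ the weak error is small enough that $PV(\x) \le \rho' V(\x) + K'$ for some $\rho' < 1$, provided the parameters $\gamma$ and $c$ are chosen so that the exponential growth of $V$ on $\Omega_\star$ is dominated by the weak-error estimate. A minorization $P(\x, \cdot) \ge \eta\, \nu(\cdot)$ on a compact sub-level set of $V$ then follows from the strict positivity of the Gaussian proposal density and a uniform lower bound on the Metropolis acceptance probability on compacts for $h$ small.

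To handle the behavior of $P$ outside $\Omega_\star$, I would introduce an auxiliary Markov kernel $\tilde P$ that coincides with $P$ on $\Omega_\star$ and is sent back to a reference point upon exit. The drift and minorization then hold globally for $\tilde P$, and Harris's theorem yields $\|\tilde P^k(\x, \cdot) - \tilde \mu\|_{\TV} \le C_1(E_0)\, \rho^k$ for $\x$ with $U(\x) < E_0$, where $\tilde \mu$ is the unique invariant measure of $\tilde P$. A coupling of $P$ with $\tilde P$ gives $\|P^k(\x, \cdot) - \tilde P^k(\x, \cdot)\|_{\TV} \le \P_{\x}[\text{MALA exits } \Omega_\star \text{ within } k\lfloor 1/h\rfloor \text{ steps}]$; I would estimate this by $C k\, e^{-C_2/h^{1/4}}$, using that each Euler proposal has Gaussian jumps of size $O(h^{1/2})$, so reaching energy $E_\star$ from a low-energy state requires a tail event on scale $h^{-1/4}$, with Metropolis rejection further suppressing such jumps. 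A parallel argument based on $\mu$-stationarity of $P$ and the tail bound $\mu(\Omega_\star^c) \le e^{-E_\star}$ yields $\|\mu - \tilde \mu\|_{\TV} \le e^{-C_2'/h^{1/4}}$. Combining these three estimates gives \eqref{MALAconvergencerate} for $k$ up to a polynomial in $h^{-1}$; for larger $k$, the monotonicity of $\|P^k(\x, \cdot) - \mu\|_{\TV}$ in $k$ (since $\mu$ is $P$-invariant) extends the bound to all $k \in \mathbb{N}$.

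The main obstacle is the first step. Since $V = e^{\gamma U}$ grows to $e^{\gamma c\, h^{-1/4}}$ on $\Omega_\star$, the weak-accuracy estimate must be sharp enough to absorb this growth and still yield a geometric drift constant $\rho' < 1$. This balance is what pins down the exponent $1/4$ appearing in the second term of \eqref{MALAconvergencerate} and forces the choice of $E_\star$, $\gamma$, and $c$ to be carefully coupled to the weak-error rate of the scheme.
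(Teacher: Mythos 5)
Your overall architecture matches the paper's: an auxiliary chain agreeing with MALA on the sub-level set $\{U \le \mathrm{const}\cdot h^{-1/4}\}$, Harris' theorem for that chain, a coupling bound via the exit probability, a tail estimate comparing the two invariant measures, and the monotonicity trick to remove the linear $k$-dependence. The one structural difference is your choice of auxiliary chain: you restart at a reference point upon exit, whereas the paper uses a Metropolized chain whose target is $\mu$ conditioned on $R_h$ (all proposals into $R_h^c$ are rejected). The paper's choice is not cosmetic --- it makes the invariant measure of the auxiliary chain \emph{explicitly} equal to $\mu(\cdot \cap R_h)/\mu(R_h)$, so the third term of the decomposition is an immediate tail bound $2\mu(R_h^c) \le 2e^{-\beta E_h/2}$. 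With your restart chain, $\tilde\mu$ is not explicit, and your claimed bound on $\|\mu - \tilde\mu\|_{\TV}$ via ``$\mu$-stationarity and the tail bound'' is not a proof; it can be repaired, but only by routing through the very coupling and ergodicity estimates you are in the middle of establishing, which is circular unless organized carefully.

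The more serious gaps are in the two steps you compress the most. First, the drift condition $PV \le \rho' V + K'$ with $V = e^{\gamma U}$ cannot be obtained from the weak-accuracy results of \cite{BoVa2010A} or any standard weak-error estimate: since $U$ may grow faster than quadratically, $V$ grows faster than $e^{|\x|^2}$ and is therefore \emph{not integrable} against the Gaussian proposal kernel, so $\E^{\x} V(\boldsymbol{X}_1)$ is not even obviously finite. The finiteness and the $\CO(h^2 U^4(\x) V(\x))$ one-step error come only from the decay of the Metropolis acceptance probability at high energies (Lemma~\ref{MALAphiaccuracy} and Proposition~\ref{MALAlocaldrift}); this is the paper's main technical contribution and you flag the difficulty without supplying the mechanism that resolves it. Second, your minorization for the one-unit kernel $P = P_h^{\lfloor 1/h\rfloor}$ from ``strict positivity of the Gaussian proposal'' does not give a constant uniform in $h$: the one-step proposal has variance $\CO(h)$, so naively composing $\lfloor 1/h\rfloor$ one-step minorizations yields $\eta^{1/h} \to 0$. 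The paper instead inherits the minorization from the SDE's kernel $Q_1$ via a finite-time total-variation accuracy estimate (Section~\ref{app:MALAtvaccuracy}), which is itself nontrivial for non-globally-Lipschitz drifts. Relatedly, your exit-probability estimate as a ``Gaussian tail event on scale $h^{-1/4}$'' is not a valid argument --- the chain can climb to energy $E_h$ gradually over $k/h$ steps; the correct bound $\P^{\x}(\tau_h \le n) \lesssim e^{-\theta E_h}\sum_k \E^{\x}\Phi(\boldsymbol{X}_k)$ again requires the Lyapunov drift for $e^{\theta U}$, so everything funnels back to the one missing estimate.
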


Observe from \eqref{MALAconvergencerate} that the distance of MALA to
equilibrium is bounded by the sum of two terms. The first term
converges to $0$ exponentially fast and essentially gives the speed of
convergence to equilibrium for the exact solution to the underlying
SDE.  The second term on the other hand remains bounded away from $0$
as $k \to \infty$.  This term arises from the lack of a spectral gap
in MALA, but its important feature is that it is exponentially small
in $h$.  Therefore, its importance will be negligible in applications
for most practical purposes.

The crux of the proof is the demonstration that MALA inherits some of
the convergence properties of the solution to the underlying SDE up to
exponentially small terms.  This proof relies on finite time accuracy
of MALA, ergodicity of MALA with respect to the exact equilibrium
measure of the SDE, and an application of Harris' theorem.  In fact,
if MALA did not exactly preserve the equilibrium measure of the SDE,
the second term in \eqref{MALAconvergencerate} would not be
exponentially small in the time-stepsize. For example, if MALA was
replaced simply by the uncorrected Euler approximations to the SDE,
then one would expect the size of the error term to be $\CO(h)$.

The estimate \eqref{MALAconvergencerate} does not imply that MALA does
not converge to the equilibrium of the SDE.  In fact, it is known
\cite{RoTw1996B} that the TV distance between MALA and the equilibrium
measure vanishes in the limit as $k \to \infty$.  However, this
asymptotic property provides no insight on the nonasymptotic behavior
of MALA which is the main focus of this paper. In fact, even though
the upper bound in \eqref{MALAconvergencerate} does not converge to
zero in the limit $k \to \infty$, it is the sharpest known bound on
finite time intervals.

The power $1/4$ in the exponentially small term in
\eqref{MALAconvergencerate} is due to the second-order weak accuracy
of the proposal moves generated by the forward Euler scheme, and the
conditions we impose on the potential energy.  In particular, it can
be traced back to the appearance of the factor $U^4(\x)$ appearing in
the statement of Lemma~\ref{MALAphiaccuracy}.  Under the assumptions made in this
paper, this power is sharp.

At the technical level, the main novelty of the proof of our result is twofold.
First, we prove finite-time accuracy of MALA in the total variation 
norm in our setting. While accuracy in total variation of the forward Euler algorithm is known 
\cite{BaTa1995}, it is essential for our analysis to cover situations where the drift of the underlying
SDE is  not globally Lipschitz continuous. Furthermore, we need to keep track of the dependency of the
error estimates with respect to the initial condition. The main idea for this result is to first obtain an error estimate in
some weaker Wasserstein distance, and then to strengthen this into a total variation estimate by making use of the
regularising properties of the one-step transition probabilities of the forward Euler algorithm.
Second, we show that on a very large set, MALA admits a Lyapunov function 
of the type $\Phi(\x) = \exp \bigl(\theta U(\x) \bigr)$ for suitable $\theta > 0$. Since $U$ is allowed to
grow much faster than quadratically at infinity, this Lyapunov function fails to be  integrable with respect to \textit{any} Gaussian 
measure, including of course the transition probabilities of forward Euler. While this leads to 
technical complications, having such a fast-growing Lyapunov function
is a crucial ingredient of our proof, as this is the key to obtaining bounds that are exponentially small
in $h$.

The remainder of this paper is organized as follows.  In Section~\ref{recap},
we will state the main assumptions required for the proof of our main result.  Along the way,
we recall that MALA is ergodic.  In Section~\ref{proof}, the proof of the main result is provided.
This proof relies crucially on comparison with a `patched' MALA algorithm, where the
chain is reflected at the boundaries of a large level set. The accuracy of this
patched algorithm is investigated in Section~\ref{app:MALAtvaccuracy}. Finally, Section~\ref{app:localdriftcondition} 
shows that $\Phi$ is a Lyapunov function for the MALA algorithm (at least on a large domain),
which provides the strong \textit{a priori} bounds
required for our analysis.

\subsection*{Acknowledgements}

{\small
We wish to acknowledge stimulating discussions with Jianfeng Lu, Gareth Roberts, Andrew Stuart, and Jonathan Weare. 
The research of NBR was supported in part by NSF Fellowship DMS-0803095. 
The research of MH was supported by 
an EPSRC Advanced Research Fellowship EP/D071593/1 
and a Royal Society Wolfson Research Merit Award.
The research of EVE was supported in part by NSF grants DMS-0718172 and DMS-0708140, 
and ONR grant N00014-04-1-6046.   
}

\section{A short overview of the MALA algorithm}
\label{recap}

\subsection{Overdamped Langevin equations}

In this paper we focus on overdamped Langevin dynamics on an energy
landscape defined by a potential energy function $U \in \CC^{4}(\mathbb{R}^n, \mathbb{R})$:
\begin{equation}
    d\boldsymbol{Y} = -\nabla U(\boldsymbol{Y}) dt + \sqrt{2 \beta^{-1} } d \boldsymbol{W},~~~
    \boldsymbol{Y}(0) = \boldsymbol{x} \in \mathbb{R}^n \;.
    \label{SDE1}
\end{equation}
Here $\nabla U : \mathbb{R}^n \to \mathbb{R}^n$ denotes the
gradient of the function $U$, $\boldsymbol{W}$ is a standard
$n$-dimensional Wiener process, or Brownian motion, and $\beta>0$ is a
parameter referred to as the inverse temperature.  Under certain
regularity conditions on the potential energy stated in Assumption~\ref{sa} below, 
the solution to \eqref{SDE1} is geometrically ergodic with an invariant probability 
measure $\mu$ that possesses the following density $\pi(\boldsymbol{x})$ with 
respect to Lebesgue measure \cite{Ha1980, RoTw1996B}:
\begin{equation}
  \pi(\boldsymbol{x}) = Z^{-1} \exp(-\beta U(\boldsymbol{x}))
  \label{IM}
\end{equation}
where $Z= \int_{\mathbb{R}^n} \exp(-\beta U(\boldsymbol{x}))d\boldsymbol{x}$.

Before stating assumptions on the potential energy, let us fix some notation.
For a function $G \in \CC^r(\mathbb{R}^n, \mathbb{R})$ and an integer $r>1$, 
let $\nabla G$ and $D^r G$ be the gradient and the $r$th derivative of $G$, respectively.       
Let $| \cdot |$ denote the Euclidean vector norm and $\| \cdot \|$ the Frobenius 
norm.  Let $\mathcal{L}$ denote the generator of \eqref{SDE1} defined for any 
$G \in C^{2}( \mathbb{R}^n, \mathbb{R})$ as
 \begin{align}
 \mathcal{L} G( \boldsymbol{x} ) &\eqdef - \nabla U(\boldsymbol{x}) \cdot \nabla G(\boldsymbol{x}) 
 + \beta^{-1} \Delta G(\boldsymbol{x}) \;. 
 \label{generator}
 \end{align}
For any $t\ge0$, let $Q_t$ denote the transition probabilities of $\boldsymbol{Y}$. We will generally 
make an abuse of notation and use the same symbol for a Markov transition kernel and the associated 
Markov operator.
That is, for any measurable bounded function $\varphi: \mathbb{R}^n \to \mathbb{R}$, we define
$Q_t \varphi: \mathbb{R}^n \to \mathbb{R}$ as \[
( Q_t \varphi) (\boldsymbol{x}) \eqdef 
\int_{\mathbb{R}^n} Q_t(\boldsymbol{x}, d\boldsymbol{y})  \varphi(\boldsymbol{y}) \;.
\] 
Throughout this article, we will make the following assumptions on the potential energy.
Not all of these assumptions will be required for every statement, but we find it
notationally convenient to have a single set of assumptions to refer to.


\begin{assumption}\label{sa}
The potential energy $U \in \CC^{4}(\mathbb{R}^n, \mathbb{R})$ satisfies the
following.
\begin{itemize}
\item[A)]  
One has $U(\x) \ge 1$ and, for any $C>0$ there exists an $E>0$ such that
\[
U( \boldsymbol{x} ) \ge C ( 1+  | \boldsymbol{x} |^2 ) \;,
\]
for all $U(\boldsymbol{x}) > E$.  
\item[B)] 
There exist constants $c \in (0,\beta)$, $d>0$ and $E>0$ such that 
\begin{equ}[e:boundDDU]
  \Delta U(\boldsymbol{x}) \le c |\nabla U(\boldsymbol{x})|^2 - d U(\boldsymbol{x}) \;,
\end{equ}
for all $\boldsymbol{x} \in \mathbb{R}^n$ satisfying $U(\boldsymbol{x})>E$. 
\item[C)]
The Hessian of $U$ is bounded from below in the sense that there exists $C \ge 0$
such that 
\begin{equ}
D^2U(\x)(\boldsymbol{\eta},\boldsymbol{\eta}) \ge -C | \boldsymbol{\eta} |^2\;,
\end{equ}
uniformly for all $\x, \boldsymbol{\eta} \in \R^n$.
\item[D)] 
There exists a constant $C>0$ such that
the first four derivatives of the potential energy 
$U \in C^{4}(\mathbb{R}^n, \mathbb{R})$ are bounded
by the potential energy itself, that is
\[
\left\| D^4 U( \boldsymbol{x} ) \right\| \vee 
\left\| D^3 U( \boldsymbol{x} ) \right\| \vee 
\left\| D^2 U( \boldsymbol{x} ) \right\| \vee  
\left| \nabla U( \boldsymbol{x} ) \right|  \le C U( \boldsymbol{x} ) \;,
 \]
for all $\boldsymbol{x} \in \mathbb{R}^n$.  Recall, the function $\vee$ returns the argument with the maximum value.
\end{itemize}
\end{assumption}

\begin{remark}\label{rem:boundTail}
It follows immediately from
Assumption~\ref{sa} (A) above that exists a constant $E_c>0$ such that 
\begin{equ}[e:boundHigh]
\mu\bigl(\{U(\boldsymbol{x}) \ge E\}\bigr) \le e^{-{\beta E \over 2}} \;,
\end{equ}
for all $E > E_c$. Indeed, it suffices to note that 
\begin{equs}
\mu\bigl(\{U(\boldsymbol{x}) \ge E\}\bigr) &= {1\over Z}\int_{U(\boldsymbol{x}) \ge E} e^{- \beta U(\boldsymbol{x})} d\boldsymbol{x} 
\le {e^{-3 \beta E/4}\over Z} \int_{U(\boldsymbol{x}) \ge E} e^{- \beta U(\boldsymbol{x})/4} d\boldsymbol{x} \\
&\le C e^{-3 \beta E/4}  < e^{- \beta E/2}\;,
\end{equs}
where the second to last inequality follows from point (A) above, and the
last inequality holds for $E$ sufficiently large.
\end{remark}

%
\begin{remark}
The only place where we actually use the fact that $U(\x)$ grows like $|\x|^2$ is in
the proof of Lemma~\ref{MALAphiaccuracy} below. On the other hand, the statement of that
approximation result would certainly be true also for potentials that grow slower at $\infty$. However,
such potentials would not be of interest for the present work.  Indeed, if the potential grows slower than $|\x|^2$ and no slower than
$| \x|$, then MALA can be shown to be exponentially ergodic, so that the results in this 
article would be superfluous.  If the potential grows slower than $|\x|$, then MALA will not be exponentially
ergodic because the true solution of the SDE will not be either.
\end{remark}

\begin{remark} \label{oslremark}
Assumption~\ref{sa} (C) is equivalent to the existence of $C>0$ such that $\nabla U$ satisfies the one-sided Lipschitz property
\[
\left\langle - \nabla U( \boldsymbol{x} ) + \nabla U( \boldsymbol{y} ) , 
\boldsymbol{x} - \boldsymbol{y} \right\rangle  
\le  C | \boldsymbol{x} - \boldsymbol{y} |^2,
~~\forall~\boldsymbol{x}, \boldsymbol{y} \in \mathbb{R}^n \;.
\]
\end{remark}

All of these conditions  are satisfied, for example, if $U$ is smooth and 
$U(\boldsymbol{x}) \approx |\boldsymbol{x}|^\alpha$ with $\alpha > 2$ for 
large values of $\boldsymbol{x}$. However, they also allow for potentials that have very asymmetric
growth at infinity, and they even allow for the potential to grow at exponential speed. 
As a consequence of Assumption~\ref{sa}~(B),
one has the following drift condition on the transition probability of the solution.

\begin{lemma}\label{lem:driftGen}
Let $\Theta \colon \R_+ \to \R$ be a $\CC^2$ function such that there exist $u_0 > 0$ and $\alpha  > 0$
such that $\Theta(u) > 0$, $\Theta'(u) > 0$, $u \Theta'(u) > \alpha \Theta(u)$, and $\Theta''(u) \le (\beta - c) \Theta'(u)$ for $u > u_0$. 
(Here, the constant $c$ is the one appearing in \eref{e:boundDDU} above.)
Then, there exist positive constants $K_\Theta$ and $\gamma_\Theta$ such that 
\begin{equ}
\CL \bigl(\Theta \circ U\bigr) \le K_\Theta - \gamma_\Theta \, \bigl(\Theta \circ U\bigr)\;.
\end{equ}
In particular,
\begin{equ}[e:Lyap]
\bigl(Q_t (\Theta \circ U)\bigr) (\boldsymbol{x}) \le e^{-\gamma_\Theta t} \Theta(U(\boldsymbol{x})) 
+ {K_\Theta\over \gamma_\Theta} ( 1 - e^{-\gamma_\Theta t} ) 
\end{equ}
holds for every $t \ge 0$ and for every $\boldsymbol{x} \in \mathbb{R}^n$.
\end{lemma}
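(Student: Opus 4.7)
The plan is to reduce everything to a direct calculation of $\CL(\Theta \circ U)$ via the chain rule, then exploit the three hypotheses on $\Theta$ together with Assumption~\ref{sa}~(A)--(B) to extract a drift inequality of the form $\CL(\Theta \circ U) \le K_\Theta - \gamma_\Theta (\Theta \circ U)$; the pointwise bound \eref{e:Lyap} will then follow by a standard Itô/Gronwall argument.

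More precisely, first I compute
\begin{equation*}
\CL(\Theta \circ U) = -\Theta'(U)|\nabla U|^2 + \beta^{-1}\Theta''(U)|\nabla U|^2 + \beta^{-1}\Theta'(U)\Delta U\;.
\end{equation*}
Invoking Assumption~\ref{sa}~(B) to substitute $\Delta U \le c|\nabla U|^2 - dU$ on the region $\{U > E\}$ and grouping terms, this rearranges to
\begin{equation*}
\CL(\Theta \circ U) \le \beta^{-1}|\nabla U|^2\bigl[\Theta''(U) - (\beta - c)\Theta'(U)\bigr] - \beta^{-1}d\,\Theta'(U)\,U\;.
\end{equation*}
On the set $\{U > u_0\}$, the hypothesis $\Theta''(u) \le (\beta - c)\Theta'(u)$ makes the bracketed quantity nonpositive, while $u\Theta'(u) > \alpha\Theta(u)$ converts the remaining term into $-\beta^{-1}d\alpha\,\Theta(U)$. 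Setting $\gamma_\Theta \eqdef \beta^{-1}d\alpha$, this establishes $\CL(\Theta \circ U) \le -\gamma_\Theta (\Theta \circ U)$ on the region $\{U > \max(E,u_0)\}$.

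Next I extend the inequality globally. By Assumption~\ref{sa}~(A), the sublevel set $\{U \le \max(E,u_0)\}$ is compact, hence the continuous function $\CL(\Theta \circ U) + \gamma_\Theta(\Theta \circ U)$ is bounded on it by some constant $K_\Theta$; absorbing this constant yields the drift inequality of the lemma. Finally, to obtain \eref{e:Lyap}, I apply Itô's formula to $\Theta(U(\boldsymbol{Y}_t))$ along solutions of \eref{SDE1} starting at $\x$; combined with the drift inequality and Gronwall's lemma applied to $\varphi(t) \eqdef \E[\Theta(U(\boldsymbol{Y}_t))]$, this yields the claimed bound after integration of the ODE $\varphi'(t) \le K_\Theta - \gamma_\Theta \varphi(t)$.

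The main technical point to be careful about is justifying the step $\frac{d}{dt}\E[\Theta(U(\boldsymbol{Y}_t))] \le K_\Theta - \gamma_\Theta \E[\Theta(U(\boldsymbol{Y}_t))]$, since a priori we do not know that $\Theta(U(\boldsymbol{Y}_t))$ is integrable (the function $\Theta$ may grow exponentially fast, for instance $\Theta(u) = e^{\theta u}$, and $U$ itself grows without a priori bound along trajectories). The standard remedy is a localization argument: introduce the stopping times $\tau_R \eqdef \inf\{t \ge 0 : |\boldsymbol{Y}_t| \ge R\}$, apply Itô's formula up to $t \wedge \tau_R$ so that the martingale term has zero expectation, use the drift inequality and Gronwall to control $\E[\Theta(U(\boldsymbol{Y}_{t \wedge \tau_R}))]$ uniformly in $R$, and then pass to the limit $R \to \infty$ via Fatou's lemma. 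Nonexplosion of \eref{SDE1} is guaranteed by the already-established drift condition (applied for instance to $\Theta(u) = u$, which satisfies all the hypotheses of the lemma under Assumption~\ref{sa}~(B)).
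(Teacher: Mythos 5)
Your proposal is correct and follows essentially the same route as the paper: compute $\CL(\Theta\circ U)$ by the chain rule, substitute the bound on $\Delta U$ from Assumption~\ref{sa}~(B), use the hypotheses on $\Theta$ to get $\CL(\Theta\circ U)\le -\gamma_\Theta(\Theta\circ U)$ for $U$ large, absorb the compact region into $K_\Theta$, and integrate the resulting differential inequality. Your localization/Fatou argument for \eref{e:Lyap} supplies a justification the paper leaves implicit, but it is not a different method.
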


\begin{proof}
Using the specific form of $\CL$, it follows that for $U(\x) > u_0$, we have
\begin{equs}
\CL  \bigl(\Theta \circ U\bigr) &= \bigl(\Theta' \circ U\bigr)\,\CL U +  {1\over \beta} \bigl(\Theta'' \circ U\bigr) |\nabla U|^2 \\
&=  \Bigl({\bigl(\Theta'' \circ U \bigr) \over \beta} - \bigl(\Theta' \circ U \bigr)\Bigr)\,|\nabla U|^2 +  {\bigl(\Theta' \circ U\bigr)\over \beta} \Delta U \\
&\le  {1\over \beta} \bigl( (\Theta'' \circ U)  - (\beta - c)(\Theta' \circ U)\bigr)\,|\nabla U|^2 -{d\over \beta}  \bigl(\Theta' \circ U\bigr) U \\
&\le-{d\alpha \over \beta}  \bigl(\Theta \circ U\bigr)\;.
\end{equs}
The result then follows at once from the fact that the condition $u \Theta'(u) > \alpha \Theta(u)$ implies that $\Theta(u) \to \infty$ as 
$u \to \infty$.
\end{proof}

\begin{remark}\label{SDEdriftcondition2}
The condition of Lemma~\ref{lem:driftGen} holds for example for $\Theta(u) = \exp(\theta u)$, provided that 
$\theta < \beta - c$. It also holds for $\Theta(u) = u^\ell$ for every $\ell > 0$ and for $\Theta(u) = u^\ell\exp(\theta u)$
with the same constraints on $\ell$ and $\theta$.
This will be useful in the sequel. Throughout this article, we will write $\Phi(\x) = \exp(\theta U(\x))$ for
some unspecified $\theta < \beta - c$, so that
\begin{equ}[e:LyapPhi]
\CL\Phi \le K - \gamma\Phi\;.
\end{equ}
When the precise value of $\theta$ matters, we will denote the corresponding function by $\Phi_\theta$.
\end{remark}

As a consequence of the ellipticity of the SDE \eqref{SDE1}, one has the 
following minorization condition on the solution's transition probability.

\begin{lemma}\label{SDEminorization}
For every $t>0$ and $E>0$, there exists $\epsilon > 0$ such that 
 \begin{equation} \label{eq:minor}
\| Q_t(\boldsymbol{x}, \cdot) - Q_t(\boldsymbol{y}, \cdot) \|_{\TV} \le 2(1 - \epsilon)\;,
\end{equation}
for all $\boldsymbol{x}, \boldsymbol{y}$ satisfying 
$U(\boldsymbol{x}) \vee U(\boldsymbol{y}) < E$.  
\end{lemma}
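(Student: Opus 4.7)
The plan is to exploit the non-degenerate constant diffusion coefficient of \eref{SDE1}: the transition kernel $Q_t(\x,\cdot)$ admits a smooth, strictly positive Lebesgue density $q_t(\x,\z)$, and once this is established the minorization on sublevel sets is essentially a compactness argument. First I would observe that Assumption~\ref{sa}(A) forces $U$ to be coercive, so the sublevel set $K_E = \{\x : U(\x) \le E\}$ is closed and bounded, hence compact.

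Next I would establish the existence, smoothness and positivity of $q_t(\x,\z)$. The technical subtlety is that $\nabla U$ is not globally Lipschitz and can grow arbitrarily fast, so standard heat-kernel theorems do not apply off the shelf. I would handle this by localization: for each $R > 0$, let $\nabla U_R$ be a smooth compactly supported modification of $\nabla U$ that agrees with $\nabla U$ on the ball $B_R$ of radius $R$, and consider the stopped SDE $d\tilde \boldsymbol{Y} = -\nabla U_R(\tilde \boldsymbol{Y})\,dt + \sqrt{2\beta^{-1}}\, d\W$. Its transition density $\tilde q_t^R(\x,\z)$ exists, is jointly $\CC^\infty$, and is strictly positive everywhere by standard parabolic theory for the uniformly elliptic Kolmogorov equation with bounded smooth coefficients (equivalently, by the Stroock--Varadhan support theorem combined with the Aronson Gaussian bounds). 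Writing $\tau_R$ for the first exit time from $B_R$, one has
\begin{equ}
Q_t(\x, A) \ge \E_\x\bigl[\mathbf{1}_{\tilde \boldsymbol{Y}_t \in A}\,\mathbf{1}_{\tau_R > t}\bigr]\;,
\end{equ}
since the two processes coincide up to $\tau_R$. The Lyapunov bound \eref{e:Lyap} applied to $\Theta(u)=u$, combined with Chebyshev's inequality, yields $\P_\x[\tau_R \le t] \to 0$ uniformly over $\x \in K_E$ as $R \to \infty$, so for $R$ large enough we get a lower bound
\begin{equ}
Q_t(\x,d\z) \ge q_t(\x,\z)\,d\z\;,\qquad q_t(\x,\z) \ge \tilde q_t^R(\x,\z) - o(1)
\end{equ}
(more precisely, one can pick a bounded open $B \subset B_R$ on which the correction is negligible). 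The point is that the restriction of $Q_t(\x, \cdot)$ to any bounded set is absolutely continuous with a continuous, strictly positive density, and this density depends continuously on $\x$.

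Finally I would conclude via the density representation of total variation: writing $Q_t(\x, d\z) \ge q_t(\x,\z)\,d\z$, one has
\begin{equ}
\|Q_t(\x,\cdot) - Q_t(\y,\cdot)\|_{\TV} = 2\left(1 - \int_{\R^n} \min\bigl(q_t(\x,\z), q_t(\y,\z)\bigr)\,d\z\right)\;.
\end{equ}
Fix any bounded open set $B \subset \R^n$. By continuity of $q_t$ on the compact set $K_E \times \overline{B}$ and strict positivity on the interior, $c_B \eqdef \inf_{\x \in K_E,\,\z \in \overline{B}} q_t(\x,\z) > 0$. Hence for all $\x,\y$ with $U(\x) \vee U(\y) < E$,
\begin{equ}
\int_{\R^n} \min\bigl(q_t(\x,\z), q_t(\y,\z)\bigr)\,d\z \ge c_B \,\vol(B) \eqdef \epsilon > 0\;,
\end{equ}
which gives the claim.

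The main obstacle is the construction and positivity of $q_t$ in the face of a drift that may grow exponentially; this is dealt with entirely by localization, using the non-explosion provided by Lemma~\ref{lem:driftGen}. Once positivity and joint continuity of $q_t$ on compact sets are in hand, the rest is a one-line compactness argument.
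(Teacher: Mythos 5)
Your overall strategy is exactly the one used in the paper: establish that $Q_t(\boldsymbol{x},\cdot)$ has a jointly continuous, strictly positive density, then use compactness of the sublevel set $\{U<E\}$ to extract a common minorizing measure $\epsilon\,\eta$, which bounds the total variation away from $2$. The paper simply cites Lemma~2.2 of \cite{Ta2002} for the smooth strictly positive density and then performs the same final step (phrased via the residual kernel $\tilde Q_t$ rather than the overlap integral; the two are equivalent).

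Two sub-steps of your localization argument are stated more loosely than they should be. First, $\{\tau_R\le t\}$ concerns the running supremum of the process, so \eref{e:Lyap} evaluated at time $t$ plus Chebyshev does not control it; you need the supermartingale maximal inequality applied to $e^{\gamma s}U(\boldsymbol{Y}(s))-K\int_0^s e^{\gamma r}\,dr$ (or to $\Theta\circ U$ for a $\Theta$ as in Lemma~\ref{lem:driftGen}), which does yield $\sup_{U(\boldsymbol{x})\le E}\P^{\boldsymbol{x}}[\tau_R\le t]\to 0$. Second, the inequality $q_t(\boldsymbol{x},\boldsymbol{z})\ge \tilde q_t^R(\boldsymbol{x},\boldsymbol{z})-o(1)$ is not justified as written: the correction term is $\E^{\boldsymbol{x}}\bigl[\mathbf{1}_{\tau_R\le t}\,\tilde q^R_{t-\tau_R}(\tilde{\boldsymbol{Y}}(\tau_R),\boldsymbol{z})\bigr]$, and the Gaussian upper bounds on $\tilde q^R$ carry constants depending on $\sup_{B_R}|\nabla U_R|$, which may grow so fast with $R$ (the potential is allowed to grow exponentially) that the product with $\P^{\boldsymbol{x}}[\tau_R\le t]$ need not vanish. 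The clean fix is to drop the subtraction entirely: for a fixed bounded smooth domain $B'$ containing $\{U\le E\}$, the diffusion killed on exiting $B'$ has a Dirichlet heat kernel $p_t^{B'}(\boldsymbol{x},\boldsymbol{z})$ that is jointly continuous and strictly positive on the interior (parabolic strong maximum principle, with smooth bounded coefficients on $\overline{B'}$), and $Q_t(\boldsymbol{x},A)\ge\int_A p_t^{B'}(\boldsymbol{x},\boldsymbol{z})\,d\boldsymbol{z}$; your compactness and overlap step then applies verbatim. With these repairs the proof is correct.
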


\begin{remark}
Here and in the sequel, the total variation distance between two probability measures
is defined as
\begin{equ}
\|\mu - \nu\|_\TV = 2\sup_A |\mu(A) - \nu(A)|\;,
\end{equ}
where the supremum runs over all measurable sets. In particular, the total variation distance
between two probability measures is two if and only if they are mutually singular.
\end{remark}

\begin{proof}[Proof of Lemma~\ref{SDEminorization}]
It follows from the ellipticity of the equations that there exists a 
function $q(t,\boldsymbol{x},\boldsymbol{y})$  smooth in all of its arguments 
(for $t > 0$) such that the transition probabilities are given by  
$Q_t(\boldsymbol{x},d\boldsymbol{y}) = q(t,\boldsymbol{x},\boldsymbol{y})\,d\boldsymbol{y}$. 
Furthermore, $q$ is strictly positive (see, e.g., Lemma 2.2 of \cite{Ta2002}). 
Hence, by the compactness of the set $\{ \boldsymbol{x} \,:\, U(\boldsymbol{x}) < E\}$, one 
can find a probability measure $\eta$ and a constant $\epsilon>0$ such that, \[
Q_t(\boldsymbol{x}, \cdot) > \epsilon \eta(\cdot) 
\]
for any $\boldsymbol{x}$ satisfying $U(\boldsymbol{x})<E$.   This condition implies the
following transition probability $\tilde{Q}_t$ is well-defined: \[
\tilde{Q}_t(\boldsymbol{x}, \cdot) 
= \frac{1}{1-\epsilon} Q_t(\boldsymbol{x}, \cdot) - \frac{\epsilon}{1-\epsilon} \eta(\cdot) 
\]
for any $\boldsymbol{x}$ satisfying $\Phi(\boldsymbol{x})<E$.   Therefore, \[
\| Q_t(\boldsymbol{x}, \cdot) - Q_t(\boldsymbol{y}, \cdot) \|_{\TV} =
(1-\epsilon) \|  \tilde{Q}_t(\boldsymbol{x}, \cdot) - \tilde{Q}_t(\boldsymbol{y}, \cdot) \|_{\TV}
\]
for all $\boldsymbol{x}, \boldsymbol{y}$ satisfying 
$U(\boldsymbol{x}) \vee U(\boldsymbol{y}) < E$.  
Since the TV norm is bounded by $2$, one obtains the desired result.
\end{proof}

Harris' theorem can now be invoked to conclude the transition
probability of the true solution converges at a geometric rate 
to its equilibrium measure.  For the reader's convenience, we state the precise version
used in this article. For a proof, see the monograph \cite{MeTw2009}, or \cite{HaMa2008} for a shorter 
and somewhat more constructive version.
Harris' theorem essentially states that if a Markov chain $\CP$ on an arbitrary (Polish) state space $\CX$
admits a Lyapunov function such that its sublevel sets are `small', then it is exponentially ergodic.
More precisely, Harris' theorem applies to any Markov chain that satisfies the following assumptions:

\begin{assumption}[Drift Condition]
There exists a function $\Phi: \CX \to \mathbb{R}^+$ and constants 
$\gamma \in (0,1)$ and $K \ge 0$, such that the Markov chain $\mathcal{P}$ 
satisfies \begin{equation} \label{drift}
(\mathcal{P} \Phi )( \boldsymbol{x}) \le \gamma \Phi(\boldsymbol{x}) + K\;,
\end{equation}
for all $\boldsymbol{x} \in \CX$.
\label{driftcondition}
\end{assumption} 

\begin{assumption}[Associated `Minorization' Condition]
There exists a constant $\alpha \in (0,1)$ so that 
the Markov chain $\mathcal{P}$ satisfies 
\begin{equation} \label{weakerminor}
\| \mathcal{P}(\boldsymbol{x}, \cdot) - \mathcal{P}(\boldsymbol{y}, \cdot) \|_{\TV} \le 2(1 - \alpha)\;,
\end{equation}
for all $\boldsymbol{x}, \boldsymbol{y} \in \mathbb{R}^n$ with 
$\Phi(\boldsymbol{x}) + \Phi(\boldsymbol{y}) \le 4 K / (1- \gamma)$, where
 $K$ and $\gamma$ are the constants from Assumption~\ref{driftcondition}.
\label{minorization}
\end{assumption}

Note that in this statement, we have normalised the total variation distance between two probability
measures in such a  way that it is equal to $2$ if and only if the measures are mutually singular.
One then has:

\begin{theorem}[Harris' theorem]\label{theo:Harris}
Suppose a Markov chain $\mathcal{P}(\boldsymbol{x}, dy)$ on $\mathbb{R}^n$ satisfies 
Assumptions~\ref{driftcondition}  and~\ref{minorization}.  
Then there exists a unique invariant measure $\mu$ for $\CP$ and there are constants $C>0$
and $\rho < 1$, both depending only on the constants $\gamma$, $K$ and $\alpha$ 
appearing in the assumptions, such that
\[
\|\mathcal{P}^n(\boldsymbol{x},\cdot\,) - \mu\|_\TV \le C\rho^n \Phi(\boldsymbol{x})\;,
\]
for any $\boldsymbol{x} \in \mathbb{R}^n$.  
\label{harris}
\end{theorem}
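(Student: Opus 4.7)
The plan is to follow the coupling approach of Hairer--Mattingly~\cite{HaMa2008}, constructing a weighted total variation distance in which $\CP$ is a strict contraction; uniqueness of $\mu$ and the convergence rate then follow by standard arguments. For a parameter $\beta > 0$ to be fixed, let
\[
d_\beta(\mu_1, \mu_2) \eqdef \int_{\mathbb{R}^n} (1 + \beta \Phi(\boldsymbol{x}))\, |\mu_1 - \mu_2|(d\boldsymbol{x})\;,
\]
which by Kantorovich duality coincides with the Wasserstein distance associated with the semi-metric $\rho_\beta(\boldsymbol{x},\boldsymbol{y}) = (2 + \beta\Phi(\boldsymbol{x}) + \beta\Phi(\boldsymbol{y}))\mathbf{1}_{\boldsymbol{x} \neq \boldsymbol{y}}$. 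Note that $\|\mu_1-\mu_2\|_\TV \le d_\beta(\mu_1,\mu_2)$ and $d_\beta(\delta_{\boldsymbol{x}}, \delta_{\boldsymbol{y}}) = \rho_\beta(\boldsymbol{x},\boldsymbol{y})$.

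The core step is to show that for $\beta$ sufficiently small there exists $\bar\rho \in (0,1)$ with $d_\beta(\CP \delta_{\boldsymbol{x}}, \CP \delta_{\boldsymbol{y}}) \le \bar\rho\, \rho_\beta(\boldsymbol{x},\boldsymbol{y})$ for all $\boldsymbol{x} \neq \boldsymbol{y}$; by linearity this promotes to $d_\beta(\CP\mu_1, \CP\mu_2) \le \bar\rho\, d_\beta(\mu_1,\mu_2)$. I would split the verification by the size of $s \eqdef \Phi(\boldsymbol{x})+\Phi(\boldsymbol{y})$ against the threshold $R_0 \eqdef 4K/(1-\gamma)$ appearing in Assumption~\ref{minorization}. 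In the \emph{large-energy} regime $s \ge R_0$, only the drift condition is needed: using $|\CP\delta_{\boldsymbol{x}} - \CP\delta_{\boldsymbol{y}}| \le \CP\delta_{\boldsymbol{x}} + \CP\delta_{\boldsymbol{y}}$ gives
\[
d_\beta(\CP \delta_{\boldsymbol{x}}, \CP \delta_{\boldsymbol{y}}) \le 2 + \beta(\CP\Phi(\boldsymbol{x}) + \CP\Phi(\boldsymbol{y})) \le 2 + \beta\gamma s + 2\beta K\;,
\]
so the ratio to $\rho_\beta(\boldsymbol{x},\boldsymbol{y}) = 2+\beta s$ equals $\gamma + \tfrac{2(1-\gamma)+2\beta K}{2+\beta s}$, which is strictly less than $1$ and decreasing in $s$; its supremum over $s\ge R_0$ is a constant $\bar\rho_1 < 1$. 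In the \emph{small-energy} regime $s<R_0$, Assumption~\ref{minorization} yields $\|\CP(\boldsymbol{x},\cdot)-\CP(\boldsymbol{y},\cdot)\|_\TV \le 2(1-\alpha)$, while the drift controls the $\Phi$-weighted part via $\int \beta\Phi\, d|\CP(\boldsymbol{x},\cdot)-\CP(\boldsymbol{y},\cdot)| \le \beta(\gamma R_0 + 2K)$. Dividing by $\rho_\beta(\boldsymbol{x},\boldsymbol{y}) \ge 2$ gives a ratio of at most $(1-\alpha) + \beta(\gamma R_0 + 2K)/2$, which is $<1$ as soon as $\beta$ is small relative to $\alpha$. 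Setting $\bar\rho \eqdef \max(\bar\rho_1, \bar\rho_2)$ produces the desired contraction.

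The remaining steps are routine. The space of probability measures with finite $\Phi$-integral, equipped with $d_\beta$, is complete, and the drift condition implies that $\CP$ maps it into itself; Banach's fixed-point theorem thus produces a unique invariant measure $\mu$, with $\int\Phi\, d\mu \le K/(1-\gamma)$ obtained by passing $n\to\infty$ in $\CP^n\Phi \le \gamma^n \Phi + K/(1-\gamma)$. Applying the contraction with $\mu_2 = \mu$ and iterating gives
\[
\|\CP^n(\boldsymbol{x},\cdot)-\mu\|_\TV \le d_\beta(\CP^n\delta_{\boldsymbol{x}}, \mu) \le \bar\rho^n d_\beta(\delta_{\boldsymbol{x}}, \mu) \le C\bar\rho^n\, \Phi(\boldsymbol{x})\;,
\]
where the last bound uses $d_\beta(\delta_{\boldsymbol{x}},\mu) \le 2 + \beta\Phi(\boldsymbol{x}) + \beta K/(1-\gamma)$ together with $\Phi$ being bounded below by a positive constant (true in our application since $U \ge 1$ and $\Phi = \exp(\theta U)$). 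This is the claimed bound with $\rho=\bar\rho$.

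The main delicate point is calibrating $\beta$ to balance the two regimes: the large-energy constant $\bar\rho_1$ deteriorates to $1$ as $\beta \to 0$ (the drift gain is invisible when $\rho_\beta$ is itself close to $2$), whereas $\bar\rho_2$ deteriorates to $1$ once $\beta$ grows past $2\alpha/(\gamma R_0 + 2K)$. There is, however, a nontrivial window of $\beta$ on which both constants are strictly below $1$; any $\beta$ in this window closes the proof, with the final contraction rate being of order $1 - c\alpha(1-\gamma)$ for explicit universal $c$.
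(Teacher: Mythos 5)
Your argument is correct, and it is essentially the proof the paper relies on: the paper does not prove Theorem~\ref{theo:Harris} itself but defers to \cite{MeTw2009} and \cite{HaMa2008}, and your weighted total-variation contraction (splitting on $\Phi(\boldsymbol{x})+\Phi(\boldsymbol{y})$ against $4K/(1-\gamma)$, calibrating $\beta$, then invoking completeness and the fixed-point theorem) is precisely the Hairer--Mattingly route of \cite{HaMa2008}. The only cosmetic remark is that the passage from Dirac masses to general measures uses the convexity/gluing property of the Wasserstein representation rather than bare linearity, and that the final bound $C\rho^n\Phi(\boldsymbol{x})$ indeed needs $\Phi$ bounded below, which you correctly note holds here since $\Phi=\exp(\theta U)$ with $U\ge 1$.
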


With this tool at hand, we obtain the following exponential ergodicity result for
the solutions to \eref{SDE1}:

\begin{theorem} \label{SDEgeometricallyergodic}
Let $U$ be a potential function satisfying Assumption~\ref{sa}.
Then, for every $\theta \in (0, \beta-c)$ there exist positive constants 
$\delta \in (0,1)$ and $C$ such that 
\begin{equ}[e:boundTV]
 \| Q_t^k(\boldsymbol{x}, \cdot) - \mu \|_{\TV}  
 \le C \delta^k \exp\bigl(\theta U(\boldsymbol{x})\bigr)
\end{equ}
for all $t>0$ and all $\boldsymbol{x} \in \R^n$.
\end{theorem}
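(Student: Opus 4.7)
The plan is to apply Harris' theorem (Theorem~\ref{theo:Harris}) to the discrete-time Markov chain obtained by sampling the continuous-time semigroup at a fixed time increment $t > 0$. The two ingredients that must be checked are precisely the drift condition (Assumption~\ref{driftcondition}) and the associated minorization condition (Assumption~\ref{minorization}), and both are available as essentially immediate consequences of results proved earlier in this section.

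First I would verify the drift condition. By Remark~\ref{SDEdriftcondition2}, the choice $\Theta(u) = e^{\theta u}$ with $\theta \in (0, \beta - c)$ satisfies the hypotheses of Lemma~\ref{lem:driftGen}, so \eqref{e:LyapPhi} holds for $\Phi = \Phi_\theta$. Integrating this infinitesimal drift condition as in \eqref{e:Lyap} then yields
\begin{equation}
(Q_t \Phi_\theta)(\boldsymbol{x}) \le e^{-\gamma t} \Phi_\theta(\boldsymbol{x}) + \frac{K}{\gamma}\bigl(1 - e^{-\gamma t}\bigr)\;,
\end{equation}
which is exactly \eqref{drift} for the chain $Q_t$ with constants $\gamma_t = e^{-\gamma t} \in (0,1)$ and $K_t = (K/\gamma)(1 - e^{-\gamma t})$.

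Next I would verify the minorization condition on the sublevel set prescribed by Assumption~\ref{minorization}. The set $\{\boldsymbol{x} : \Phi_\theta(\boldsymbol{x}) \le 4K_t/(1-\gamma_t)\}$ is exactly the sublevel set $\{U(\boldsymbol{x}) \le E_t\}$ with $E_t = \theta^{-1} \log\bigl(4K_t/(1-\gamma_t)\bigr)$. For any pair $\boldsymbol{x}, \boldsymbol{y}$ with $\Phi_\theta(\boldsymbol{x}) + \Phi_\theta(\boldsymbol{y}) \le 4K_t/(1-\gamma_t)$ we in particular have $U(\boldsymbol{x}) \vee U(\boldsymbol{y}) \le E_t$, so Lemma~\ref{SDEminorization} applied with $E = E_t$ yields a constant $\epsilon > 0$ such that $\|Q_t(\boldsymbol{x}, \cdot) - Q_t(\boldsymbol{y}, \cdot)\|_\TV \le 2(1-\epsilon)$ on this set, which is precisely \eqref{weakerminor} with $\alpha = \epsilon$.

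With both conditions verified, Harris' theorem applied to $Q_t$ produces a unique invariant measure (which must coincide with $\mu$, as $\mu$ is known to be invariant for the SDE) together with constants $C > 0$ and $\delta \in (0,1)$ depending only on $\gamma_t, K_t, \epsilon$ such that
\begin{equation}
\|Q_t^k(\boldsymbol{x}, \cdot) - \mu\|_\TV \le C \delta^k \Phi_\theta(\boldsymbol{x})\;,
\end{equation}
which is the desired bound \eqref{e:boundTV}. There is no real obstacle in the argument: the two hypotheses of Harris are tailor-made for the Lyapunov function $\Phi_\theta$ constructed in Remark~\ref{SDEdriftcondition2} and the minorization of Lemma~\ref{SDEminorization}, and the proof amounts to an alignment of constants. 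The only mild subtlety worth flagging is that the constants $C$ and $\delta$ depend on the sampling increment $t$ (through $E_t$ and hence through $\epsilon$); one fixes $t$ once and for all at the outset of the proof.
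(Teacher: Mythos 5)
Your proof is correct and follows essentially the same route as the paper's: verify the drift condition for $Q_t$ via Remark~\ref{SDEdriftcondition2} and \eqref{e:Lyap}, verify the minorization on the relevant sublevel set via Lemma~\ref{SDEminorization}, and invoke Harris' theorem; the paper's own proof is just a three-line version of exactly this. Your closing remark that $C$ and $\delta$ in fact depend on the sampling increment $t$ (through the minorization constant $\epsilon$) is a fair observation about the theorem's phrasing, and fixing $t$ at the outset is consistent with how the result is used later (only at $t=1$).
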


\begin{proof}
According to Remark~\ref{SDEdriftcondition2}, for every $\theta \in (0, \beta-c)$, 
$\exp(\theta U)$ is a Lyapunov function for the Markov chain $Q_t$.  Moreover, by 
Lemma~\ref{SDEminorization} it satisfies a minorization condition on every sublevel 
set of $U$. Hence, Harris' theorem implies that \eref{e:boundTV} holds.
\end{proof}

Next we recall some integration strategies for \eqref{SDE1} and summarize
their properties.  In particular, we discuss to what extent these strategies preserve
the geometric rate of convergence of the true solution.

\subsection{Forward Euler}

Let the time-stepsize $h$ be given, set $t_k = h k $ for
$k \in \mathbb{N}$, and consider the following forward Euler
discretization of~\eqref{SDE1}:
\begin{equation}
    \Tilde{\boldsymbol{X}}_{k+1} =\Tilde{\boldsymbol{X}}_k 
    - h \nabla U(\Tilde{\boldsymbol{X}}_k)
    +  \sqrt{2 \beta^{-1}}  (\boldsymbol{W}(t_{k+1}) - \boldsymbol{W}(t_{k})) \;,~~~
      \Tilde{\boldsymbol{X}}_{0} = \boldsymbol{x} \in \mathbb{R}^n \;.
    \label{ULA}
\end{equation}
Here $ \Tilde{\boldsymbol{X}}_k$ should be viewed as an approximation to
$\boldsymbol{Y}_k \eqdef \boldsymbol{Y}(t_k)$.
The iteration rule~\eqref{ULA} defines a Markov chain that possesses a
transition probability with the following smooth, strictly positive 
transition density:
\begin{equation}
  q_h(\boldsymbol{x}, \boldsymbol{y}) = (4 \pi \beta^{-1} h)^{-n/2} 
  \exp\left(  -  \frac{\left| \boldsymbol{y} - \boldsymbol{x} 
        + h \nabla U(\boldsymbol{x} ) \right|^2}{ 4 \beta^{-1} h} \right)  \;.
  \label{Eulerdensity}
\end{equation}  
Hence, the chain is irreducible with respect to Lebesgue
measure.

If $\nabla U$ is globally Lipschitz and $h$ is small
enough, forward Euler~\eqref{ULA} can be shown to be
exponentially ergodic  with respect to a probability distribution
that is a first-order approximant to the equilibrium distribution of the SDE \eqref{SDE1}.
This property is typically established using a Talay-Tubaro expansion of the 
global weak error of  forward Euler \cite{TaTu1990}.

When $\nabla U$ is nonglobally Lipschitz, forward Euler 
is a transient Markov chain for any $h>0$.  In fact, all moments of forward Euler 
are unbounded on long time-intervals for any initial condition 
$\boldsymbol{x} \in \mathbb{R}^n$.  To be precise for any
integer $\ell \ge 1$ and for any $h>0$
\begin{equation}
  \E^{\boldsymbol{x}} | \Tilde{\boldsymbol{X}}_k |^{ \ell} 
  \to \infty  \quad \text{as} \quad k \to \infty \;,
\label{emunboundedmoments}
\end{equation}
where $\mathbb{E}^{\boldsymbol{x}}$ denotes the expectation 
conditional on $\Tilde{\boldsymbol{X}}_0=\boldsymbol{x}$, 
see e.g.\ \cite{MaStHi2002, Ta2002}.   This instability implies that
an equilibrium trajectory of forward Euler does not sample any 
probability distribution.  As is well known in the literature, 
a Metropolis-Hastings method can stochastically stabilize forward Euler.

\subsection{MALA Algorithm}
\label{sec:defMALA}

A Metropolis-Hastings method is a Monte-Carlo method for
producing samples from a known probability distribution \cite{MeRoRoTeTe1953,Ha1970}.    
The method generates a Markov chain from a given proposal Markov chain as follows.  
A proposal move is computed according to the proposal chain and
accepted with a probability that ensures the
Metropolized chain is ergodic with respect to the given probability
distribution.  Here we shall focus on the Metropolized forward Euler
integrator defined in terms of the equilibrium density $\pi$~\eqref{IM}
and the transition density $q_h$~\eqref{Eulerdensity}.

Given a time-stepsize $h$ 
and input state $\boldsymbol{X}_k$ the algorithm calculates a proposal move
using the forward Euler updating scheme in~\eqref{ULA}:
\begin{equation}   \label{MALAproposal}
  \boldsymbol{X}^{\star}_{k+1} = \boldsymbol{X}_k - h
  \nabla U(\boldsymbol{X}_k ) + \sqrt{2 \beta^{-1}}
  (\boldsymbol{W}(t_{k+1}) - \boldsymbol{W}(t_{k})) \;,
\end{equation}
and accepts this proposal with a probability 
\begin{equation}
  \label{MALAacceptreject}
  \alpha_h(\boldsymbol{x}, \boldsymbol{y}) =  1 \wedge 
  \frac{q_h(\boldsymbol{y},\boldsymbol{x}) \pi(\boldsymbol{y}) }
  { q_h(\boldsymbol{x},\boldsymbol{y}) \pi(\boldsymbol{x}) }  \;.
\end{equation}
In other words, if $\zeta_k \sim U(0,1)$ is an i.i.d.\ sequence of uniformly distributed random 
variables, the update is defined as:
\begin{equ}[MALA]
  \boldsymbol{X}_{k+1} =
\begin{cases}
  \boldsymbol{X}^{\star}_{k+1}\qquad &
  \text{if}~~\zeta_k<\alpha_h(\boldsymbol{X}_k,
  \boldsymbol{X}^{\star}_{k+1} )\\
  \boldsymbol{X}_k & \text{otherwise}
\end{cases} 
\end{equ}
for $k \in \mathbb{N}$.  To be consistent with the literature, we will refer 
to the Metropolized forward Euler integrator as the Metropolis-adjusted 
Langevin algorithm (MALA) \cite{RoTw1996A}.  We emphasize that MALA 
is a special case of the smart and hybrid Monte-Carlo algorithms which are older and 
more general sampling methods, see \cite{RoDoFr1978, DuKePeRo1987}.   
By construction, MALA preserves the invariant measure $\mu$ of \eqref{SDE1}.  
This implies for any $g: \mathbb{R}^n \to \mathbb{R}$,
\begin{equation} \label{Xpreservesmu}
 \E^{\mu} \left( g( \boldsymbol{X}_k ) \right) 
= \int_{\mathbb{R}^n} g(\boldsymbol{x}) \mu(d \boldsymbol{x}),
~~\forall~~ k \in \mathbb{N} \;.
\end{equation}
Here $\E^{\mu}$ denotes
expectation conditioned on the initial distribution of the integrator 
being the equilibrium distribution of the SDE \eqref{SDE1}:
\[
 \mathbb{E}^{\mu} \left( g( \boldsymbol{X}_k ) \right) = 
\int_{\mathbb{R}^n} 
\mathbb{E}^{\boldsymbol{x}} \left( g( \boldsymbol{X}_k ) \right) \, \mu(d \boldsymbol{x}),~~~
\boldsymbol{X}_0 = \boldsymbol{x} \in \mathbb{R}^n \;.
\]
Moreover, it is quite standard to show that MALA gives rise to an ergodic Markov chain.
Indeed, denoting by $P_h$ the transition probabilities defined by \eref{MALA}, one has

\begin{theorem}[Roberts and Tweedie, \cite{RoTw1996B}]
Let $U$ be a potential satisfying Assumption~\ref{sa}.
For any $h>0$ the $k$-step transition probability of MALA 
converges to $\mu$ in the total variation metric on probability measures, that is
\[
\lim_{k \to \infty} \| P_h^k(\boldsymbol{x}, \cdot) - \mu \|_{\TV} = 0\;,
\]
for all $\boldsymbol{x} \in \mathbb{R}^n$.
\label{MALAergodicity}
\end{theorem}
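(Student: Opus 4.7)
The plan is to establish that MALA is a positive Harris recurrent, aperiodic Markov chain admitting $\mu$ as its invariant measure, and then invoke a standard convergence theorem from the Meyn--Tweedie toolbox to conclude TV convergence from every initial condition.

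First I would record the invariance of $\mu$ under $P_h$. This is a general fact about Metropolis--Hastings: the acceptance probability $\alpha_h$ in \eqref{MALAacceptreject} is precisely what forces the detailed balance relation $\pi(\boldsymbol{x})\,q_h(\boldsymbol{x},\boldsymbol{y})\,\alpha_h(\boldsymbol{x},\boldsymbol{y}) = \pi(\boldsymbol{y})\,q_h(\boldsymbol{y},\boldsymbol{x})\,\alpha_h(\boldsymbol{y},\boldsymbol{x})$, and detailed balance implies $\mu P_h = \mu$. Nothing about Assumption~\ref{sa} is needed for this step beyond positivity and measurability of $\pi$ and $q_h$.

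Next I would analyse the structure of $P_h(\boldsymbol{x},\cdot)$. The kernel decomposes as an absolutely continuous part with density $p_h(\boldsymbol{x},\boldsymbol{y}) = q_h(\boldsymbol{x},\boldsymbol{y})\,\alpha_h(\boldsymbol{x},\boldsymbol{y})$ plus an atom at $\boldsymbol{x}$ carrying the total rejection probability $r(\boldsymbol{x}) = 1 - \int p_h(\boldsymbol{x},\boldsymbol{y})\,d\boldsymbol{y}$. The Gaussian form \eqref{Eulerdensity} gives $q_h(\boldsymbol{x},\boldsymbol{y})>0$ for all $\boldsymbol{x},\boldsymbol{y}\in\R^n$, and since $\pi>0$ pointwise, the ratio inside \eqref{MALAacceptreject} is strictly positive, so $\alpha_h(\boldsymbol{x},\boldsymbol{y})>0$ everywhere. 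Hence $p_h$ is a strictly positive continuous density on $\R^n\times\R^n$. This immediately yields three properties: (i) $\mu$-irreducibility (in fact Lebesgue-irreducibility), since any Borel set of positive Lebesgue measure is reached in one step with positive probability from every $\boldsymbol{x}$; (ii) aperiodicity, via either the rejection atom giving $P_h(\boldsymbol{x},\{\boldsymbol{x}\})>0$ or the existence of an absolutely continuous one-step minorization; (iii) every compact set $K\subset\R^n$ is a small set, since $\inf_{\boldsymbol{x}\in K}p_h(\boldsymbol{x},\boldsymbol{y})$ is a strictly positive continuous function of $\boldsymbol{y}$ on any compact $L$ by continuity and positivity of $p_h$, yielding a uniform minorization $P_h(\boldsymbol{x},\cdot)\ge \epsilon \nu(\cdot)$ on $K$ with $\nu$ absolutely continuous.

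Given an invariant probability measure and $\mu$-irreducibility, the chain is automatically positive recurrent; combined with the small-set minorization on every compact set it is positive Harris recurrent (any open set of positive $\mu$-measure is visited infinitely often almost surely from every starting point, by a standard argument using the strict positivity of $p_h$). At this point the Meyn--Tweedie convergence theorem for aperiodic, positive Harris chains (\cite{MeTw2009}, Theorem~13.0.1) delivers exactly the conclusion $\|P_h^k(\boldsymbol{x},\cdot)-\mu\|_{\TV}\to 0$ for every $\boldsymbol{x}\in\R^n$, with no moment or Lyapunov hypothesis required.

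The step that takes the most care is passing from $\mu$-a.e.\ convergence to everywhere convergence: one must avoid appealing to a Lyapunov function, since at this stage none has been established for MALA (that is the purpose of Section~\ref{app:localdriftcondition}, which quantifies the rate under a restriction). The clean way around this is to use the pointwise strict positivity of $p_h$ to upgrade $\mu$-irreducibility to Harris recurrence directly, rather than trying to invoke a geometric ergodicity statement. Everything else is book-keeping that transplants standard Metropolis--Hastings arguments, as in the original Roberts--Tweedie paper \cite{RoTw1996B}.
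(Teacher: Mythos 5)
The paper does not actually prove this statement: it is quoted directly from Roberts and Tweedie \cite{RoTw1996B}, so there is no internal proof to compare against. Your argument is the standard one underlying that reference and it is correct: detailed balance gives invariance of $\mu$; strict positivity and continuity of $p_h(\boldsymbol{x},\boldsymbol{y})=q_h(\boldsymbol{x},\boldsymbol{y})\alpha_h(\boldsymbol{x},\boldsymbol{y})$ give Lebesgue-irreducibility, aperiodicity, and smallness of compacts; and you correctly isolate the only delicate step, namely upgrading from $\mu$-a.e.\ to everywhere convergence via Harris recurrence rather than via a Lyapunov function (which MALA need not admit here --- that is the whole point of the paper). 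The one-step absolutely continuous component with everywhere-positive density is exactly what makes that upgrade work (this is Tierney's observation that an irreducible Metropolis--Hastings chain with invariant law $\mu$ is automatically Harris recurrent), after which the Meyn--Tweedie convergence theorem for aperiodic positive Harris chains gives the claim. No gaps.
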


 If $\nabla U$ is globally Lipschitz and $h$ is 
small enough, MALA is geometrically ergodic 
(see Theorem 4.1 of\cite{RoTw1996B}).   However,  if $\nabla U$ is nonglobally Lipschitz, 
MALA is not geometrically ergodic even though the solution to the SDE is
(see Theorem 4.2 of\cite{RoTw1996B}).  Specifically, one can prove the following.

\begin{theorem}[Roberts and Tweedie, \cite{RoTw1996B}] 
Let $U$ be a potential satisfying Assumption~\ref{sa}. 
If \begin{equation}  \label{RoTw1996B}
\liminf_{|\boldsymbol{x}| \to \infty} \frac{| \nabla U(\boldsymbol{x}) |}{| \boldsymbol{x} |} 
> \frac{2 \beta}{h}
\end{equation}
then MALA operated at time-stepsize $h$ is not geometrically ergodic.
\label{MALAnotgeoergodic}
\end{theorem}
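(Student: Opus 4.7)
The plan is to show that the expected one-step acceptance probability $r(\boldsymbol{x}) \eqdef \E^{\boldsymbol{x}}[\alpha_h(\boldsymbol{x},\boldsymbol{X}^{\star}_{1})]$ tends to zero as $|\boldsymbol{x}|\to\infty$, and then to derive a contradiction with geometric ergodicity via the Meyn--Tweedie characterization in terms of Foster--Lyapunov drifts. The two pieces are essentially independent.

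For the first piece, I would substitute \eqref{Eulerdensity} and \eqref{IM} into \eqref{MALAacceptreject}; a short algebraic manipulation yields the explicit log-ratio
\begin{equation*}
\log\frac{q_h(\boldsymbol{y},\boldsymbol{x})\pi(\boldsymbol{y})}{q_h(\boldsymbol{x},\boldsymbol{y})\pi(\boldsymbol{x})}
= -\beta\bigl(U(\boldsymbol{y})-U(\boldsymbol{x})\bigr)
+ \tfrac{\beta}{2}\bigl\langle \boldsymbol{y}-\boldsymbol{x},\, \nabla U(\boldsymbol{x}) + \nabla U(\boldsymbol{y})\bigr\rangle
-\tfrac{\beta h}{4}\bigl(|\nabla U(\boldsymbol{y})|^2 - |\nabla U(\boldsymbol{x})|^2\bigr)\;.
\end{equation*}
Hypothesis \eqref{RoTw1996B} provides $\eta>0$ such that $h|\nabla U(\boldsymbol{x})|\ge (2\beta+\eta)|\boldsymbol{x}|$ for all sufficiently large $|\boldsymbol{x}|$, so the deterministic step $\boldsymbol{x}-h\nabla U(\boldsymbol{x})$ grossly overshoots the origin and has magnitude of order $h|\nabla U(\boldsymbol{x})|\to\infty$. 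Since the Gaussian perturbation $\sqrt{2\beta^{-1}h}\,\xi$ is only of order one, with overwhelming $\xi$-probability the proposal satisfies $|\boldsymbol{y}|\gtrsim h|\nabla U(\boldsymbol{x})|$, and Assumption~\ref{sa}~(A) and (D) then force $U(\boldsymbol{y})$ and $|\nabla U(\boldsymbol{y})|^2$ to dwarf their counterparts at $\boldsymbol{x}$. Tracking exponents shows that among the three contributions on the right-hand side the last one, $-\tfrac{\beta h}{4}|\nabla U(\boldsymbol{y})|^2$, is the dominant term, so the log-ratio diverges to $-\infty$ in probability. Because $\alpha_h\le 1$, bounded convergence gives $r(\boldsymbol{x})\to 0$.

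For the second piece, suppose for contradiction that MALA were geometrically ergodic. Since $q_h$ is everywhere positive, $P_h$ is $\psi$-irreducible, and the standard converse of Harris' theorem (Theorem~15.0.1 of \cite{MeTw2009}) provides a measurable $V\colon\R^n\to[1,\infty)$, constants $\kappa\in(0,1)$ and $K\ge 0$, and a petite set $C$ (which, under ellipticity, can be taken bounded) such that $P_h V \le \kappa V + K\mathbb{1}_C$. A rejected proposal leaves the chain at $\boldsymbol{x}$, so $P_h(\boldsymbol{x},\{\boldsymbol{x}\}) \ge 1-r(\boldsymbol{x})$ and hence
\[
(P_h V)(\boldsymbol{x}) \ge \bigl(1-r(\boldsymbol{x})\bigr) V(\boldsymbol{x})\;.
\]
Choosing a sequence $\boldsymbol{x}_n\notin C$ with $|\boldsymbol{x}_n|\to\infty$, the drift inequality would force $1-r(\boldsymbol{x}_n)\le\kappa<1$, contradicting $r(\boldsymbol{x}_n)\to 0$.

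The main obstacle lies in the first piece and specifically the uniform-in-noise control. The cancellation pattern in the log-ratio is delicate, because Metropolis--Hastings is engineered precisely to annihilate the leading-order behavior of the Euler proposal, so one must identify the correct surviving negative term and verify that it dominates uniformly in $\xi$ outside a set of negligible Gaussian probability. The trivial bound $\alpha_h\le 1$ handles the bad tails cheaply, which reduces matters to a Gaussian deviation estimate for $\xi$ combined with the polynomial-in-$U$ growth bounds on $\nabla U$ from Assumption~\ref{sa}~(D).
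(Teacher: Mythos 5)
First, a point of reference: the paper does not prove this statement. Theorem~\ref{MALAnotgeoergodic} is imported verbatim from \cite{RoTw1996B} and used as a black box, so there is no internal proof to compare against. That said, your two-step strategy --- (i) the mean acceptance probability $r(\x)$ vanishes as $|\x|\to\infty$ because the Euler proposal overshoots, and (ii) vanishing acceptance probability at infinity is incompatible with geometric ergodicity --- is exactly the route of Roberts and Tweedie, and your log-ratio identity is correct: it equals $-\beta G(\x,\y)$ with $G$ as in Lemma~\ref{MALAstagnationprobability}.

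Two steps are genuine gaps rather than routine bookkeeping. In step (i), the dominance claim is the entire proof and does not follow from \eqref{RoTw1996B} as stated. The competing positive contribution $\frac{\beta}{2}\langle \y-\x,\nabla U(\y)\rangle$ has size of order $\frac{\beta h}{2}|\nabla U(\x)|\,|\nabla U(\y)|$, so beating it with $-\frac{\beta h}{4}|\nabla U(\y)|^2$ requires $|\nabla U(\y)|>2|\nabla U(\x)|$; chasing constants through $|\y|\ge h|\nabla U(\x)|-|\x|-O(\sqrt h\,|\boldsymbol{\xi}|)$ and $h|\nabla U(\y)|\ge (2\beta+\eta)|\y|$ only gives $|\nabla U(\y)|\gtrsim(2\beta+\eta-1)|\nabla U(\x)|$, which closes the argument only for $\beta$ large enough --- and for $\beta\le 1/2$ the hypothesis does not even force $|\x-h\nabla U(\x)|>|\x|$, so there is no overshoot at all. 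Similarly, the term $+\beta U(\x)$ hiding in $-\beta(U(\y)-U(\x))$ (you cannot assume $U(\y)\ge U(\x)$ from $|\y|\gg|\x|$ alone, since $U$ may be anisotropic and grow much faster than $|\x|^2$) must be absorbed into $-\frac{\beta h}{4}|\nabla U(\y)|^2$; this needs $|\nabla U|^2\gtrsim U$, which is available but only by combining Assumption~\ref{sa}~(B) with (C). In the regime the paper actually uses the theorem ($\liminf|\nabla U(\x)|/|\x|=\infty$, e.g.\ $U(x)=x^4/4$) your sketch does go through, but at the stated threshold $2\beta/h$ the ``tracking exponents'' you defer is where the proof lives and it is not clear it closes. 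In step (ii), the assertion that the petite set ``can be taken bounded under ellipticity'' is unproved; MALA has an atom at the current state, so this is not the standard $T$-chain fact. It is repairable --- for instance, an unbounded petite set $C$ is impossible once $r(\x)\to 0$, since $\sum_n a_nP_h^n(\x,\cdot)\ge\nu$ for $\x\in C$ arbitrarily far out forces the minorizing measure $\nu$ to vanish on every ball; alternatively argue via $\E^{\x}[\kappa^{-\tau_C}]<\infty$ and the fact that the holding time at $\x$ is geometric with parameter $r(\x)$ --- but as written it is a gap, and you also need to restrict the sequence $\x_n$ to the full absorbing set where $V<\infty$.
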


If \eqref{RoTw1996B} holds, the tail of the equilibrium density is no heavier than Gaussian.  
For example, if $U(x) = x^4/4$
then \[
\liminf_{|x| \to \infty} \frac{| \nabla U(x) |}{| x |}   = \infty  \;.
\]
In this case the theorem states MALA is not geometrically ergodic, in contrast
to the true solution of the SDE. The main purpose of this article is to argue that,
up to errors that are \textit{exponentially} small in the time-step size $h$,
the convergence of the transition probabilities of MALA towards equilibrium still
takes place at an exponential rate. The next section gives a precise statement of this result,
as well as an overview of its proof.


\section{Main Results}
\label{proof}

We now state and prove the main result of the paper. Throughout this section, $P_h$ will
denote the one-step transition probabilities of the MALA algorithm as defined in Section~\ref{sec:defMALA}
above. We will also use throughout this section the shorthand notation $P = P_h^{\lfloor 1/h \rfloor}$ for 
the evolution of MALA over one unit of `physical time'.


\begin{theorem}\label{theo:main}
Let $U$ be a potential function satisfying Assumption~\ref{sa}, and let $P$ be as above.
Then, there exists $\bar{\delta} \in (0,1)$
and, for every $E_0>0$, there exist positive 
constants $C_1$, $C_2$, and $h_c(E_0)$ 
such that MALA's distance  to stationarity satisfies
\[
\| P^k( \boldsymbol{x}, \cdot) - \mu \|_{\TV} \le  
C_1  \Phi(\boldsymbol{x}) \bigl(\bar{\delta}^k + e^{-C_2/ h^{1/4}}\bigr) \;,
\]
for all $k \in \mathbb{N}$, all stepsizes $h<h_c$, and all $\boldsymbol{x}$ satisfying 
$U( \boldsymbol{x}) < E_0$.
\end{theorem}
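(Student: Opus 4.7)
The plan is to compare $P$ with a patched kernel $\tilde P_h$ that agrees with $P_h$ on the sublevel set $\{U \le E\}$ and is modified outside (say by reflecting proposals back into $\{U \le E\}$), where $E = E(h)$ will be chosen. Take the Lyapunov function to be $\Phi(\x) = \exp(\theta U(\x))$ with some $\theta \in (0,\beta-c)$. Section~\ref{app:localdriftcondition} establishes the local drift condition $P_h \Phi \le (1-\gamma h)\Phi + Kh$ on $\{U \le E\}$ with $\gamma, K$ independent of $h$, and the maximal scale at which this persists turns out to be $E \sim h^{-1/4}$; this is where the $1/4$ in the final estimate enters. The patching is designed so that the local bound extends globally for $\tilde P_h$; iterating $\lfloor 1/h\rfloor$ times then yields $\tilde P \Phi \le \bar\gamma \Phi + \bar K$ with $\bar\gamma \in (0,1)$ and $\bar K$ independent of $h$. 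I would combine this with an associated minorization of $\tilde P$ on sublevel sets of $\Phi$, obtained from the SDE minorization (Lemma~\ref{SDEminorization}) together with the TV-accuracy (Lemma~\ref{MALAphiaccuracy}) used to control $\|\tilde P(\x,\cdot) - Q_1(\x,\cdot)\|_\TV$ on sublevel sets of $U$. Harris' theorem (Theorem~\ref{theo:Harris}) then furnishes a unique invariant $\tilde\mu$ and the geometric bound $\|\tilde P^k(\x,\cdot) - \tilde\mu\|_\TV \le C \bar\delta^k \Phi(\x)$ with constants independent of $h$.

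Next I would control $\tilde\mu$ versus $\mu$, and $P^k$ versus $\tilde P^k$. For the first comparison, the crucial input is the exact preservation of $\mu$ by $P$: the signed measure $\mu - \mu\tilde P = \mu(P - \tilde P)$ is concentrated on $\{U > E\}$ and has $\Phi$-weighted total variation at most $C \int_{\{U > E\}}\Phi\,d\mu \le C' e^{-(\beta-\theta)E/2}$ by Remark~\ref{rem:boundTail}. Propagating this perturbation through the Harris contraction yields $\|\mu - \tilde\mu\|_\TV \le C'' e^{-c E}$. For the second comparison I would couple $P_h$ and $\tilde P_h$ synchronously (same Gaussian increments, same acceptance uniforms), so that the two chains agree exactly until the first time $\tau$ that the common trajectory visits $\{U > E\}$. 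A supermartingale estimate based on the local drift condition gives $\P^\x(\tau < \infty) \le C\Phi(\x) e^{-\theta E}$ uniformly in $k$, and hence $\|P^k(\x,\cdot) - \tilde P^k(\x,\cdot)\|_\TV \le C' \Phi(\x) e^{-\theta E}$.

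Putting the three pieces together by the triangle inequality gives
\[
\|P^k(\x,\cdot) - \mu\|_\TV \le C_1 \Phi(\x)\bigl(\bar\delta^k + e^{-c E}\bigr)\;,
\]
and choosing $E = C_2 h^{-1/4}/c$ yields the theorem, with the hypothesis $U(\x) < E_0$ ensuring both that $\Phi(\x)$ is controlled by $e^{\theta E_0}$ and that $h_c(E_0)$ can be chosen so $E(h) > E_0$. The main obstacle will be establishing the two deferred ingredients: the quantitative TV-accuracy of Lemma~\ref{MALAphiaccuracy} (which must handle the nonglobally Lipschitz drift and keep explicit track of how the error depends on $U(\x)$, the strategy being to first obtain a Wasserstein-type estimate and then use the smoothing of one-step Euler transitions to upgrade it to total variation), and the local drift condition of Section~\ref{app:localdriftcondition} (where the sharpness of the scale $E \sim h^{-1/4}$ is dictated by the failure of $\Phi$ to be preserved by the forward-Euler proposal at higher energies). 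Once these are in hand, the remainder is a largely standard Harris-type assembly.
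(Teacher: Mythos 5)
Your overall architecture is the same as the paper's: a patched chain confined to $\{U<E_h\}$ with $E_h\sim h^{-1/4}$, a Harris-theorem argument for the patched chain built from the local drift condition and the SDE minorization transferred via TV-accuracy, and a three-term triangle inequality. (Two cosmetic differences: the paper builds the patched chain as an \emph{exact} Metropolis algorithm targeting $\mu$ conditioned on $R_h$, so its invariant measure is $\mu(\cdot\cap R_h)/\mu(R_h)$ in closed form and the $\|\bar\mu-\mu\|_{\TV}$ term is computed directly from Remark~\ref{rem:boundTail} rather than by your perturbation-of-invariant-measures argument; and the TV-accuracy input to the minorization is Lemma~\ref{patchedMALAtvaccuracy}, not Lemma~\ref{MALAphiaccuracy}, which concerns the one-step error in $\E\Phi$.)

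There is, however, one genuine gap: your claim that the synchronous coupling gives $\P^{\x}(\tau<\infty)\le C\Phi(\x)e^{-\theta E}$ \emph{uniformly in $k$} is false. MALA is ergodic with respect to $\mu$, and $\mu(\{U>E_h\})>0$, so the chain visits $\{U>E_h\}$ infinitely often almost surely and $\P^{\x}(\tau<\infty)=1$ for every $\x$. No supermartingale argument can circumvent this; the drift condition only yields $\P^{\x}(\tau\le n)\le C\,n\,\Phi(\x)e^{-\theta E_h}$ (a union bound over the $n$ steps, each controlled by Chebyshev applied to $\E\Phi(\boldsymbol{X}_k)$), so the coupling bound on $\|P^k-\bar P^k\|_{\TV}$ necessarily carries a factor of $k$ and becomes trivial for $k\gtrsim e^{\theta E_h}$. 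The paper closes this gap with an extra step you are missing: since $k\mapsto\|P^k(\x,\cdot)-\mu\|_{\TV}$ is nonincreasing, it suffices to establish the bound at $k=\lceil h^{-1/4}\rceil$, where $\bar\delta^{k}$ is already exponentially small in $h^{-1/4}$ and the factor $k\,e^{-\theta E_h}$ is still exponentially small; for larger $k$ one simply uses monotonicity, and for smaller $k$ the factor $k$ is harmless. Without this (or an equivalent device) your assembly of the three pieces does not yield a bound of the stated form for all $k\in\mathbb{N}$.
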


To quantify MALA's distance to stationarity, $\| P^k(\boldsymbol{x}, \cdot ) - \mu \|_{\TV} $, 
we adopt a patching argument.  The point of the 
patching argument is to use compactness to boost a local property of MALA to a 
global property.  The main ingredient of this argument is a version of MALA with reflection
on the boundaries of certain compact sets.


To introduce this patched version of MALA, set  
$R_h = \{ \boldsymbol{x} : U(\boldsymbol{x}) < E_h \}$,
where  $E_h = E_\star h^{-1/4}$ for a constant $E_\star$ yet to be determined. The
`patched MALA' algorithm  is then defined as a Metropolized version of forward Euler with
a reflecting boundary condition at the boundary of $R_h$.   This boundary condition is 
enforced by setting the target distribution in MALA to be the equilibrium 
distribution $\mu$ conditional on being in $R_h$.   This distribution possesses the following density with respect to 
Lebesgue measure: 
\begin{equation}
\bar{\pi}( \boldsymbol{x} ) = Z_h^{-1}   e^{-\beta U(\boldsymbol{x})} \mathbf{1}_{R_h}(\boldsymbol{x})  \;,
\end{equation}
where $Z_h = \int_{R_h} \exp(-\beta U(\boldsymbol{x}))d\boldsymbol{x}$ and $\mathbf{1}_{R_h}$ is the
indicator function for the set $R_h \in \mathbb{R}^n$.

To be more precise, given a time-stepsize $h$ 
and input state $\bar{\boldsymbol{X}}_k \in R_h$, the algorithm calculates a proposal move
using the forward Euler updating scheme in~\eqref{ULA}:
\begin{equation}   \label{patchedMALAproposal}
  \bar{\boldsymbol{X}}^{\star}_{k+1} = \bar{\boldsymbol{X}}_k - h
  \nabla U(\bar{\boldsymbol{X}}_k ) + \sqrt{2 \beta^{-1}}
  (\boldsymbol{W}(t_{k+1}) - \boldsymbol{W}(t_{k})) \;,
\end{equation}
and accepts this proposal with a probability 
\begin{equation}
  \label{patchedMALAacceptreject}
 \bar{\alpha}_h(\boldsymbol{x}, \boldsymbol{y}) =  \begin{cases}
 1 \wedge 
  \frac{q_h(\boldsymbol{y},\boldsymbol{x}) \pi(\boldsymbol{y}) }
  { q_h(\boldsymbol{x},\boldsymbol{y}) \pi(\boldsymbol{x}) }  & \text{if}~~\boldsymbol{x} \in R_h \\
  0 & \text{otherwise} \;.
  \end{cases}
\end{equation}
In other words, if $\zeta_k \sim U(0,1)$ is an i.i.d.\ sequence of uniformly distributed random 
variables, the update is defined as:
\begin{equ}[Patched MALA]
  \bar{\boldsymbol{X}}_{k+1} =
\begin{cases}
  \bar{\boldsymbol{X}}^{\star}_{k+1}\qquad &
  \text{if}~~\zeta_k<\bar{\alpha}_h(\bar{\boldsymbol{X}}_k,
  \bar{\boldsymbol{X}}^{\star}_{k+1} )\\
  \bar{\boldsymbol{X}}_k & \text{otherwise}
\end{cases} 
\end{equ}
for $k \in \mathbb{N}$.  
We stress that patched MALA always remains in $R_h$
since it rejects all moves to $R_h^c$.  
Let $\bar{P}_h$ denote the transition probability of patched MALA.
Let $\bar{\mu}$ denote the invariant measure of $\bar{P}_h$ with density 
$\bar{\pi}$.   The invariant measures of $\bar{P}_h$ and $P_h$ 
are related by: 
\begin{equ}[e:defmubar]
\bar{\mu}(A) = \frac{ \mu(A \cap R_h) }{\mu(R_h)}
\end{equ}
for all measureable sets $A$.
Set $\bar{P} = \bar{P}_h^{\lfloor 1/h \rfloor}$.  With this notation we are
ready to prove Theorem~\ref{theo:main}.

\begin{proof}[Proof of Main Result]  

This proof relies on Lemmas~\ref{1term} and \ref{2term} provided below.
Using the triangle inequality, we bound the distance of $P^k$ to stationarity by 
\begin{equs}
 \| P^k(\boldsymbol{x}, \cdot) - \mu \|_{\TV}  \nonumber &\le \| P^k(\boldsymbol{x}, \cdot)- \bar{P}^k(\boldsymbol{x}, \cdot) \|_{\TV}  
+ \| \bar{P}^k(\boldsymbol{x}, \cdot) - \bar{\mu} \|_{\TV}  
+ \| \bar{\mu} - \mu \|_{\TV}  \\
& \eqdef I_1 + I_2 + I_3 \;.
\label{patching}
\end{equs}
We now bound all three terms separately.


Lemma~\ref{1term} bounds $I_1$ in \eqref{patching} using a coupling between MALA and 
patched MALA, and the coupling characterization of the total variation distance.
The lemma states for every $E_0>0$ there exist positive constants
$\tilde{C}_1$ and $h_c$ such that
\begin{equation} \label{1termbound}
I_1 = \| P^k(\boldsymbol{x}, \cdot)- \bar{P}^k(\boldsymbol{x}, \cdot) \|_{\TV}  \le     
\tilde{C}_1 \Phi(\boldsymbol{x}) e^{-\beta E_h} k   \;,
\end{equation}
for all $h<h_c$ and every $\boldsymbol{x}$ satisfying $U( \boldsymbol{x}) < E_0$.


Lemma~\ref{2term} bounds $I_2$ in \eqref{patching} 
by using Harris' theorem, Theorem~\ref{theo:Harris}. This lemma relies on a drift and minorization condition 
for patched MALA.  The lemma states that patched MALA is exponentially 
ergodic, that is, for every $\bar{\delta} \in (\delta,1)$ and $E_0>0$, 
there exist positive constants $C_3$ and $h_c$ such that
\begin{equation} \label{2termbound}
I_2 = \| \bar{P}^k(\boldsymbol{x}, \cdot) - \mu \|_{\TV}  \le C_3  \Phi(\boldsymbol{x}) \bar{\delta}^k \;,
\end{equation}
for all $h<h_c$ and for all $\boldsymbol{x}$ satisfying $U( \boldsymbol{x}) < E_0$.


To bound $I_3$, we use the characterisation of $\bar \mu$ in \eref{e:defmubar} and the definition of the total variation
distance, to get
\begin{equ}[3termbound]
\| \bar{\mu} - \mu \|_{\TV} =  2 \mu(R_h^c) \le 2 e^{-\beta E_h/2}\;,
\end{equ}
where we used Remark~\ref{rem:boundTail} to obtain the inequality.


Combining the bounds \eqref{1termbound}, \eqref{2termbound} and \eqref{3termbound} yields
 \begin{equation} \label{kdependent}
\| P^k(\boldsymbol{x}, \cdot) - \mu \|_{\TV}  \le 
\tilde{C}_1  \Phi(\boldsymbol{x})  e^{-\beta E_h} k  + C_3  \Phi(\boldsymbol{x})   \bar{\delta}^k + 2 e^{-\beta E_h/2} \; .
\end{equation}
Since the total variation distance between a Markov chain
and its invariant measure is nonincreasing in the TV norm,
the linear dependence on $k$ can be eliminated as follows.
Set $k= \lceil h^{-1/4} \rceil$ in \eqref{kdependent} to obtain: \[
\tilde{C}_1  \Phi(\boldsymbol{x}) e^{- \beta E_h} \lceil h^{-1/4} \rceil  
+ C_3  \Phi(\boldsymbol{x})  e^{ \ln(\bar{\delta}) \lceil h^{-1/4} \rceil } + 2e^{-\beta E_h/2} \; .
\]
Since $E_h \propto h^{-1/4}$, there exist positive constants $C_1$ and $C_2$ such that
\[
\| P^k(\boldsymbol{x}, \cdot) - \mu \|_{\TV}  \le 
C_1 \Phi(\boldsymbol{x}) (  e^{-C_2/ h^{1/4}}  +    \bar  \delta^k )   \; .
\]
for all $k \in \mathbb{N}$ and every $\boldsymbol{x}$ satisfying $U( \boldsymbol{x}) < E_0$.  
This observation concludes the proof.
\end{proof}   

The next lemma bounds $I_1$ in \eqref{patching} using
the drift condition obtained in Lemma~\ref{patchedMALAdrift}.

\begin{lemma}
Provided that $E_\star$ is sufficiently small there exist positive
constants $C_1$, $C_2$ and $h_c$ such that 
\[
\sup_{t \in [0, T]} \| P_h^{\lfloor t/h \rfloor}(\boldsymbol{x}, \cdot)- \bar{P}_h^{\lfloor t/h \rfloor}(\boldsymbol{x}, \cdot) \|_{\TV} 
\le C_1 \Phi(\boldsymbol{x}) e^{-C_2/ h^{1/4}} (1+T)
 \]
 for all $\boldsymbol{x} \in R_h$, every $h<h_c$, and every $T>0$.
\label{1term}
\end{lemma}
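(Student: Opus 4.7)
The plan is to compare $P_h$ and $\bar P_h$ through a synchronous coupling and to show that under this coupling the two chains agree for all times up to $\lfloor T/h\rfloor$, except on a rare event whose probability is of order $\Phi(\x)(1+T)e^{-C_2/h^{1/4}}$. Concretely, I would construct $(\boldsymbol{X}_k,\bar{\boldsymbol{X}}_k)$ jointly with $\boldsymbol{X}_0=\bar{\boldsymbol{X}}_0=\x$ by driving both chains with the same Brownian increments for the proposals and the same uniform variables $\zeta_k$ for the accept/reject step. As long as $\boldsymbol{X}_j=\bar{\boldsymbol{X}}_j\in R_h$, both chains generate identical proposals; if the proposal again lies in $R_h$, the two acceptance probabilities agree and the chains make the same move. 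They can separate at step $k+1$ only when the proposal $\boldsymbol{X}^\star_{k+1}$ lies in $R_h^c$ \emph{and} MALA accepts it (patched MALA must reject). Denoting by $\tau$ this first separation time, the coupling inequality gives
\[
\|P_h^k(\x,\cdot)-\bar P_h^k(\x,\cdot)\|_\TV \le 2\,\P_\x[\tau\le k],
\]
which is monotone in $k$, so the supremum over $t\in[0,T]$ reduces to the single estimate with $k=\lfloor T/h\rfloor$.

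The key one-step ingredient is a bound on the escape probability $p_{\mathrm{out}}(\y):=\int_{R_h^c} q_h(\y,\boldsymbol{z})\alpha_h(\y,\boldsymbol{z})\,d\boldsymbol{z}$ for $\y\in R_h$. Using the detailed-balance identity $\pi(\y)q_h(\y,\boldsymbol{z})\alpha_h(\y,\boldsymbol{z})=\pi(\boldsymbol{z})q_h(\boldsymbol{z},\y)\alpha_h(\boldsymbol{z},\y)$, the trivial bound $\alpha_h\le 1$, and $U(\boldsymbol{z})\ge E_h$ on $R_h^c$, one obtains
\[
p_{\mathrm{out}}(\y)\le e^{\beta(U(\y)-E_h)}\int_{R_h^c}q_h(\boldsymbol{z},\y)\,d\boldsymbol{z}.
\]
Assumption~\ref{sa} together with the smallness of $E_\star$ then controls the remaining integral by a constant uniformly in $h$, yielding $p_{\mathrm{out}}(\y)\le Ce^{-\beta(E_h-U(\y))}$. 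Combining this with the drift condition $\bar P_h\Phi\le\gamma\Phi+K$ from Lemma~\ref{patchedMALAdrift}, which gives $\E_\x[\Phi(\bar{\boldsymbol{X}}_j)]\le \Phi(\x)+K/(1-\gamma)$ for every $j$, and observing that $\{\tau>j\}\subset\{\bar{\boldsymbol{X}}_j=\boldsymbol{X}_j\}$, a union bound produces
\[
\P_\x[\tau\le k]\le\sum_{j=0}^{k-1}\E_\x[p_{\mathrm{out}}(\bar{\boldsymbol{X}}_j)].
\]
Splitting each expectation over $\{U(\bar{\boldsymbol{X}}_j)\le E_h/2\}$ and its complement yields two exponentially small contributions: $Ce^{-\beta E_h/2}$ and, via Markov's inequality on $\Phi=e^{\theta U}$, $C\Phi(\x)e^{-\theta E_h/2}$. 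Summing over $j\le \lfloor T/h\rfloor$ and using $E_h=E_\star h^{-1/4}$, the polynomial factor $1/h$ is absorbed by slightly decreasing the exponential constant, producing the bound claimed.

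The delicate step is the one-step escape estimate on $p_{\mathrm{out}}$: since $q_h(\cdot,\y)$ is not a probability density in its first argument, bounding $\int_{R_h^c} q_h(\boldsymbol{z},\y)\,d\boldsymbol{z}$ by a constant requires the change of variables $\boldsymbol{z}\mapsto \y-\boldsymbol{z}+h\nabla U(\boldsymbol{z})$, whose Jacobian $\det(I-hD^2U(\boldsymbol{z}))$ may degenerate at high energies (where Assumption~\ref{sa}(D) allows $\|D^2U\|$ to grow like $U$). Confining the Gaussian mass to a region where this change of variables is well-conditioned is precisely the reason $E_\star$ must be chosen small enough to guarantee $h\|D^2U(\boldsymbol{z})\|\lesssim hE_h=E_\star h^{3/4}\ll 1$ on the set that carries nearly all the Gaussian mass.
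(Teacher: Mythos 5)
Your overall skeleton---synchronous coupling of the two chains, reduction of the total variation distance to $2\,\P^{\x}[\tau\le k]$, monotonicity in $k$, and control of high-energy excursions by Markov's inequality applied to $\Phi=e^{\theta U}$ together with the drift condition---is exactly the paper's. The genuine gap is in the one-step escape estimate, which in your decomposition carries the entire burden on the event $\{U(\bar{\boldsymbol{X}}_j)\le E_h/2\}$. After the detailed-balance step you must bound $\int_{R_h^c}q_h(\boldsymbol{z},\y)\,d\boldsymbol{z}$, an integral over the \emph{unbounded} high-energy region $\{U\ge E_h\}$, and your justification is backwards for that region: on $R_h^c$ one only knows $U(\boldsymbol{z})\ge E_h$, and Assumption~\ref{sa}~(D) allows $\|D^2U(\boldsymbol{z})\|$ to be as large as $CU(\boldsymbol{z})$ with $U(\boldsymbol{z})$ unbounded (it may grow exponentially), so $h\|D^2U(\boldsymbol{z})\|$ is \emph{not} small where you integrate and the Jacobian $\det(I-hD^2U(\boldsymbol{z}))$ of your change of variables can degenerate there. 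Nothing in your argument rules out the map $\boldsymbol{z}\mapsto\boldsymbol{z}-h\nabla U(\boldsymbol{z})$ folding large high-energy sets back onto a neighbourhood of $\y$, and the pointwise bound $q_h(\boldsymbol{z},\y)\le Ch^{-n/2}$ is useless on a set of infinite measure. So the key claim $\int_{R_h^c}q_h(\boldsymbol{z},\y)\,d\boldsymbol{z}\le C$ is unproven, and the reason you give for it is incorrect.

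The detour is also unnecessary: the paper bounds $\{\tau\le n\}$ by a union bound over the events $\{\boldsymbol{X}_k\in R_h^c\}=\{\Phi(\boldsymbol{X}_k)>e^{\theta E_h}\}$ and applies Chebyshev's inequality directly, $\P^{\x}(\tau\le n)\le e^{-\theta E_h}\sum_{k\le n}\E^{\x}\Phi(\boldsymbol{X}_k)$, with no conditioning on the position one step earlier and hence no need for an escape-probability estimate. The drift condition of Proposition~\ref{MALAlocaldrift} (applicable because $E_h<h^{-1/2}$; the smallness of $E_\star$ enters here, to beat the error term $CU^4(\x)h^2\le CE_\star^4 h$ against the decay $e^{-\gamma h}$, not in conditioning any Jacobian) yields $\E^{\x}\Phi(\boldsymbol{X}_k)\le\Phi(\x)+K'$, and summing $\lfloor T/h\rfloor$ terms gives $C\Phi(\x)(1+T)h^{-1}e^{-\theta E_\star h^{-1/4}}$, after which $h^{-1}$ is absorbed by slightly decreasing $C_2$. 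This is precisely the half of your argument that you reserved for the event $\{U(\bar{\boldsymbol{X}}_j)>E_h/2\}$; applied to the events $\{\boldsymbol{X}_k\in R_h^c\}$ themselves, it suffices on its own.
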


\begin{proof}
The measures $P_h(\boldsymbol{x}, \cdot)$   and $\bar{P}_h(\boldsymbol{x}, \cdot)$ 
are \textit{not} the same, even for a point $\boldsymbol{x} \in R_h$, since their invariant distributions
are different.  In particular, patched MALA rejects all proposed moves to $R_h^c$.  However, if the input state and proposed 
move are in $R_h$, the acceptance probabilities of the two chains are the same.  Hence, if we initiate the 
two chains in $R_h$, and drive them by the same realization of noise, we obtain a coupling 
between the two chains such that they are identical up until the first time MALA hits $R_h^c$.  
Based on this observation, we obtain a bound on the total variation difference 
between the transition probabilities  of the two chains in the following way.

Let $\{\boldsymbol{X}_k\}$ and $\{\bar{\boldsymbol{X}}_k\}$ be instances of the Markov chains with 
respective transition probabilities $P_h$ and $\bar{P}_h$, driven by
the same realization of the noise $\boldsymbol{W}$, the same realisation of the acceptance variables $\zeta_k$,
 and with identical initial conditions
$\boldsymbol{X}_0 = \bar{\boldsymbol{X}}_0 =  \boldsymbol{x} \in R_h$. As argued above, we then
have $\boldsymbol{X}_k = \bar{\boldsymbol{X}}_k$ for $k\le n$ provided that the first time 
MALA hits $R_h^c$ is greater than $n$.  Let $\tau_h$ denote the first time that $\boldsymbol{X}_k$
hits $R_h^c$.  The coupling characterization of the total variation distance implies that, \begin{equs}
\| P_h^n(\boldsymbol{x}, \cdot)- \bar{P}_h^n(\boldsymbol{x}, \cdot) \|_{\TV} 
\le 2\P^{\boldsymbol{x}}(\boldsymbol{X}_n \ne \bar{\boldsymbol{X}}_n)  
\le 2\P^{\boldsymbol{x}}( \tau_h \le n)  \;.
\end{equs}

At this stage, one of our main ingredients is the fact that the function
$\Phi(\x) = \exp \bigl(\theta U(\x)\bigr)$ is a Lyapunov function for the MALA algorithm, see 
Proposition~\ref{MALAlocaldrift} below. The probability of MALA first hitting $R_h^c$ before time $n$  
can therefore be expressed as
\begin{equ}
 \P^{\boldsymbol{x}}  ( \tau_h \le n ) 
 = \sum_{k=1}^n \P^x\bigl( \Phi(\boldsymbol{X}_k) > e^{\theta E_h}\bigr)
 \le e^{-\theta E_h } \sum_{k=1}^n  \E^x \Phi(\boldsymbol{X}_k)
\end{equ}
where we made use of Chebychev's inequality.
We now note that we can apply
 Proposition~\ref{MALAlocaldrift} since $E_h < h^{-1/2}$ for $h$ sufficiently small. Since $E_h = E_\star h^{-1/4}$, 
 we can make $E_\star$ sufficiently small so that there exists some $\bar \gamma > 0$ such that
\begin{equ}
\E^x \Phi(\boldsymbol{X}_1)\le e^{-\bar \gamma h} \Phi(\x) + Kh \;.
\end{equ}
Combining this with the previous bound, we obtain 
\begin{equ}
 \P^{\boldsymbol{x}}  ( \tau_h \le n ) \le e^{-\theta E_h }  \sum_{k=1}^n \bigl( e^{-\bar \gamma kh}\Phi(\x) + {Kh \over 1-e^{-\bar \gamma h}}\bigr) \;.
\end{equ}
 
Summing over $k$ and using the fact that $E_h \propto h^{-1/4}$ yields the existence of positive constants $C_1$ and $C_2$ such 
that \begin{equation} \label{firsthittingtimebnd}
\P^{\boldsymbol{x}}( \tau_h \le n) 
\le    C_1 \Phi(\boldsymbol{x}) e^{- C_2/h^{1/4}} \bigl( 1 + T \bigr) \;,
\end{equation}
which is indeed the desired result.
\end{proof}

The following lemma proves a geometric rate of convergence for the Markov chain
$\bar{P}$.  Recall $R_h = \{  \boldsymbol{x} : U(\boldsymbol{x}) < E_h \}$.  
The key tool used is Harris' theorem, Theorem~\ref{theo:Harris}.

\begin{lemma}
 For every $\bar{\delta} \in (\delta, 1)$, there exist positive constants 
$C$ and $h_c$ such that
 \[
\| \bar{P}^k(\boldsymbol{x}, \cdot) - \mu \|_{\TV}  \le C \Phi(\boldsymbol{x}) \bar{\delta}^k
\]
for all $\boldsymbol{x} \in R_h$ and $h<h_c$.   
In particular, $\bar\delta$ is independent of time-stepsize.
\label{2term}
\end{lemma}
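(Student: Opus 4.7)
The plan is to apply Harris' theorem (Theorem~\ref{theo:Harris}) to the Markov chain $\bar P$, verifying that both the drift and the minorization condition hold with constants \emph{independent} of $h$ provided $h$ is sufficiently small. Strictly speaking the natural invariant measure of $\bar P$ is $\bar\mu$ rather than $\mu$, but the small discrepancy $\|\bar\mu - \mu\|_\TV \le 2e^{-\beta E_h/2}$ observed in \eref{3termbound} is harmless and can be absorbed into the constants, so it suffices to establish $\|\bar P^k(\x, \cdot) - \bar\mu\|_\TV \le C \Phi(\x) \bar\delta^k$ for $\x \in R_h$.

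First I would establish the drift condition for $\bar P$. Proposition~\ref{MALAlocaldrift} yields a one-step bound of the form $P_h \Phi(\x) \le e^{-\bar\gamma h} \Phi(\x) + K h$ on a large set containing $R_h$, since $E_h = E_\star h^{-1/4} < h^{-1/2}$ for $h$ small. The patched chain $\bar P_h$ and $P_h$ coincide on all transitions that stay in $R_h$ and differ only in that $\bar P_h$ rejects moves to $R_h^c$. Since $\Phi(\y) > e^{\theta E_h} \ge \Phi(\x)$ whenever $\y \in R_h^c$ and $\x \in R_h$, these extra rejections can only reduce the expected value of $\Phi$, giving $\bar P_h \Phi \le P_h \Phi$ on $R_h$. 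Iterating $\lfloor 1/h \rfloor$ times, which is legitimate because $\bar P_h$ preserves $R_h$, yields
\[
\bar P \Phi(\x) \le \gamma_\star \Phi(\x) + K_\star\;, \qquad \x \in R_h\;,
\]
with $\gamma_\star \to e^{-\bar\gamma}$ and $K_\star \to K/\bar\gamma$ as $h \to 0$; in particular $\gamma_\star < 1$ and $K_\star < \infty$ uniformly for $h < h_c$.

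Next I would verify the minorization on the compact sublevel set $S = \{\Phi \le 4K_\star/(1-\gamma_\star)\}$, which is independent of $h$ and contained in $R_h$ for $h$ small. Here I invoke the total-variation accuracy of patched MALA proved in Section~\ref{app:MALAtvaccuracy} (Lemma~\ref{MALAphiaccuracy}): for every $\x \in S$,
\[
\|\bar P(\x, \cdot) - Q_1(\x, \cdot)\|_\TV \le \varepsilon(h)\;,
\]
with $\varepsilon(h) \to 0$ as $h \to 0$. Combining this with Lemma~\ref{SDEminorization} applied at $t = 1$ and $E = \sup_{\x \in S} U(\x)$ via the triangle inequality yields
\[
\|\bar P(\x, \cdot) - \bar P(\y, \cdot)\|_\TV \le 2\varepsilon(h) + 2(1-\epsilon) \le 2(1-\alpha)
\]
for all $\x, \y \in S$, some $\alpha > 0$, and every sufficiently small $h$.

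With both hypotheses of Harris' theorem verified for $\bar P$ with constants $\gamma_\star$, $K_\star$, and $\alpha$ that are bounded uniformly in $h < h_c$, Theorem~\ref{theo:Harris} supplies the bound $\|\bar P^k(\x, \cdot) - \bar\mu\|_\TV \le C\Phi(\x)\rho^k$ with $\rho$ and $C$ depending only on those constants. As $h \to 0$ the Harris constants for $\bar P$ converge to the corresponding Harris constants for $Q_1$, whose rate is $\delta$ by Theorem~\ref{SDEgeometricallyergodic}; this explains why every $\bar\delta \in (\delta, 1)$ is admissible once $h$ is small enough and why the strict inequality $\bar\delta > \delta$ must be imposed. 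The principal technical obstacle is the uniform total-variation estimate on $S$ for $\bar P(\x,\cdot) - Q_1(\x,\cdot)$: in the presence of a non-globally Lipschitz drift this is far from automatic, and confining the chain to $R_h$ is precisely what keeps the forward Euler transition densities comparable to Gaussians over a full unit of physical time, which is what makes the analysis of Lemma~\ref{MALAphiaccuracy} feasible.
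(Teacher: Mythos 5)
Your proposal follows the same overall route as the paper: Lemma~\ref{2term} is proved by applying Harris' theorem (Theorem~\ref{theo:Harris}) to $\bar P$, with the minorization supplied by combining the total-variation accuracy of patched MALA with Lemma~\ref{SDEminorization} (this is exactly the paper's Lemma~\ref{patchedMALAminorization}) and the drift supplied by a one-step Lyapunov estimate on $R_h$ (the paper's Lemma~\ref{patchedMALAdrift}). The one place where you genuinely diverge is the drift verification. The paper conditions on whether the Euler proposal lands in $R_h$ or not, and then has to work to show that $\P^{\x}(\bar{\boldsymbol{X}}_1^\star \in R_h)$ is bounded below by something close to $\tfrac12$, which requires a separate geometric argument about the level sets of $U$. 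You instead observe that $\bar P_h \Phi \le P_h \Phi$ pointwise on $R_h$, because the only transitions the patched chain suppresses are accepted moves to $\y \in R_h^c$, where $\Phi(\y) \ge e^{\theta E_h} \ge \Phi(\x)$; so the suppression can only decrease $\E\Phi$. That monotonicity argument is correct and noticeably cleaner than the paper's, and it still needs $E_\star$ small so that the $C U^4(\x) h^2 \le C E_\star^4 h$ error term in Proposition~\ref{MALAlocaldrift} is dominated by $\gamma h$ --- which you have. Two small points: the accuracy result you need for the minorization is Lemma~\ref{patchedMALAtvaccuracy}, not Lemma~\ref{MALAphiaccuracy} (the latter is the $\Phi$-accuracy estimate, a different statement); and the discrepancy $\|\bar\mu-\mu\|_\TV \le 2e^{-\beta E_h/2}$ is additive and does not decay in $k$, so it cannot literally be absorbed into a $C\Phi(\x)\bar\delta^k$ bound --- but Harris' theorem naturally yields the statement with $\bar\mu$, which is what is actually used as $I_2$ in the proof of Theorem~\ref{theo:main}, and the $\bar\mu$ versus $\mu$ mismatch in the lemma as stated appears to be a typo in the paper rather than a defect of your argument.
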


\begin{proof}
To prove this result, we use once again Harris' theorem.  
The verification of its conditions for the Markov chain $\bar{P}$ is precisely the content of 
Lemmas~\ref{patchedMALAminorization}  and~\ref{patchedMALAdrift} below. 
\end{proof}

In the next lemma, a minorization condition for patched MALA is derived
using finite time accuracy of patched MALA in the TV norm 
(see Lemma~\ref{patchedMALAtvaccuracy}).

\begin{lemma} \label{patchedMALAminorization}
Let $U$ be a potential function satisfying Assumption~\ref{sa}.
Let $\epsilon$ be the constant appearing in the minorization condition
of the true solution (see Lemma~\ref{SDEminorization}), and let
 $\bar{P}$ be as above. 
For every $E>0$ and $\bar{\epsilon} \in (0, \epsilon)$, there exists a positive
constant $h_c$ such that 
\begin{equ}[e:bounddiffP]
\| \bar{P}(\boldsymbol{x}, \cdot) - \bar{P}(\boldsymbol{y}, \cdot)\|_{\TV} \le 2(1-\bar{\epsilon}) \;,
\end{equ}
for all $\boldsymbol{x},\boldsymbol{y}$ satisfying
$U(\boldsymbol{x}) \vee U(\boldsymbol{y})\le E$ and $h \le h_c$.
\end{lemma}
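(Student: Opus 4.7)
The plan is to reduce the minorization for $\bar{P}$ to the minorization already proved for the exact SDE transition $Q_t$ in Lemma~\ref{SDEminorization}, using the finite-time TV accuracy of patched MALA (Lemma~\ref{patchedMALAtvaccuracy}) to absorb the discretisation error. Writing $T_h = h \lfloor 1/h \rfloor$, so that $T_h \to 1$ as $h \to 0$ and $\bar{P}$ is designed to approximate $Q_{T_h}$, I would insert $Q_{T_h}(\x,\cdot)$ and $Q_{T_h}(\y,\cdot)$ between $\bar{P}(\x,\cdot)$ and $\bar{P}(\y,\cdot)$ and apply the triangle inequality to split $\|\bar{P}(\x,\cdot) - \bar{P}(\y,\cdot)\|_{\TV}$ into three pieces.

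For the central piece $\|Q_{T_h}(\x,\cdot) - Q_{T_h}(\y,\cdot)\|_{\TV}$, Lemma~\ref{SDEminorization} applied at time $T_h$ on the sublevel set $\{U(\x)\vee U(\y) \le E\}$ supplies the bound $2(1-\epsilon)$, where $\epsilon$ is the constant from that lemma. Since the bounds in the proof of Lemma~\ref{SDEminorization} come from compactness plus the strict positivity of $q(t,\cdot,\cdot)$, the same $\epsilon$ can be used uniformly for all $T_h$ in a small neighbourhood of $1$, hence for all $h < h_c$. For each of the two outer pieces I would invoke Lemma~\ref{patchedMALAtvaccuracy}, which provides an estimate of the form $C\,\eta(h)\,\Phi(\x)$ with $\eta(h) \to 0$ as $h \to 0$, and similarly for $\y$. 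On the sublevel set $\{U \le E\}$ one has $\Phi \le e^{\theta E}$, so each outer term is bounded by $C e^{\theta E} \eta(h)$, a quantity which can be made smaller than $\epsilon - \bar{\epsilon}$ by shrinking $h_c$.

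Combining the three bounds yields $2(1-\epsilon) + 2(\epsilon - \bar{\epsilon}) = 2(1-\bar{\epsilon})$, which is exactly the required estimate. The genuine work lies not here but inside Lemma~\ref{patchedMALAtvaccuracy}: one must produce a total-variation accuracy estimate for patched MALA that vanishes as $h \to 0$ and is controlled by the Lyapunov function $\Phi$, uniformly over the starting point in a sublevel set. That is the step one expects to be the main obstacle, since it requires upgrading a weaker Wasserstein-type error bound to a TV bound by exploiting the Gaussian regularising effect of the forward Euler proposals, and is handled in Section~\ref{app:MALAtvaccuracy}. Once that approximation result is in hand, the present lemma reduces to the triangle-inequality argument above.
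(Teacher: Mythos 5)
Your proposal is correct and follows essentially the same route as the paper: insert the exact transition kernel via the triangle inequality, apply the SDE minorization of Lemma~\ref{SDEminorization} on the sublevel set, and absorb the two outer terms using the TV accuracy bound of Lemma~\ref{patchedMALAtvaccuracy} by shrinking $h_c$. If anything, you are slightly more careful than the paper's own two-line proof in tracking that $\bar P$ approximates $Q_{T_h}$ with $T_h = h\lfloor 1/h\rfloor$ rather than $Q_1$ exactly, and in noting that the minorization constant can be chosen uniformly for $T_h$ near $1$.
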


\begin{proof}
According to Lemma~\ref{SDEminorization}, the bound \eref{e:bounddiffP} holds when $\bar{P}$ is 
replaced by $Q_1$, the transition probability for the true solution $\boldsymbol{Y}$ at time 
one. Combining this with Lemma~\ref{patchedMALAtvaccuracy} below, we thus obtain
\begin{equs}
\| \bar{P}(\boldsymbol{x}, \cdot) - \bar{P}(\boldsymbol{y}, \cdot)\|_{\TV} &\le 
2(1-\epsilon) 
+ 2\sup_{\Phi(\boldsymbol{x}) \le E} \| \bar{P}(\boldsymbol{x}, \cdot) - Q_1(\boldsymbol{x}, \cdot)\|_{\TV} \\
&\le 2(1-\epsilon) +  C(E) \sqrt{h}  \;.
\end{equs}
Choosing $h$ sufficiently small so 
that $C(E) \sqrt{h} < 2 ( \epsilon - \bar{\epsilon} )$, the claim follows.
\end{proof}

In the next lemma, we derive a drift condition for patched MALA using 
its single-step accuracy in representing the Lyapunov function $\Phi$.  
Deriving this drift condition requires a generalization of Theorem 7.2 
in \cite{MaStHi2002} to Lyapunov functions that are neither globally 
Lipschitz nor essentially quadratic.

\begin{lemma}\label{patchedMALAdrift}
Let $U$ be a potential function satisfying Assumption~\ref{sa} and
let $\gamma$ be the constant appearing in the drift condition \eref{e:LyapPhi}. 
For every $\bar{\gamma} \in (0, \gamma/2)$, there exist positive
constants $E_\star$ and $h_c$ such that 
  \begin{equation}
\E^{\boldsymbol{x}} \left( \Phi(\bar{\boldsymbol{X}}_{\lfloor 1/h \rfloor}) \right) \le 
e^{- \bar{\gamma}} \Phi(\boldsymbol{x})  + K\;, 
\label{MALAlocaldriftcondition}
\end{equation}
for all $\boldsymbol{x} \in R_h$ and all $h<h_c$.
\end{lemma}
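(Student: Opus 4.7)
The plan is to reduce the one-unit-of-time drift for patched MALA to a single-step estimate that can be iterated, and to obtain that single-step estimate by comparison with the \emph{unpatched} chain, for which Proposition~\ref{MALAlocaldrift} already supplies a drift bound. Since $E_h = E_\star h^{-1/4}$, the inclusion $R_h \subset \{U < h^{-1/2}\}$ holds once $h$ is small, which places every point of $R_h$ inside the region where Proposition~\ref{MALAlocaldrift} applies.

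First I would establish the single-step drift
\[
\E^{\x}\bigl[\Phi(\bar{\boldsymbol{X}}_1)\bigr] \le e^{-\tilde\gamma h}\Phi(\x) + \tilde K h
\]
for $\x \in R_h$, with a constant $\tilde\gamma$ that can be made arbitrarily close to $\gamma/2$ by shrinking $E_\star$ and $h_c$. Proposition~\ref{MALAlocaldrift} gives precisely this bound for the unpatched chain $\boldsymbol{X}_1$. To transfer it to patched MALA, I would couple the two chains through a common Euler proposal $\boldsymbol{X}^{\star}_1$ from \eqref{MALAproposal} and compare the acceptance rules \eqref{MALAacceptreject} and \eqref{patchedMALAacceptreject}: the two one-step expectations of $\Phi$ disagree only on the event $\{\boldsymbol{X}^{\star}_1 \notin R_h\}$, where MALA may accept with probability $\alpha_h(\x,\boldsymbol{X}^\star_1)$ but patched MALA always rejects. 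On that event $\Phi(\boldsymbol{X}^{\star}_1) \ge e^{\theta E_h} > \Phi(\x)$, because $\x \in R_h$, so every replacement of $\Phi(\boldsymbol{X}^\star_1)$ by $\Phi(\x)$ can only decrease the expectation. This monotone comparison yields $\E^{\x}\Phi(\bar{\boldsymbol{X}}_1) \le \E^{\x}\Phi(\boldsymbol{X}_1)$ and passes the single-step drift from MALA to patched MALA unchanged.

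The second step is iteration. Since patched MALA never leaves $R_h$ by construction, the one-step bound applies at every $\bar{\boldsymbol{X}}_k$, and with $n = \lfloor 1/h\rfloor$ the tower property and a geometric sum give
\[
\E^{\x}\bigl[\Phi(\bar{\boldsymbol{X}}_n)\bigr] \le e^{-\tilde\gamma nh}\Phi(\x) + \tilde K h \sum_{k=0}^{n-1} e^{-\tilde\gamma kh} \le e^{-\tilde\gamma(1-h)}\Phi(\x) + \frac{\tilde K h}{1 - e^{-\tilde\gamma h}}\;,
\]
using $nh \ge 1-h$ in the first summand. The second summand is bounded uniformly in $h$ by some constant $K > 0$; for any target $\bar\gamma < \gamma/2$, choosing $\tilde\gamma \in (\bar\gamma,\gamma/2)$ and then $h$ sufficiently small so that $\tilde\gamma(1-h) \ge \bar\gamma$ yields the claimed bound \eqref{MALAlocaldriftcondition}.

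The main obstacle is the single-step estimate inside Proposition~\ref{MALAlocaldrift}, which is the true technical heart of the argument: the Lyapunov function $\Phi(\x) = \exp(\theta U(\x))$ is non-integrable against the Gaussian Euler kernel, so the continuous-time inequality \eref{e:LyapPhi} cannot be propagated to the one-step MALA operator by any naive moment computation, and this is precisely what forces the truncation scale $U < h^{-1/2}$. Within the present lemma, however, the only genuinely new ingredient is the monotone coupling observation above; once that is in hand, the reduction to the unpatched single-step drift and the subsequent Markov iteration are routine.
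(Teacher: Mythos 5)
Your proof is correct, but at the decisive step it takes a genuinely different — and in fact cleaner — route than the paper. Both arguments reduce the lemma to the single-step drift of Proposition~\ref{MALAlocaldrift} on the region $\{U < h^{-1/2}\} \supset R_h$ and then iterate using the fact that patched MALA never leaves $R_h$; the difference is in how the drift is transferred from $\boldsymbol{X}_1$ to $\bar{\boldsymbol{X}}_1$. The paper splits $\E^{\x}\Phi(\bar{\boldsymbol{X}}_1)$ according to whether the proposal lands in $R_h$, bounds the conditional expectation on that event by $\E^{\x}\Phi(\boldsymbol{X}_1)$, and is then left needing a \emph{lower} bound on $\P^{\x}\bigl(\bar{\boldsymbol{X}}_1^\star \in R_h\bigr)$ that is uniform over $\x \in R_h$ and close to $1/2$; this is obtained by a separate geometric argument (showing that nearly half of the unit directions at any $\x \in R_h$ point into $R_h$ for a distance $E_h^{-1}$, using Assumption~\ref{sa}~(C)--(D)), and it is precisely this factor of $1/2$ that limits the statement to $\bar\gamma \in (0,\gamma/2)$. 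Your monotone coupling observation — that under the common-proposal, common-$\zeta$ coupling one has the \emph{pointwise} inequality $\Phi(\bar{\boldsymbol{X}}_1) \le \Phi(\boldsymbol{X}_1)$, because the two chains can only differ when $\boldsymbol{X}_1^\star \in R_h^c$, where $\Phi(\boldsymbol{X}_1^\star) \ge e^{\theta E_h} > \Phi(\x)$ — gives $\E^{\x}\Phi(\bar{\boldsymbol{X}}_1) \le \E^{\x}\Phi(\boldsymbol{X}_1)$ directly and dispenses with the geometric lower bound altogether. What this buys is both a shorter proof and a sharper conclusion: your single-step factor is $e^{-\gamma h} + CE_\star^4 h \le e^{-\tilde\gamma h}$ with $\tilde\gamma$ arbitrarily close to $\gamma$ (not merely $\gamma/2$) for $E_\star$ small, so you in fact prove the lemma for every $\bar\gamma \in (0,\gamma)$; you understate this in your write-up, but the bound you derive delivers it. The iteration step and the uniform bound on $\tilde K h/(1-e^{-\tilde\gamma h})$ are exactly as in the paper.
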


\begin{proof}
We will actually show that
\begin{equ}
\E^{\boldsymbol{x}} \left( \Phi(\bar{\boldsymbol{X}}_{1}) \right) \le 
(1 - \bar{\gamma} h) \Phi(\boldsymbol{x})  + Kh\;, 
\end{equ}
from which the required bound follows by induction, noting that $U(\bar{\boldsymbol{X}}_{k}) \le E_h$
for every $k>0$ by construction.

We decompose the expression that we want to bound as
\begin{equ}
\E^{\boldsymbol{x}} \left( \Phi( \bar{\boldsymbol{X}}_1) \right) =  \E^{\boldsymbol{x}} \left( \Phi( \bar{\boldsymbol{X}}_1) \,,\; \bar{\boldsymbol{X}}_1^{\star} \in R_h \right) + 
\Phi(\boldsymbol{x}) \P^{\boldsymbol{x}} \left( \bar{\boldsymbol{X}}_1^{\star} \in R_h^c \right) \;.
\end{equ}
Since \[
\E^{\boldsymbol{x}} \left( \Phi( \bar{\boldsymbol{X}}_1) ~|~ \bar{\boldsymbol{X}}_1^{\star} \in R_h \right) =
\E^{\boldsymbol{x}} \left( \Phi( \boldsymbol{X}_1) ~|~ \boldsymbol{X}_1^{\star} \in R_h \right) \le  \E^{\boldsymbol{x}} \left( \Phi( \boldsymbol{X}_1)  \right)   \;,
\]
it follows that \begin{align*}
& \E^{\boldsymbol{x}} \left( \Phi( \bar{\boldsymbol{X}}_1) \right) \le  
 \E^{\boldsymbol{x}} \left( \Phi( \boldsymbol{X}_1)  \right)  \P^{\boldsymbol{x}} \left( \bar{\boldsymbol{X}}_1^{\star} \in R_h \right) + 
\Phi( \boldsymbol{x} )  \P^{\boldsymbol{x}} \left( \bar{\boldsymbol{X}}_1^{\star} \in R_h^c \right) \;.
\end{align*}
 Since $E_h < h^{-1/2}$ for $h$ sufficiently small, we can apply Proposition~\ref{MALAlocaldrift}
to the first term in this expression, thus obtaining
\begin{align*}
\E^{\boldsymbol{x}} \left( \Phi( \bar{\boldsymbol{X}}_1) \right) \le  
\left( 1 +  \P^{\boldsymbol{x}} \left( \bar{\boldsymbol{X}}_1^{\star} \in R_h \right)  (e^{-\gamma h} - 1 + C E_\star^4 h) \right) \Phi(\boldsymbol{x}) 
+ Kh \;.
\end{align*}
By making $E_\star$ sufficiently small, the requested bound now follows, provided that we can find a 
lower bound on $ \P^{\boldsymbol{x}} \left( \bar{\boldsymbol{X}}_1^{\star} \in R_h \right)$ that is arbitrarily close to ${1\over 2}$
for small values of $h$.

Recall that we have the identity
\begin{align*}
 \P^{\boldsymbol{x}} \left( \bar{\boldsymbol{X}}_1^{\star} \in R_h \right)  &=
 (4 \pi \beta^{-1} h)^{-n/2}  \int_{R_h}   
  \exp\left(  -  \frac{\left| \boldsymbol{y} - \boldsymbol{x} 
        + h \nabla U(\boldsymbol{x} ) \right|^2}{ 4 \beta^{-1} h} \right)  d\boldsymbol{y}  \;.
\end{align*}
Using $(a+b)^2 \le 2 a^2 + 2 b^2$ and Assumption~\ref{sa}~(D), it follows that we can bound this by
\begin{equs}
 \P^{\boldsymbol{x}} \left( \bar{\boldsymbol{X}}_1^{\star} \in R_h \right) &\ge
   (4 \pi \beta^{-1} h)^{-n/2}
  \exp\left(- \beta \frac{h E_h^2}{2} \right)  
 \int_{R_h}  \exp\left(  - \beta \frac{\left| \boldsymbol{y} - \boldsymbol{x}  \right|^2}{ 2 h} \right) 
  d\boldsymbol{y} \\
 &=  2^{-n/2} \exp\left(- \beta \frac{h^{1/2} E_\star^2}{2} \right) \P \bigl(\boldsymbol{\xi} + \x \in R_h\bigr)\;, \label{e:prefact}
 \end{equs}
where $\boldsymbol{\xi}$ denotes a Gaussian random variable with distribution $\CN(0,\beta^{-1}h)$.
In order to bound this term, denote by $\boldsymbol{n}(\x)$ the unit vector opposite the direction of the 
gradient of $U$ at $\x$, i.e.\ $\boldsymbol{n}(\x) = -\nabla U(\x) / |\nabla U(\x)|$. We claim that
for every $\delta > 0$, there exists $C>0$ and $E_0>0$ such that for every unit vector $\boldsymbol{m}$
with $\scal{\boldsymbol{m},\boldsymbol{n}(\x)} \ge \delta$, we have $U(\x + \kappa \boldsymbol{m}) \le U(\x)$,
provided that $\kappa \le C U(\x)^{-1/2}$ and $U(\x) \ge E_0$.

Indeed, consider the function $f(\kappa) =  U(\x + \kappa \boldsymbol{m}) - U(\x)$. Then $f$ is a smooth function
such that $f(0) = 0$ and $f'(0) \le - \delta |\nabla U(\x)| \le - C_1 \delta \sqrt{U(\x)}$ for some $C_1$ by Assumption~\ref{sa}.
Furthermore, one has $f''(\kappa) \le C_2 U(\x)$ for some $C_2$, as long as $f(\kappa) \le 0$.
Combining these, we see that $f'(\kappa) < 0$ (and therefore $f(\kappa) < 0$) for every $\kappa < \delta C_1 / (C_2 \sqrt{U(\x)})$,
as claimed.

For every $\x \in R_h$, we now define a set $A(\x) \subset S^{n-1}$ by $A(\x) = \{\boldsymbol{m}\,:\, \x + \kappa \boldsymbol{m} \in R_h\,\forall \kappa \le E_h^{-1}\}$.
As a consequence of our previous claim, for any $\alpha < {1\over 2}$ there exists $h_c$ such that if $h < h_c$, 
one has $\inf_{\x \in R_h} |A(\x)| / |S^{n-1}| \ge \alpha$, where $|\cdot|$ denotes the surface measure on the sphere.
Denoting by $\boldsymbol{B}(\boldsymbol{x}, r)$ the ball of radius $r$ centered 
 at $\boldsymbol{x}$, we conclude that
\begin{equs}
\P \bigl(\boldsymbol{\xi} + \x \in R_h\bigr) \ge \P \bigl(\boldsymbol{\xi} + \x \in R_h\cap \boldsymbol{B}(\boldsymbol{x}, E_h^{-1}) \bigr)
\ge \alpha \P \bigl(|\boldsymbol{\xi}| \le  h^{1/4} E_\star^{-1} \bigr)\;,
\end{equs}
where we used Assumption~\ref{sa}~(E) to obtain the last inequality.
By making $h$ sufficiently small, this expression can be made arbitrarily close to $\alpha$, and the prefactor in \eref{e:prefact}
can be made arbitrarily close to $1$, thus yielding
the required bound. 
\end{proof}



\section{Accuracy of the Patched MALA Algorithm}
\label{app:MALAtvaccuracy}

When all of the derivatives of $U$ are bounded, accuracy in the total variation distance
for forward Euler has been derived using a Talay-Tubaro expansion 
and Malliavin integration by parts \cite{BaTa1995};  
see also \cite{TaTu1990}.  In this section we treat the situation where the 
derivatives of $U$ are unbounded.   The order of accuracy
obtained below is not sharp, but the proof is constructive and is sufficient 
for MALA to inherit a minorization condition from the true solution.  
To sharpen the estimate, retrace the steps of the proof in \cite{BaTa1995} and replace 
boundedness of the coefficients by some coercivity.

\begin{lemma}\label{patchedMALAtvaccuracy}
Let $U$ be a potential satisfying Assumption~\ref{sa}.
Let $\bar{P}_h$ and $Q_h$ denote the transition probability of 
patched MALA and the true solution, respectively.
Then, for every $T>0$, there exists $C(T)>0$ 
such that for all  $h<1$, the bound
\begin{equation} \label{mainresult}
\| \bar{P}_h^{\lfloor t/h \rfloor} (\boldsymbol{x}, \cdot) - Q_{h}^{\lfloor t/h \rfloor}(\boldsymbol{x}, \cdot)   \|_{\TV} 
\le C(T)  \sqrt{h} U^3(\x)\;,
\end{equation}
is valid for all $\boldsymbol{x} \in \mathbb{R}^n$ and all $t \in [0,T]$.  
\end{lemma}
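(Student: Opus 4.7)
The plan is to follow the two-step strategy flagged in the introduction: first compare $\bar P_h$ and $Q_h$ in a weak (Wasserstein-type) sense, and then upgrade that comparison to total variation by absorbing one extra step of the Euler Gaussian noise kernel. The target rate $\sqrt h$ comes out naturally from this bookkeeping: each step produces a weak error of order $h^2$; convolution with a Gaussian of variance $2\beta^{-1}h$ costs a factor $h^{-1/2}$ when passing from Wasserstein to total variation; summing over $\lfloor T/h\rfloor$ steps therefore yields $O(\sqrt h)$.

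Concretely, I would first use the telescoping identity
\[
\bar P_h^n - Q_h^n = \sum_{k=0}^{n-1} \bar P_h^{\,k}\bigl(\bar P_h - Q_h\bigr)Q_h^{n-1-k}\;,
\]
and estimate each summand separately. For each one-step difference $(\bar P_h - Q_h)(\x,\cdot)$, I would decompose it into: (a) the Euler-versus-SDE weak error, controlled by Lemma~\ref{MALAphiaccuracy} with its prefactor $U^{4}(\x)$; (b) the Metropolis correction, bounded by the expected rejection probability, itself of order $h^{3/2}$ times a polynomial in $U$ by virtue of the same second-order accuracy; and (c) the patching correction, controlled by the Gaussian tail probability that the proposal leaves $R_h$, which is superpolynomially small in $h$ by a Chebyshev argument against $\Phi$ analogous to the one used in the proof of Lemma~\ref{1term}. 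To promote the resulting weighted Wasserstein-type bound to total variation, I would peel off one extra copy of $Q_h$ and invoke the elementary estimate that, for any signed measure $\nu$ on $\R^n$ of total mass zero, one has $\|\nu * g_{h}\|_{\TV}\le C h^{-1/2} W_{1}(\nu)$, where $g_h$ denotes the centred Gaussian of variance $2\beta^{-1}h I$ appearing in \eqref{Eulerdensity}; this follows immediately from the Lipschitz character of $g_h$. Backward propagation by $Q_{h}^{\,n-1-k}$ would then be carried out in a suitable $U$-weighted Wasserstein metric using the one-sided Lipschitz bound of Remark~\ref{oslremark}, which yields a contraction constant of order $e^{CT}$, while the accompanying moments of $U^m$ along the $Q_h$-trajectory are controlled uniformly on $[0,T]$ by Lemma~\ref{lem:driftGen} applied to $\Theta(u)=u^{m}$.

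The main obstacle will be matching the exponent on $U$ announced in \eqref{mainresult}: a naive count produces $U^{4}$ from Lemma~\ref{MALAphiaccuracy}, whereas the statement calls for $U^{3}$. This one-power gain has to come from a careful choice of weighting in the Wasserstein metric --- working against Lipschitz test functions of linear-in-$U$ growth rather than bounded Lipschitz ones --- so that the Gaussian smoothing step trades one power of $U$ for the $h^{-1/2}$ loss, via the standard interpolation that sees $\sqrt h\,|\nabla \varphi|$ as the dimensionless gradient. A secondary issue is verifying that the patching contribution is genuinely negligible on each step; since the noise has variance $2\beta^{-1}h$ whereas the patching threshold is $E_h=E_\star h^{-1/4}$, this is a routine Gaussian tail estimate combined with Assumption~\ref{sa}~(A) and (D) and the Lyapunov function $\Phi$. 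Putting everything together and summing the telescope then produces \eqref{mainresult}.
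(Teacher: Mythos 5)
Your overall philosophy --- bound the Euler/SDE discrepancy in a weak (Wasserstein) sense and then upgrade to total variation by paying a factor $h^{-1/2}$ through the Gaussian smoothing of one Euler step --- is exactly the paper's, and your treatment of the Metropolis and patching corrections (summing expected rejection probabilities of order $h^{3/2}U^3$ per step, which are already total-variation--type errors and need no smoothing) matches Lemma~\ref{patchedMALAtoEuler}. The genuine gap is in the quantitative accounting of part (a). The one-step error of forward Euler against the true solution in the Wasserstein-$1$ metric is $O(h^{3/2})$, not $O(h^2)$: the two one-step marginals are Gaussians (to leading order) with means agreeing to $O(h^2)$ but covariances differing by $O(h^2)$ against a scale $\sqrt{h}$, which already forces $W_1 \sim h^2/\sqrt h = h^{3/2}$; the $O(h^2)$ rate you quote is the weak error against $\CC^2$ test functions, not against Lipschitz ones, and Lemma~\ref{MALAphiaccuracy} (which concerns the single observable $\Phi$, not the transition kernels) cannot supply it. With the correct $h^{3/2}$ per step, your scheme --- telescope, then smooth \emph{each} term through a single copy of the Euler kernel at cost $h^{-1/2}$ --- yields $\sum_{k<n} h^{3/2}\cdot h^{-1/2} = nh = T$, i.e.\ an $O(1)$ bound, not $O(\sqrt h)$. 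To rescue the telescoping route you would need gradient estimates for $Q_t$ over \emph{macroscopic} times $t=(n-1-k)h$, so that the $k$-th term is smoothed at cost $((n-k)h)^{-1/2}$ and the sum becomes $O(\sqrt{Th})$; for a non--globally-Lipschitz drift this is a substantially harder ingredient than the elementary convolution estimate you invoke, and it is not what you propose.

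The paper avoids this by paying the $h^{-1/2}$ smoothing cost exactly \emph{once}. It writes
$\tilde{P}_h^{n} - Q_h^{n} = \bigl(\tilde{P}_h^{n} - \tilde{P}_h \circ Q_h^{n-1}\bigr) + \bigl(\tilde{P}_h \circ Q_h^{n-1} - Q_h^{n}\bigr)$:
the first piece compares the time-$(n-1)$ marginals of Euler and of the SDE in the bounded metric $|\cdot|\wedge 1$, where the error has already been \emph{accumulated} over the whole horizon to $O(hU^2)$ by a Gronwall argument using the one-sided Lipschitz bound (Lemma~\ref{EulerStrongAccuracy}), and then converts this to total variation through the single Lipschitz estimate of Lemma~\ref{EulerLocalLipschitz}, giving $O(\sqrt h\,U^2)$; the second piece is a genuine one-step TV comparison handled by Girsanov (Lemma~\ref{EulerWeak}), of order $O(hU^2)$. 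Your secondary worry about reconciling $U^4$ with $U^3$ is a red herring: in the paper the $U^3$ comes from the rejection-probability sum and the Euler-versus-SDE part only contributes $U^2$; no power of $U$ needs to be ``traded'' in the smoothing step.
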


\begin{proof}
This estimate is a consequence of Lemmas~\ref{Eulertvaccuracy} 
and \ref{patchedMALAtoEuler} below.  Let  $\tilde{P}_h$ denote the transition probability of 
forward Euler \eqref{ULA}.  The triangle inequality implies that, \begin{align*}
& \| \bar{P}_h^{\lfloor t/h \rfloor}(\boldsymbol{x}, \cdot)- Q_h^{\lfloor t/h \rfloor}(\boldsymbol{x}, \cdot)   \|_{\TV}
\le \\
& \qquad \| \bar{P}_h^{\lfloor t/h \rfloor}(\boldsymbol{x}, \cdot)- \tilde{P}_h^{\lfloor t/h \rfloor}(\boldsymbol{x}, \cdot)   \|_{\TV} + 
\| \tilde{P}_h^{\lfloor t/h \rfloor}(\boldsymbol{x}, \cdot)- Q_h^{\lfloor t/h \rfloor}(\boldsymbol{x}, \cdot)   \|_{\TV} \;.
\end{align*}
According to Lemma~\ref{patchedMALAtoEuler}, the first term is bounded by $C(T)  \sqrt{h} U^3(\x)$.  
According to Lemma~\ref{Eulertvaccuracy}, the second term is bounded by 
$C(T) \sqrt{h} U^2(\x)$.   Hence, the desired error estimate is obtained.
\end{proof}

\begin{lemma}  \label{Eulertvaccuracy}
Let $U$ be a potential satisfying Assumption~\ref{sa}.
Let $\tilde{P}_h$ and $Q_h$ denote the transition probability of 
forward Euler and the true solution, respectively.
Then, for every $T>0$, there exists $C(T)>0$ 
such that for all  $h<1$ 
\[
\| \tilde{P}_h^{\lfloor t/h \rfloor}(\boldsymbol{x}, \cdot)- Q_h^{\lfloor t/h \rfloor}(\boldsymbol{x}, \cdot)   \|_{\TV}
\le C(T) \sqrt{h} U^2(\x)\;,
\]
for all $\boldsymbol{x} \in \mathbb{R}^n$ and all $t \in [0,T]$.  
\end{lemma}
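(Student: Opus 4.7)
My plan follows the two-step strategy outlined in the introduction: derive a strong pathwise $L^2$ error bound via coupling, then upgrade to total variation using the Gaussian smoothing of one Euler step. The key observation driving the $\sqrt h$ rate is that, because the noise is additive and is shared in the coupling, forward Euler actually achieves a Wasserstein error of order $h$ (not just $\sqrt h$); one then pays a factor $1/\sqrt h$ to convert Wasserstein into total variation via the explicit Gaussian kernel $q_h(\y,\cdot)$ from \eqref{Eulerdensity}, which has covariance $2h\beta^{-1}I$.

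For the pathwise estimate I would couple $\tX_k$ and $\boldsymbol Y(t_k)$ using the same Brownian motion $\W$ and initial datum $\x$, and introduce the continuous-time interpolant $\tilde{\boldsymbol Y}_t = \tX_{\lfloor t/h\rfloor} - (t - t_{\lfloor t/h\rfloor})\nabla U(\tX_{\lfloor t/h\rfloor}) + \sqrt{2\beta^{-1}}(\W_t - \W_{t_{\lfloor t/h\rfloor}})$, which satisfies $\tilde{\boldsymbol Y}_{t_k} = \tX_k$. The error $\boldsymbol e_t = \tilde{\boldsymbol Y}_t - \boldsymbol Y_t$ is absolutely continuous (the Brownian contributions cancel) with $\dot{\boldsymbol e}_t = \nabla U(\boldsymbol Y_t) - \nabla U(\tilde{\boldsymbol Y}_{\lfloor t/h\rfloor h})$. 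Splitting this into the principal term $\nabla U(\boldsymbol Y_t) - \nabla U(\tilde{\boldsymbol Y}_t)$, controlled by the one-sided Lipschitz inequality of Remark~\ref{oslremark} (equivalent to Assumption~\ref{sa}~(C)), and the local discretisation term $\nabla U(\tilde{\boldsymbol Y}_t) - \nabla U(\tilde{\boldsymbol Y}_{\lfloor t/h\rfloor h})$, handled by Young's inequality combined with $\|D^2U\|\le CU$ from Assumption~\ref{sa}~(D) and the explicit increment $|\tilde{\boldsymbol Y}_t - \tX_{\lfloor t/h\rfloor}|\lesssim h|\nabla U(\tX_{\lfloor t/h\rfloor})| + \sqrt h|\boldsymbol \xi|$, a Gr\"onwall argument should yield $\E|\tX_k - \boldsymbol Y(t_k)|^2 \le C(T)h^2 U^p(\x)$ for some $p$, uniformly in $t_k \le T$.

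To convert this to total variation I would decompose
\[
\tilde P_h^k(\x,\cdot) - Q_h^k(\x,\cdot) = \bigl[\tilde P_h\tilde P_h^{k-1}(\x,\cdot) - \tilde P_h Q_h^{k-1}(\x,\cdot)\bigr] + \bigl[\tilde P_h Q_h^{k-1}(\x,\cdot) - Q_h^k(\x,\cdot)\bigr]\;,
\]
and bound each bracket separately. For the first bracket, use the sharp Gaussian TV estimate $\|\CN(m_1,\sigma^2 I) - \CN(m_2,\sigma^2 I)\|_{\TV} \le C|m_1 - m_2|/\sigma$ pointwise in the optimal $W_2$ coupling between $\tilde P_h^{k-1}(\x,\cdot)$ and $Q_h^{k-1}(\x,\cdot)$, combined with the local Lipschitz control $|\nabla U(\boldsymbol u) - \nabla U(\boldsymbol v)|\lesssim U\,|\boldsymbol u - \boldsymbol v|$ from Assumption~\ref{sa}~(D); this gives a bound of order $W_2/\sqrt h \lesssim h/\sqrt h = \sqrt h$ times moments of $U$ controlled by Lemma~\ref{lem:driftGen}. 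For the second bracket, the one-step TV error $\|\tilde P_h(\y,\cdot) - Q_h(\y,\cdot)\|_{\TV}$ is bounded by a Girsanov/Pinsker argument on $[0,h]$ applied to the drifts $-\nabla U(\y)$ and $-\nabla U(\boldsymbol Y_s)$, yielding $Ch\,U(\y)$ per step, and then integrated against $Q_h^{k-1}(\x,\cdot)$ using Lemma~\ref{lem:driftGen} to produce the $U^2(\x)$ factor.

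The main obstacle is the transience of forward Euler, which prevents unconditional moment bounds on $\tX_k$. I would circumvent this by restricting the analysis to a high-probability event $\{\tau_M > t/h\}$ with $\tau_M = \inf\{k : U(\tX_k)\ge M\}$, choosing $M$ large enough in terms of $U(\x)$ so that $\P(\tau_M \le t/h)$ is negligible at the $\sqrt h$ scale; moments of $\tX_k$ on this event can then be transferred from moments of $\boldsymbol Y(t_k)$ through the coupling, which are themselves bounded by Lemma~\ref{lem:driftGen}. The resulting $\sqrt h$ rate is acknowledged by the authors to be suboptimal; sharpening it to $h$ would require retracing the Malliavin-calculus proof of \cite{BaTa1995} with coercivity replacing global boundedness of the coefficients.
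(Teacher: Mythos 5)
Your proposal matches the paper's proof essentially step for step: the same last-step decomposition $[\tilde P_h\tilde P_h^{k-1}-\tilde P_h Q_h^{k-1}]+[\tilde P_h Q_h^{k-1}-Q_h^{k}]$, the same order-$h$ strong error via synchronous coupling, the one-sided Lipschitz condition and Gr\"onwall (Lemma~\ref{EulerStrongAccuracy}), the same Gaussian one-step TV regularisation at cost $1/\sqrt h$ (Lemma~\ref{EulerLocalLipschitz}), and the same one-step Girsanov bound for the final mismatched step (Lemma~\ref{EulerWeak}). The only real difference is how you evade Euler's transience: the paper works throughout with the truncated metric $|\cdot|\wedge 1$, so that no moments of the Euler chain are ever needed, whereas your stopping-time localisation is a slightly heavier but equivalent device that still ultimately rests on transferring moments from the true solution through the coupling.
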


\begin{proof}
We bound the TV distance between forward Euler and the true solution 
using Lemmas~\ref{EulerStrongAccuracy},~\ref{EulerLocalLipschitz}, and~\ref{EulerWeak} as follows.
Using the triangle inequality, we split the quantity that we wish to bound as
\begin{equs}
\| \tilde{P}_h^{\lfloor t/h \rfloor}(\boldsymbol{x}, \cdot)- Q_h^{\lfloor t/h \rfloor}(\boldsymbol{x}, \cdot)   \|_{\TV} &\le 
 \| \tilde{P}_h^{\lfloor t/h \rfloor}(\boldsymbol{x}, \cdot)- (\tilde{P}_h \circ Q_h^{\lfloor t/h \rfloor - 1})(\boldsymbol{x}, \cdot)   \|_{\TV} \\
 &\qquad +
\| (\tilde{P}_h \circ Q_h^{\lfloor t/h \rfloor - 1})(\boldsymbol{x}, \cdot) - Q_h^{\lfloor t/h \rfloor}(\boldsymbol{x}, \cdot)   \|_{\TV}\\
&\eqdef I_1 + I_2\;.
\label{Eulertvbound}
\end{equs}
We can rewrite the first term of \eqref{Eulertvbound} as
\[
I_1 =
\E^{\boldsymbol{x}} \| \tilde{P}_h(\tilde{\boldsymbol{X}}_{\lfloor t/h \rfloor - 1}, \cdot) - 
\tilde{P}_h(\boldsymbol{Y}_{\lfloor t/h \rfloor -1}, \cdot) \|_{\TV} \;,
\]
which, using Lemma~\ref{EulerLocalLipschitz},  is bounded by 
\begin{align*}
& I_1 \le \frac{1}{\sqrt{2 \beta^{-1} h}}
 \E^{\boldsymbol{x}} 
 \big( | \tilde{\boldsymbol{X}}_{\lfloor t/h \rfloor - 1} - \boldsymbol{Y}_{\lfloor t/h \rfloor -1} | \wedge 1 \big)  \\
&\qquad + \sqrt{\frac{h}{2 \beta^{-1}}}
 \E^{\boldsymbol{x}} 
 \big( | \nabla U(\tilde{\boldsymbol{X}}_{\lfloor t/h \rfloor - 1}) - \nabla U(\boldsymbol{Y}_{\lfloor t/h \rfloor -1}) | \wedge 1 \big)  \;.
\end{align*}
Strong accuracy of forward Euler in a bounded metric (see Lemma~\ref{EulerStrongAccuracy}) then yields 
\begin{align*}
I_1 \le C \sqrt{h} U^2(\x)\;.
\end{align*}

The second term  of \eqref{Eulertvbound} is bounded by 
\begin{equ}
I_2 \le
\E^{\boldsymbol{x}} \big( \| \tilde{P}_h(\boldsymbol{Y}_{\lfloor t/h \rfloor - 1}, \cdot) - 
Q_h(\boldsymbol{Y}_{\lfloor t/h \rfloor -1}, \cdot) \|_{\TV} \big) \;.
\end{equ}
From Lemma~\ref{EulerWeak} and \eref{e:LyapPhi}, it follows that $I_2$ is bounded by $C h U^2(\x)$,
and the claim follows.
\end{proof}

Even though forward Euler is numerically unstable for drifts that are not globally
Lipschitz,  one can prove the following `strong accuracy' for forward Euler 
in a bounded metric.  As the proof shows, boundedness of the metric plays the 
role of stability of the numerical scheme.

\begin{lemma} \label{EulerStrongAccuracy}
Let $U$ be a potential satisfying Assumption~\ref{sa}.
Let $\tilde{\boldsymbol{X}}$ and $\boldsymbol{Y}$ denote 
forward Euler and the true solution, respectively.
Then, for every $T>0$ there exists $C(T)>0$  such that
\[
 \E^{\boldsymbol{x}} \bigl( 
 | \tilde{\boldsymbol{X}}_{\lfloor t/h \rfloor} -  \boldsymbol{Y}(\lfloor t/h \rfloor h) | \wedge 1 \bigr)
\le C(T) h U^2(\x)    \;,
\]
holds for all $\boldsymbol{x} \in \mathbb{R}^n$, all $h \le 1$, and all $t \in [0,T]$.  
\end{lemma}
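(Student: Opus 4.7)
The plan is to drive forward Euler and the exact SDE \eqref{SDE1} by the same Brownian motion, localize by a stopping time on sublevel sets of $U$, and exploit two key ingredients inside the localization: the additive-noise structure of \eqref{SDE1} (which makes Euler strongly first-order accurate when the drift is Lipschitz) and the globally uniform one-sided Lipschitz bound on $-\nabla U$ from Remark~\ref{oslremark} (which makes the propagation constant in a discrete Gronwall argument independent of the localization level). Setting $E_k = \tilde{\boldsymbol{X}}_k - \boldsymbol{Y}(t_k)$, the additive-noise cancellation gives
\begin{equation*}
E_{k+1} = E_k + h\bigl[\nabla U(\boldsymbol{Y}(t_k)) - \nabla U(\tilde{\boldsymbol{X}}_k)\bigr] + R_k\;,\quad R_k = \int_{t_k}^{t_{k+1}}\!\!\bigl[\nabla U(\boldsymbol{Y}(s)) - \nabla U(\boldsymbol{Y}(t_k))\bigr]\,ds\;.
\end{equation*}
For a level $R=R(h)$ to be chosen, introduce $\tau_R = \inf\{k \ge 0 : U(\tilde{\boldsymbol{X}}_k) \vee U(\boldsymbol{Y}(t_k)) \ge R\}$ and split $\E^{\boldsymbol{x}}(|E_k|\wedge 1) \le \E^{\boldsymbol{x}}(|E_k|\,\mathbf{1}_{\tau_R > k}) + \P^{\boldsymbol{x}}(\tau_R \le k)$.

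For the good-event term, I would apply It\^o's formula to $\nabla U(\boldsymbol{Y}(s)) - \nabla U(\boldsymbol{Y}(t_k))$ to write $R_k = D_k + M_k$, where $D_k$ is a deterministic-type remainder of size $O(L_R h^2)$ and $M_k = \int_{t_k}^{t_{k+1}}(t_{k+1}-r)\sqrt{2\beta^{-1}}D^2 U(\boldsymbol{Y}(r))\,d\boldsymbol{W}(r)$ is a martingale increment whose quadratic variation summed over $k \le T/h$ is of order $L_R^2 h^2 T$. Squaring the recursion for $E_k$, taking expectations, and using Remark~\ref{oslremark} to bound $2h\langle E_k,\nabla U(\boldsymbol{Y}(t_k)) - \nabla U(\tilde{\boldsymbol{X}}_k)\rangle$ by $2Ch|E_k|^2$ with $C$ \emph{independent of} $R$, together with It\^o isometry applied to the cumulative martingale $\sum_j M_j$ (to avoid paying per-step for the noise-induced error), yields $\E(|E_k|^2\mathbf{1}_{\tau_R > k}) \le C(T) h^2$ as long as $L_R^2 h \le 1$, i.e.\ $R \le h^{-1/2}$. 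Hence $\E(|E_k|\mathbf{1}_{\tau_R > k}) \le C(T) h$.

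For the bad-event term, a union bound yields two contributions. For the true solution, Lemma~\ref{lem:driftGen} applied to $\Theta(u) = u^2$, combined with Doob's maximal inequality for nonnegative supermartingales, gives $\P^{\boldsymbol{x}}(\sup_{s \le T} U(\boldsymbol{Y}(s)) \ge R) \le C(T) U^2(\boldsymbol{x})/R^2$. Forward Euler admits no global Lyapunov function, but one can establish a one-step moment recursion $\E[U^2(\tilde{\boldsymbol{X}}_{k+1})\,|\,\tilde{\boldsymbol{X}}_k] \le (1 + Ch)\,U^2(\tilde{\boldsymbol{X}}_k) + Ch$ by Taylor-expanding $U^2(\tilde{\boldsymbol{X}}_{k+1})$ about $\tilde{\boldsymbol{X}}_k$ and using Assumption~\ref{sa}~(D) to dominate every derivative of $U$ by $U$ itself. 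Iterating gives $\sup_{k \le T/h}\E^{\boldsymbol{x}} U^2(\tilde{\boldsymbol{X}}_k) \le C(T)(1+U^2(\boldsymbol{x}))$, and a discrete maximal inequality together with Markov delivers $\P^{\boldsymbol{x}}(\sup_k U(\tilde{\boldsymbol{X}}_k) \ge R) \le C(T) U^2(\boldsymbol{x})/R^2$. Taking $R = h^{-1/2}$ balances the two contributions and produces the claimed bound $C(T) h U^2(\boldsymbol{x})$.

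The principal obstacle is the interplay between the localization level $R$ and the Gronwall argument on the good event. By Assumption~\ref{sa}~(D) the local Lipschitz constant $L_R$ of $\nabla U$ on $\{U \le R\}$ is polynomial in $R$, so the quadratic term $h^2|\nabla U(\boldsymbol{Y}(t_k)) - \nabla U(\tilde{\boldsymbol{X}}_k)|^2$ in the squared recursion has coefficient $L_R^2 h^2$, controlled by $Ch$ only when $R \le h^{-1/2}$; extracting the sharp strong order one from the martingale structure of $\sum_j M_j$, rather than from a pathwise triangle inequality that would yield merely $\sqrt{h}$, is the most delicate step and is where the uniform one-sided Lipschitz bound (Remark~\ref{oslremark}) is essential. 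The secondary obstacle is the one-step moment bound for forward Euler: since $|\tilde{\boldsymbol{X}}_1 - \boldsymbol{x}|$ can be of order $h|\nabla U(\boldsymbol{x})|$, Assumption~\ref{sa}~(D) is precisely what allows closing the $U^2$-moment recursion despite such potentially large single-step excursions.
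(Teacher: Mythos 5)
There are two genuine gaps, both in the handling of the localization, and they are not cosmetic: as written the argument delivers only strong order $1/2$ and relies on a moment bound that is false under Assumption~\ref{sa}.

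First, the good-event estimate does not close. On $\{\tau_R > k\}$ with $R = h^{-1/2}$ you only know $U \le R$ along the trajectory, and Assumption~\ref{sa}~(D) then gives $\|D^2U\| \le CR$ but $|\CL \scal{\nabla U,\eta}| \le CU^2 \le CR^2$: the generator applied to $\nabla U$ scales like $U^2$, not $U$. Hence the drift part of your remainder is $|D_k| \le C h^2 R^2 = Ch$ per step (not $O(L_R h^2)$), which already sums to $O(T)$; and even your own accounting of the cumulative martingale gives summed quadratic variation $L_R^2 h^2 T = hT$ under $L_R^2 h \le 1$, which injects $O(h)$ --- not $O(h^2)$ --- into $\E|E_k|^2$. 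The condition $L_R^2 h \le 1$ controls the Gronwall term $h^2 L_R^2 |E_k|^2$, but the source terms would need $L_R = O(1)$, incompatible with $R \to \infty$. The absence of any $U(\x)$-dependence in your claimed $\E(|E_k|^2\mathbf{1}_{\tau_R>k}) \le C(T)h^2$ is the symptom. The paper instead bounds the remainder by conditional \emph{moments} of the exact solution, $|\E[\nabla U(\Y_s)-\nabla U(\Y_0)\mid \CF_0]| \le C s\, U^2(\Y_0)$ and $\E[|\nabla U(\Y_s)-\nabla U(\Y_0)|^2\mid\CF_0] \le Cs\, U^4(\Y_0)$, via It\^o's formula and the Lyapunov bounds of Remark~\ref{SDEdriftcondition2}; this gives a per-step source $Ch^3 U^4(\x)$, hence $\E(|E_k|^2\wedge 1) \le C(T) h^2 U^4(\x)$ and the stated bound after Jensen --- and it is precisely where the factor $U^2(\x)$ comes from.

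Second, the one-step recursion $\E[U^2(\tilde{\boldsymbol{X}}_{k+1})\mid \tilde{\boldsymbol{X}}_k] \le (1+Ch)U^2(\tilde{\boldsymbol{X}}_k)+Ch$ is false for large $U(\tilde{\boldsymbol{X}}_k)$: once $hU(\tilde{\boldsymbol{X}}_k)\gg 1$ the Euler step overshoots, Lemma~\ref{lem:Uexp} only yields $U(\tilde{\boldsymbol{X}}_{k+1}) \le U(\tilde{\boldsymbol{X}}_k)\exp(ChU(\tilde{\boldsymbol{X}}_k)+C|\boldsymbol{\xi}|)$, and for potentials with exponential growth (explicitly allowed) $\E^{\x}U^2(\tilde{\boldsymbol{X}}_k)$ can be infinite already at $k=2$; this is consistent with \eqref{emunboundedmoments} and is why the paper's Lemma~\ref{Eulerlocaldrift} is restricted to $U(\x) < h^{-1/2}$. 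The related claim that the Lipschitz constant of $\nabla U$ on $\{U\le R\}$ is polynomial in $R$ also fails, since that sublevel set need not be convex and $U$ on the chord can be exponentially large in $\sqrt R$. The paper sidesteps all of this with no stopping time at all: the truncation $\wedge 1$ is carried \emph{inside} the induction (``boundedness of the metric plays the role of stability''), using that the recursion is trivially true when $|E_k| > 1$, while the case $|E_k|\le 1$ is exactly what allows Lemma~\ref{lem:Uexp} to control $|\nabla U(\tilde{\boldsymbol{X}}_k)-\nabla U(\Y_{t_k})|$. Your architecture (same-noise coupling, Remark~\ref{oslremark}, martingale/drift splitting of the remainder) is the right one, but the localization must be replaced by the $\wedge 1$ truncation and the exact-solution Lyapunov moments for the lemma to hold as stated.
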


\begin{proof}
The proof goes by induction over the number of steps, so let us consider one single step first.
We then have
\begin{equ}
\tilde{\boldsymbol{X}}_1 -  \Y_h = \tilde{\boldsymbol{X}}_0 -  \Y_0 - \int_0^h \bigl(\nabla U(\tilde{\boldsymbol{X}}_0) - \nabla U(\Y_s)\bigr)\,ds\;,
\end{equ}
so that 
\begin{equs}
|\tX_1 -  \Y_h|^2 &=  |\tX_0 - \Y_0|^2 - 2h \scal{\tX_0 - \Y_0, \nabla U(\tX_0) - \nabla U(\Y_0)} \\
&\quad + 2 \int_0^h \scal{\tX_0 - \Y_0, \nabla U(\Y_s) - \nabla U(\Y_0)}\,ds \\
&\quad + \Bigl|\int_0^h \bigl(\nabla U(\tX_0) - \nabla U(\Y_s)\bigr)\,ds\Bigr|^2\;.
\end{equs}
Together with Remark~\ref{oslremark}, this implies that there exists a constant $C$ such that 
\begin{equs}
|\tX_1 -  \Y_h|^2 &\le (1+ Ch) |\tX_0 - \Y_0|^2 + 2h^2 |\nabla U(\tilde{\boldsymbol{X}}_0) - \nabla U(\Y_0)|^2 \\
&\quad + 2 \int_0^h \scal{\tX_0 - \Y_0, \nabla U(\Y_s) - \nabla U(\Y_0)}\,ds \\
&\quad + 2h\int_0^h \bigl|\nabla U(\Y_s) - \nabla U(\Y_0)\bigr|^2\,ds\;. \label{e:bounddiffEuler}
\end{equs}
Note now that if $\eta$ is any unit vector in $\R^n$, we have the identity
\begin{equs}
\scal{\nabla U(\Y_s) - \nabla U(\Y_0), \eta} &= \int_0^s \CL \scal{\nabla U, \eta}(\Y_r)\,dr 
 + \sqrt{2\over \beta} \int_0^s D^2 U(\Y_r)(\eta, d\W_r)\;.
\end{equs}
Since $\|D^2 U\| \le C U$ and $|\CL \scal{\nabla U, \eta}| \le C U^2$, it then follows from Remark~\ref{SDEdriftcondition2}
that there exists a constant $C$ such that 
\begin{equs}
\E |\nabla U(\Y_s) - \nabla U(\Y_0)|^2 &\le C s U^4(\Y_0)\;,\quad \forall s \le 1\;,\\
|\E \scal{\eta,\nabla U(\Y_s) - \nabla U(\Y_0)}| &\le C s U^2(\Y_0)\;,\quad \forall s \le 1\;.
\end{equs}
On the other hand, one also has the bound
\begin{equ}
\bigl|\nabla U(\tilde{\boldsymbol{X}}_0) - \nabla U(\Y_0)\bigr|^2 \le C |\tX_0 - \Y_0|^2 \exp \bigl(C |\tX_0 - \Y_0|\bigr) U^2(\Y_0)\;,
\end{equ}
which follows from Assumption~\ref{sa}~(D) and Lemma~\ref{lem:Uexp} below. In the case where $|\tX_0 - \Y_0| \le 1$, this yields
\begin{equ}
\bigl|\nabla U(\tilde{\boldsymbol{X}}_0) - \nabla U(\Y_0)\bigr|^2 \le h^{-1} |\tX_0 - \Y_0|^2 + h C U^4(\Y_0)\;.
\end{equ}
Inserting these bounds into
\eref{e:bounddiffEuler}, we see that there is $C>0$ such that if $|\tX_0 - \Y_0| \le 1$, then
\begin{equ}
\E |\tX_1 -  \Y_h|^2 \le (1+Ch)|\tX_0 - \Y_0|^2 + Ch^3 U^4(\Y_0)\;.
\end{equ}
Since on the other hand, one obviously has $\E \bigl(|\tX_1 -  \Y_h|^2\wedge 1\bigr) \le 1$, we conclude that 
\begin{equ}
\E \bigl(|\tX_1 -  \Y_h|^2\wedge 1\bigr) \le (1+ Ch)\bigl(|\tX_0 - \Y_0|^2\wedge 1\bigr) + Ch^3 U^4(\Y_0)\;.
\end{equ}
The requested bound now follows from the \textit{a priori} bounds on the solution $\Y_t$ given by
Remark~\ref{SDEdriftcondition2}.
\end{proof}

\begin{lemma}  \label{EulerLocalLipschitz}
Let $U$ be a potential satisfying Assumption~\ref{sa}.
Let $\tilde{P}_h$ denote the transition probability of forward Euler.
For every $h<1$ and for all $\boldsymbol{x}, \boldsymbol{y} \in \mathbb{R}^n$,
 \[
\| \tilde{P}_h(\boldsymbol{x}, \cdot) - \tilde{P}_h(\boldsymbol{y}, \cdot) \|_{\TV}
\le   \frac{1}{\sqrt{2 \beta^{-1} h}}  | \boldsymbol{x} - \boldsymbol{y} | + 
   \frac{\sqrt{h}}{\sqrt{2 \beta^{-1}}}  | \nabla U(\boldsymbol{x} ) - \nabla U(\boldsymbol{y} ) |  \;.
\]
\end{lemma}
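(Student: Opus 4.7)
The plan is to read the one-step kernel \eqref{Eulerdensity} as the density of a Gaussian: writing $\boldsymbol{m}(\boldsymbol{x}) \eqdef \boldsymbol{x} - h \nabla U(\boldsymbol{x})$, the measure $\tilde{P}_h(\boldsymbol{x}, \cdot)$ is a Gaussian on $\mathbb{R}^n$ with mean $\boldsymbol{m}(\boldsymbol{x})$ and covariance $2\beta^{-1} h\, I$, and similarly for $\boldsymbol{y}$. Since both measures share the same covariance, a direct integration against $\tilde{P}_h(\boldsymbol{x}, \cdot)$ gives the explicit relative entropy
\[
\mathrm{KL}\bigl(\tilde{P}_h(\boldsymbol{x}, \cdot) \,\|\, \tilde{P}_h(\boldsymbol{y}, \cdot)\bigr) = \frac{|\boldsymbol{m}(\boldsymbol{x}) - \boldsymbol{m}(\boldsymbol{y})|^2}{4\beta^{-1} h} \;.
\]

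I would then invoke Pinsker's inequality in the normalisation used throughout the paper (where $\|\mu - \nu\|_{\TV}$ is the $L^1$ distance between densities), namely $\|\mu - \nu\|_{\TV} \le \sqrt{2\, \mathrm{KL}(\mu \| \nu)}$, to convert the identity above into
\[
\| \tilde{P}_h(\boldsymbol{x}, \cdot) - \tilde{P}_h(\boldsymbol{y}, \cdot) \|_{\TV} \le \frac{|\boldsymbol{m}(\boldsymbol{x}) - \boldsymbol{m}(\boldsymbol{y})|}{\sqrt{2 \beta^{-1} h}} \;.
\]
Expanding $\boldsymbol{m}(\boldsymbol{x}) - \boldsymbol{m}(\boldsymbol{y}) = (\boldsymbol{x} - \boldsymbol{y}) - h\bigl(\nabla U(\boldsymbol{x}) - \nabla U(\boldsymbol{y})\bigr)$, applying the triangle inequality in $\mathbb{R}^n$, and using the simplification $h/\sqrt{2\beta^{-1}h} = \sqrt{h}/\sqrt{2\beta^{-1}}$ then produces the bound stated in the lemma.

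I do not anticipate any real obstacle: the estimate is a quantitative version of translation sensitivity for a fixed-covariance Gaussian family, obtained by composing a one-line KL computation with Pinsker and the triangle inequality. An alternative, purely analytic, route would be to use Scheffé's identity to write the $\TV$ distance as the $L^1$ distance between the two Gaussian densities and bound it directly by the mean value theorem applied in the translation parameter; this yields the same estimate but is less transparent than the Pinsker route. Note in particular that no regularity assumption on $\nabla U$ is invoked anywhere in the argument, which is why the statement holds in the nonglobally Lipschitz setting relevant to the rest of the paper.
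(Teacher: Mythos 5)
Your proposal is correct and follows essentially the same route as the paper: the paper's proof quotes the bound $\|\mathcal{N}(0,\sigma) - \mathcal{N}(\boldsymbol{x},\sigma)\|_{\TV} \le |\boldsymbol{x}|/\sqrt{\sigma}$ directly as ``Pinsker's inequality'' and combines it with the observation that $\tilde{P}_h(\boldsymbol{x},\cdot) = \mathcal{N}(\boldsymbol{x} - h\nabla U(\boldsymbol{x}), 2\beta^{-1}h\mathbf{I})$, which is exactly your KL-plus-Pinsker computation with the intermediate step spelled out. Your derivation of the constant (using the paper's normalisation of $\TV$ as the full $L^1$ distance) and the final triangle inequality step match the paper's argument.
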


\begin{proof}
Recalling Pinsker's inequality:
\begin{equ}
\|\mathcal{N}(0,\sigma) - \mathcal{N}(\x, \sigma)\|_\TV \le {| \x|\over \sqrt \sigma} \;,
\end{equ}
we see that the claim follows from the fact that 
\begin{equ}
\tilde{P}_h(\boldsymbol{x}, \cdot) = \mathcal{N}( \boldsymbol{x} - h \nabla U(\boldsymbol{x} ), 2 \beta^{-1} h \mathbf{I})\;,
\end{equ}
where $\mathbf{I}$ denotes the identity matrix.
\end{proof}

\begin{lemma}  \label{EulerWeak}
Let $U$ be a potential satisfying Assumption~\ref{sa}.
Let $\tilde{P}_h$ and $Q_h$ denote the transition probability of forward Euler
and the true solution, respectively.
Then, there exists $C>0$ such that, for every  $h<1$, the bound
 \[
\| \tilde{P}_h(\boldsymbol{x}, \cdot) - Q_h(\boldsymbol{x}, \cdot) \|_{\TV}
\le C h U^2(\x) \;,
\]
holds for all $\boldsymbol{x} \in \mathbb{R}^n$.  
\end{lemma}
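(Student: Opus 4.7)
The plan is to use Girsanov's theorem together with Pinsker's inequality (as in the proof of Lemma~\ref{EulerLocalLipschitz}) to compare the two one-step transition probabilities as marginals of path-space measures.

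First I would realise $\tilde P_h(\x,\cdot)$ as the time-$h$ marginal of the \emph{frozen-drift} SDE
\begin{equ}
d\tilde\Y_t = -\nabla U(\x)\,dt + \sqrt{2\beta^{-1}}\,d\W_t\;,\qquad \tilde \Y_0 = \x\;,
\end{equ}
while $Q_h(\x,\cdot)$ is the time-$h$ marginal of \eqref{SDE1} started at $\x$. Both path measures are equivalent on $[0,h]$ by Girsanov, with Radon--Nikodym derivative whose logarithm gives
\begin{equ}
\mathrm{KL}\bigl(\mathrm{Law}(\Y_{[0,h]})\,\|\,\mathrm{Law}(\tilde\Y_{[0,h]})\bigr) = {\beta\over 4}\,\E \int_0^h \bigl|\nabla U(\Y_s) - \nabla U(\x)\bigr|^2\,ds\;.
\end{equ}

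Second, I would control this integrand using the bound
\begin{equ}
\E \bigl|\nabla U(\Y_s) - \nabla U(\x)\bigr|^2 \le C s\, U^4(\x)\;,\qquad s \le 1\;,
\end{equ}
which was already established in the proof of Lemma~\ref{EulerStrongAccuracy} by combining It\^o's formula applied to $\nabla U(\Y)$ with the bounds $\|D^2 U\| \le C U$ and $|\CL \scal{\nabla U,\eta}| \le C U^2$ from Assumption~\ref{sa}~(D), and with the a priori Lyapunov estimate \eref{e:Lyap} for $\Theta(u) = u^4$ coming from Remark~\ref{SDEdriftcondition2}. Integrating in $s$ yields
\begin{equ}
\mathrm{KL}\bigl(\mathrm{Law}(\Y_{[0,h]})\,\|\,\mathrm{Law}(\tilde\Y_{[0,h]})\bigr) \le C h^2\, U^4(\x)\;.
\end{equ}

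Finally, by the data-processing inequality the KL divergence between the time-$h$ marginals $Q_h(\x,\cdot)$ and $\tilde P_h(\x,\cdot)$ is bounded by the path-space KL, and Pinsker's inequality (in the normalisation where $\|\mu-\nu\|_\TV \le \sqrt{2\,\mathrm{KL}(\mu\|\nu)}$) gives
\begin{equ}
\|\tilde P_h(\x,\cdot) - Q_h(\x,\cdot)\|_\TV \le \sqrt{2 C h^2 U^4(\x)} \le C' h\, U^2(\x)\;,
\end{equ}
which is exactly the claim. The only delicate point is the uniform-in-$s$ moment bound above, but since $s \in [0,h] \subset [0,1]$ and the Lyapunov function $\Phi_\theta = \exp(\theta U)$ from Remark~\ref{SDEdriftcondition2} dominates every polynomial power of $U$, this bound is available from the already-proved machinery and poses no real obstacle. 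The proof is otherwise a short application of the standard Girsanov/Pinsker pairing.
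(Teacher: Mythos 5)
Your route is correct but genuinely different from the paper's. The paper does \emph{not} go through an entropy bound: it first replaces $U$ by a truncated potential $\tilde U$ whose value and first two derivatives are bounded by $C\,U(\x)$ (showing via an exponential-martingale estimate that the two dynamics agree on $[0,h]$ up to a probability that is exponentially small in $h^{-1/3}$), then writes the Girsanov densities of both path measures with respect to Wiener measure in integrated (It\^o-formula) form, and estimates $\int|1-Z_h^{-1}e^{\CD_h}|\,d\CP_h$ directly by splitting off the set where $|\CD_h|\ge 1$ and applying Cauchy--Schwarz. Your KL/data-processing/Pinsker argument is shorter and, combined with the moment bound $\E|\nabla U(\boldsymbol{Y}_s)-\nabla U(\x)|^2\le Cs\,U^4(\x)$ already established inside the proof of Lemma~\ref{EulerStrongAccuracy}, does deliver $\mathrm{KL}\le Ch^2U^4(\x)$ and hence $\|\tilde P_h(\x,\cdot)-Q_h(\x,\cdot)\|_\TV\le ChU^2(\x)$ in the paper's normalisation. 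The one point you underplay is precisely the point the paper's truncation is designed to handle: since $|\nabla U|$ may grow exponentially, neither mutual absolute continuity of the two path laws nor the identity $\mathrm{KL}(\P\,\|\,\Q)=\frac{\beta}{4}\E\int_0^h|\nabla U(\boldsymbol{Y}_s)-\nabla U(\x)|^2\,ds$ is automatic; you need to invoke a version of Girsanov's theorem valid under the local condition $\int_0^h|\nabla U|^2\,ds<\infty$ a.s.\ (available since the true solution is non-explosive by Lemma~\ref{lem:driftGen}), and to note that the stochastic-integral term in $\log\frac{d\P}{d\Q}$ has vanishing expectation because your moment bound makes it a true $L^2$ martingale. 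With that justification supplied (or with the paper's truncation step grafted in), your proof is complete; what it buys is a cleaner estimate with no case-splitting on $hU^2(\x)$, at the price of leaning on a sharper form of Girsanov's theorem than the bounded-coefficient version the paper reduces to.
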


\begin{proof}
We write $E_0 = U(\x)$ as a shorthand. The bound is trivial if $E_0^2 h \ge 1$, so we can and will assume in the sequel that  $E_0^2 h \le 1$.
Recall that the transition probabilities $Q_h$ are generated by the solutions at time $h$ to
\begin{equ}[e:truesol]
d \boldsymbol{Y} = -\nabla U \bigl( \boldsymbol{Y} \bigr)\,dt 
+ \sqrt{2 \beta^{-1}} d \boldsymbol{W}\;,\qquad \boldsymbol{Y}(0) = \boldsymbol{x}\;,
\end{equ}
whereas the transition probabilities $\tilde{P}_h$ of forward Euler can be interpreted as the solution
at time $h$ to
\begin{equ}[e:Eulersol]
d \tilde{\boldsymbol{X}} = -\nabla U \bigl( \boldsymbol{x} \bigr)\,dt 
+ \sqrt{2 \beta^{-1}} d \boldsymbol{W}\;,\qquad \tilde{\boldsymbol{X}}(0) = \boldsymbol{x}\;.
\end{equ}
Therefore, the required quantity can be bounded from above by the total variation distance between
the measures generated by \eref{e:truesol} and \eref{e:Eulersol} on pathspace between times $0$ and $h$.
Since only the drift differs in the SDEs \eref{e:truesol} and \eref{e:Eulersol}, Girsanov's theorem can be used
to quantify the distance between the laws of the solutions at time $h$ 
to \eref{e:truesol} and \eref{e:Eulersol}.

We first replace the potential $U$ by a modified potential $\tilde U$ which is bounded, together with all
of its derivatives. Indeed, let $\phi\colon \R_+ \to \R$ be a smooth increasing function such that $\phi(x) = x$
for $x \le 2$ and $\phi(x) = 3$ for $x \ge 4$. With this definition at hand, we set
\begin{equ}
\tilde U( \boldsymbol{y}) = U(\boldsymbol{x})\phi\bigl(U( \boldsymbol{y})/U(\boldsymbol{x})\bigr)\;.
\end{equ}
It then follows from Assumption~\ref{sa}~(D) that there exists a constant $C$ such that
\begin{equ}[e:boundUtilde]
|\tilde U(\boldsymbol{y})| + \|D\tilde U(\boldsymbol{y})\| + \|D^2\tilde U(\boldsymbol{y})\| \le C E_0 \;,
\end{equ}
uniformly over all $\boldsymbol{y} \in \R^n$. 

Before we proceed, we argue that if we define 
\begin{equ}[e:truetilde]
d \tilde{\boldsymbol{Y}} = -\nabla \tilde{U} \bigl( \tilde {\boldsymbol{Y}} \bigr)\,dt 
+ \sqrt{2 \beta^{-1}} d \boldsymbol{W}\;,\qquad \tilde{\boldsymbol{Y}}(0) = \boldsymbol{x}\;,
\end{equ}
then, one has 
$\P \bigl(\exists t \le h \,:\, \boldsymbol{Y}(t) \neq \tilde{\boldsymbol{Y}}(t)\bigr) \le C E_0^2 h$,
so that we can replace $U$ by $\tilde U$ in \eref{e:truesol} without any loss of generality. 
In order to show this, we note that Lemma~\ref{lem:driftGen} yields the existence of a constant $K$ such that
$M(t) = U(\boldsymbol{Y}(t)) - Kt - U(\x)$ is a supermartingale with quadratic variation process
\begin{equ}
\scal{M,M}(t) = 2\beta^{-1}\int_0^t |\nabla U(\boldsymbol{Y}(s))|^2\,ds\;.
\end{equ}
Furthermore, for $E_0$ sufficiently large (independently of $h$), one has 
\begin{equ}
\P \bigl(\exists t \le h \,:\, \boldsymbol{Y}(t) \neq \tilde{\boldsymbol{Y}}(t)\bigr) 
\le \P\bigl(\sup_{t \le h} M_t \ge {\textstyle{1\over2}} U(\x)\bigr)\;.
\end{equ}
It then follows from the
exponential martingale inequality \cite[p.~153]{Yor} that, for every $\Lambda > 0$, one has the bound 
\begin{equ}
\P \bigl(\exists t \le h \,:\, \boldsymbol{Y}(t) \neq \tilde{\boldsymbol{Y}}(t)\bigr) 
\le \exp\bigl(-U^2(\x)/(8\Lambda)\bigr) + \P \bigl(\scal{M,M}(h) \ge \Lambda\bigr)\;.
\end{equ}
For $\delta > 0$ sufficiently small, the second term in this 
expression can then be bounded by
\begin{equs}
\P \bigl(\scal{M,M}(h) \ge \Lambda\bigr) &\le \exp \bigl(-\sqrt{\delta \Lambda h^{-1}}\bigr) \E \exp \sqrt{\delta h^{-1} \scal{M,M}(h)}\\
 &\le \exp \bigl(-\sqrt{\delta \Lambda h^{-1}}\bigr) {1\over h} \int_0^h \E \exp \sqrt{2\beta^{-1}\delta |\nabla U(\boldsymbol{Y}(s))|^2}\,ds\\
 &\le \exp \bigl(-\sqrt{\delta \Lambda h^{-1}}\bigr) {1\over h} \int_0^h \E \exp \bigl(C \sqrt \delta U(\boldsymbol{Y}(s))\bigr)\,ds\\
 &\le C\exp \bigl(-\sqrt{\delta \Lambda h^{-1}}\bigr) \exp \bigl(C \sqrt \delta U(\x)\bigr)\;.
\end{equs}
Here, we have first used Chebychev's inequality, followed by Jensen's inequality, then 
Assumption~\ref{sa}~(D), and finally Lemma~\ref{lem:driftGen}
with $\delta$ small enough. Setting $\Lambda = U^2(\x) h^{-1/3}$, it follows that for $h$ small enough
we actually have $\P \bigl(\exists t \le h \,:\, \boldsymbol{Y}(t) \neq \tilde{\boldsymbol{Y}}(t)\bigr) \le 2 \exp(-c h^{-1/3})$
for some positive constant $c$, which is much better than needed.

We now proceed by comparing the true solution and forward Euler for $\tilde{U}$.
Denote now by $\CQ_h$ the measure on pathspace generated by \eref{e:truetilde}, by $\CP_h$ the measure
on pathspace generated by solutions to \eref{e:Eulersol}, and by $\CW_h$ Wiener measure on $\CC([0,h], \R^d)$ 
with starting point $\boldsymbol{x}$.  It then follows from Girsanov's theorem that
\begin{equs}
{d\CQ_h \over d\CW_h}(\boldsymbol{W}) &= 
Z_Q^{-1} \exp \Bigl(
- \frac{1}{\sqrt{2 \beta^{-1}}}  ( \tilde U(\boldsymbol{W}_h) - \tilde U(\boldsymbol{x})  ) 
- \beta \int_0^h G(\boldsymbol{W}_t)\,dt\Bigr)\;, \\
{d\CP_h \over d\CW_h}(\boldsymbol{W}) &= 
Z_P^{-1} \exp \Bigl(
- \frac{1}{\sqrt{2 \beta^{-1}}}  \nabla \tilde U(\boldsymbol{x})^T (\boldsymbol{W}_h - \boldsymbol{x}) 
- h \beta |\nabla \tilde U(\boldsymbol{x})|^2\Bigr)\;,
\end{equs}
for some normalisation factors $Z_P$ and $Z_Q$, where the function $G$ is given by
\begin{equ}
G(x) =  |\nabla \tilde U(x )|^2 -  \Delta \tilde U(x)  \;.
\end{equ}
(See for example \cite[Theorem~11A]{David}.) In particular, we have
\begin{equs}
{d\CQ_h \over d\CP_h}(\boldsymbol{W}) &= Z_h^{-1}  \exp \Bigl(- 
\frac{1}{\sqrt{2 \beta^{-1}}}  \bigl(
\tilde U(\boldsymbol{W}_h) - \tilde U(\boldsymbol{x}) 
-  \nabla \tilde U(\boldsymbol{x})^T (\boldsymbol{W}_h - \boldsymbol{x})\bigr) \\
&\qquad - \int_0^h \beta \Bigl(G(\boldsymbol{W}_t) - |\nabla \tilde U(\boldsymbol{x})|^2\Bigr)\,dt\Bigr) \eqdef 
Z_h^{-1}\exp\bigl(\CD_h(\boldsymbol{W})\bigr)\;,
\end{equs}
where the normalisation constant $Z_h$ is given by
\begin{equ}
Z_h = \int \exp(\CD_h)\,d\CP_h\;.
\end{equ}
By \eref{e:boundUtilde}, there exists a constant $C>0$ such that the 
bound \begin{equ}[e:boundDW]
|\CD_h(\boldsymbol{W})| \le C E_0\bigl(|\boldsymbol{W}_h-\boldsymbol{x}|^2  + E_0 h\bigr)\;,
\end{equ}
holds for every $\boldsymbol{W}$. As an immediate consequence, for every $c>0$, there exists a constant $C>0$ such that 
\begin{equ}
\Bigl|\log \int \exp \bigl(c\CD_h\bigr)\,d\CP_h\Bigr| \le C E_0^2 h
\end{equ}
for every $h \le 1$. In particular, one has $Z_h = 1+ \CO(E_0^2 h)$ and similarly for $Z_h^{-1}$.
Denote now by $B_h$ the set
\begin{equ}
B_h = \{\boldsymbol{W}\,:\, |\CD_h(\boldsymbol{W})| \ge 1\}\;.
\end{equ}
It follows from the bound \eref{e:boundDW} that $\CP_h(B_h) \le C\exp\bigl(-c/(hE_0)\bigr)$ for some $c,C>0$ and for $h E_0^2 \le 1$.

We conclude that
\begin{equs}
\|\CQ_h - \CP_h\|_{\TV} &= \int |1 - Z_h^{-1} \exp(\CD_h)|\, d\CP_h \le 
C\int_{B_h^c} |\CD_h|\, d\CP_h + \CO(E_0^2h) \\
&\qquad + \int_{B_h} |1 - Z_h^{-1} \exp(\CD_h)|\, d\CP_h \\
&\le \CO(E_0^2h) + \sqrt{{\CP_h(B_h) \over Z_h^{2}}\Bigl(\int \exp(2\CD_h)\, d\CP_h - 1\Bigr)}
= \CO(E_0^2h)\;,
\end{equs}
as required.  In the last step, we have used the Cauchy-Schwarz inequality.
\end{proof}

\begin{lemma}
For every $T>0$,
there exists a $C(T)>0$ such that 
\[
\sup_{t \in [0, T]} 
\| \bar{P}_h^{\lfloor t/h \rfloor}(\boldsymbol{x}, \cdot) - \tilde{P}_h^{\lfloor t/h \rfloor}(\boldsymbol{x}, \cdot) \|_{\TV}
\le C(T) \sqrt{h}    U^3(\x)
\]
holds  for every $h<1$ and for every $\boldsymbol{x} \in \mathbb{R}^n$.  
\label{patchedMALAtoEuler}
\end{lemma}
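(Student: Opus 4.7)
The plan is to compare the patched MALA chain $\{\bar{\boldsymbol{X}}_k\}$ and the forward Euler chain $\{\tilde{\boldsymbol{X}}_k\}$ via a synchronous coupling that exploits the fact that both chains share the same proposal distribution at every step and differ only in patched MALA's accept/reject rule. First I would reduce to the case $\boldsymbol{x} \in R_h$: if $U(\boldsymbol{x}) \ge E_h = E_\star h^{-1/4}$, then $\sqrt{h}\,U^3(\boldsymbol{x}) \ge E_\star^3 h^{-1/4}$, which exceeds the maximal TV distance $2$ for small $h$, so the statement is trivial. I would then build the coupling by driving the two chains with the same Brownian increments $\boldsymbol{W}(t_{k+1}) - \boldsymbol{W}(t_k)$ and the same uniform variables $\zeta_k$, starting from the common initial point $\boldsymbol{x}$. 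As long as the two chains agree and the common state lies in $R_h$, they propose the same Euler step and take the same decision, so they diverge exactly at the first step $\sigma$ at which patched MALA rejects---either because the Metropolis criterion fails or because the proposal falls outside $R_h$. The coupling characterization of total variation then gives
\[
\|\bar{P}_h^n(\boldsymbol{x}, \cdot) - \tilde{P}_h^n(\boldsymbol{x}, \cdot)\|_\TV \le 2\,\P^{\boldsymbol{x}}(\sigma \le n)\;,
\]
reducing the problem to bounding the first-rejection probability up to $n = \lfloor t/h \rfloor$ steps.

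The main technical input is a per-step bound on the rejection probability at a state $\boldsymbol{y} \in R_h$. Writing $r_h(\boldsymbol{y}, \boldsymbol{y}^*) = \log\bigl(q_h(\boldsymbol{y}^*,\boldsymbol{y})\pi(\boldsymbol{y}^*) / (q_h(\boldsymbol{y},\boldsymbol{y}^*)\pi(\boldsymbol{y}))\bigr)$ for the log Metropolis--Hastings ratio and using $1 - (1 \wedge e^{r_h}) \le |r_h|$, I would estimate the rejection probability by
\[
p_r(\boldsymbol{y}) \;\le\; \E\bigl|r_h(\boldsymbol{y}, \boldsymbol{y}^*)\bigr| + \P(\boldsymbol{y}^* \notin R_h)\;.
\]
The first term is controlled by a Taylor expansion of $r_h$ in powers of $\boldsymbol{y}^* - \boldsymbol{y} = -h\nabla U(\boldsymbol{y}) + \sqrt{2\beta^{-1}}\boldsymbol{\xi}$: the contributions of orders $O(|\boldsymbol{y}^* - \boldsymbol{y}|)$ and $O(|\boldsymbol{y}^* - \boldsymbol{y}|^2)$ cancel algebraically (this is the standard detailed-balance cancellation underlying MALA's weak second order), leaving a residual controlled by the third and fourth derivatives of $U$ together with a term proportional to $h\bigl(|\nabla U(\boldsymbol{y})|^2 - |\nabla U(\boldsymbol{y}^*)|^2\bigr)$. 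Using Assumption~\ref{sa}~(D) to bound these derivatives by $U(\boldsymbol{y})$, Taylor expansion of $\nabla U$ around $\boldsymbol{y}$, and standard Gaussian moment estimates for $\boldsymbol{\xi}$, I obtain $\E|r_h(\boldsymbol{y}, \boldsymbol{y}^*)| \le C h^{3/2} U^q(\boldsymbol{y})$ for some small integer $q$. The probability $\P(\boldsymbol{y}^* \notin R_h)$ admits an even smaller polynomial-in-$U$ bound, via a Gaussian tail estimate of the kind used at the end of the proof of Lemma~\ref{patchedMALAdrift}.

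To convert this into the required global bound, I would write
\[
\P^{\boldsymbol{x}}(\sigma \le n) \;\le\; \sum_{k=0}^{n-1} \E^{\boldsymbol{x}}\bigl[\mathbf{1}_{\sigma > k}\, p_r(\bar{\boldsymbol{X}}_k)\bigr] \;\le\; C h^{3/2} \sum_{k=0}^{n-1} \E^{\boldsymbol{x}}\bigl[U^q(\bar{\boldsymbol{X}}_k)\bigr]\;,
\]
and control the running expectations via a polynomial Lyapunov bound for patched MALA. By Remark~\ref{SDEdriftcondition2}, the function $\Theta(u) = u^q$ satisfies the hypotheses of Lemma~\ref{lem:driftGen}, and the argument of Lemma~\ref{patchedMALAdrift} applies verbatim with $\Theta(u) = u^q$ in place of $\exp(\theta u)$, yielding $\E^{\boldsymbol{x}}[U^q(\bar{\boldsymbol{X}}_k)] \le C(U^q(\boldsymbol{x}) + 1)$ uniformly in $k$. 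Inserting this, summing over $n \le T/h$, and combining with the $h^{3/2}$ factor produces the claimed bound $C(T)\sqrt{h}\,U^3(\boldsymbol{x})$, after reducing $q$ to $3$ (absorbing smaller powers into the constant on $R_h$).

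The hardest part will be the Taylor expansion of $r_h$: extracting the algebraic cancellations that turn the naive $O(h^{1/2})$ bound into the true $O(h^{3/2})$ bound, while keeping the dependence on $U(\boldsymbol{y})$ polynomial rather than exponential, requires a careful accounting of every Taylor remainder and uses all four derivatives controlled by Assumption~\ref{sa}~(D). Once this per-step estimate is in hand, the remainder of the proof is a routine union bound combined with the polynomial-Lyapunov version of Lemma~\ref{patchedMALAdrift}.
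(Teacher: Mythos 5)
Your proposal follows essentially the same route as the paper's proof: a synchronous coupling reducing the TV distance to the cumulative rejection probability, a per-step bound of order $h^{3/2}$ times a low power of $U$ obtained from the detailed-balance cancellation in the log Metropolis ratio (this is exactly the paper's Lemma~\ref{MALAstagnationprobability}, which gives $C h^{3/2} U^2(\boldsymbol{x})$ for $U(\boldsymbol{x})<h^{-1/2}$), and a running Lyapunov bound summed over $n \le T/h$ steps. The only deviation is in the last step, where you invoke a polynomial drift condition for patched MALA ``verbatim'' from Lemma~\ref{patchedMALAdrift} --- this would actually require an analogue of Proposition~\ref{MALAlocaldrift} for $\Theta(u)=u^q$ that the paper does not state --- whereas the paper instead notes that the chains agree up to the first rejection and controls $\E^{\boldsymbol{x}}\bigl(U(\tilde{\boldsymbol{X}}_k)\wedge\alpha h^{-1/2}\bigr)^3$ along the \emph{forward Euler} chain via Corollary~\ref{globalEuler}; since your sum carries the indicator $\mathbf{1}_{\sigma>k}$ on which $\bar{\boldsymbol{X}}_k=\tilde{\boldsymbol{X}}_k$, you can substitute the paper's truncated-Lyapunov argument directly, and the discrepancy is harmless (provided your exponent $q$ comes out at most $3$, since on $R_h$ one cannot absorb higher powers of $U$ into $U^3$ without paying negative powers of $h$).
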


\begin{proof}
Denote by $\tilde{\boldsymbol{X}}_{k}$ the solution to the forward Euler algorithm after $k$ steps and by
$\boldsymbol{X}_{k}$ the solution to the MALA algorithm. Since both agree until the first time
that one step is rejected, it follows from the coupling inequality that we have the bound
\begin{equ}
\| \bar{P}_h^{n}(\boldsymbol{x}, \cdot) - \tilde{P}_h^{n}(\boldsymbol{x}, \cdot) \|_{\TV}
\le 2 \sum_{k=0}^{n-1} \E^\x \bigl|1-\alpha_h(\tilde{\boldsymbol{X}}_{k}, \tilde{\boldsymbol{X}}_{k+1})\bigr|\;.
\end{equ}
At this stage, we note that since $\alpha_h \in [0,1]$, it follows from Lemma~\ref{MALAstagnationprobability} 
that for every $\alpha > 0$ there exists a $C>0$ such that the bound
\begin{equ}
\E^\x \bigl|1-\alpha_h(\x, \tilde{\boldsymbol{X}}_{1})\bigr| 
\le  C h^{3/2} \bigl(U(\x) \wedge \alpha h^{-1/2})^3\;,
\end{equ}
holds for \textit{all} $\x \in \R^n$. This is simply because this bound is trivial for $U(\x) > 1/\sqrt h$.

Making $\alpha$ sufficiently small and combining this with Corollary~\ref{globalEuler}, we then obtain
\begin{equs}
\E^\x \bigl|1-\alpha_h(\tilde{\boldsymbol{X}}_{k}, \tilde{\boldsymbol{X}}_{k+1})\bigr| 
&\le  C h^{3/2} \E^\x \bigl(U(\tilde{\boldsymbol{X}}_{k}) \wedge \alpha h^{-1/2})^3 \\
&\le  C h^{3/2} \bigl(U^3(\x) + Khk) \;,
\end{equs}
for some constant $K>0$. The claim now follows at once by summing over $k$.
\end{proof}


\section{Local Drift Conditions}
\label{app:localdriftcondition}

This section shows that the single-step accuracy of MALA and forward Euler imply that 
these algorithms preserve Lyapunov functions of the true solution locally.   
We refer to this property of a numerical method as a local drift condition.  In the lemmas that 
follow local drift  conditions are derived for the MALA and forward Euler algorithms.  
Deriving such drift conditions requires adapting Theorem 7.2 
of \cite{MaStHi2002} to Lyapunov functions that are neither globally 
Lipschitz nor essentially quadratic.   Still, the proofs in this section are strongly inspired
by the results in \cite{MaStHi2002}.

A key technical issue addressed below is that the natural Lyapunov function of the true solution,
namely $\Phi(\x) = \exp(\theta U(\x))$ grows so fast that it is not in general integrable with respect to
a Gaussian measure. In particular, it is not integrable with respect to  the transition probabilities of forward Euler.   
Nevertheless, 
we will show that the expectation of $\Phi$ under one step of MALA is finite and close to 
the expectation of $\Phi$ under the true solution.  Integrability of $\Phi$ with respect to the 
transition probability of MALA is a consequence of MALA preserving an equilibrium 
measure whose tails are lighter than Gaussian.

A first remark which will be useful in this section is that under our assumptions on the potential $U$,
it does not behave `worse than exponential' in the following sense:

\begin{lemma}\label{lem:Uexp}
There exists $C>0$ such that for every $\boldsymbol{x}, \boldsymbol{y} \in \R^n$, we have
\begin{equ}
|U( \boldsymbol{x} )| \le |U( \boldsymbol{y})| \exp \bigl( C | \boldsymbol{x}- \boldsymbol{y} |\bigr)\;.
\end{equ}
\end{lemma}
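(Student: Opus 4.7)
The plan is to exploit Assumption~\ref{sa}~(D), which provides $|\nabla U(\x)| \le C U(\x)$, together with the positivity $U \ge 1$ from Assumption~\ref{sa}~(A). The natural idea is a Grönwall-type estimate along the straight line joining $\boldsymbol{y}$ to $\boldsymbol{x}$.

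Concretely, I would define $f \colon [0,1] \to \R$ by $f(t) = U\bigl(\boldsymbol{y} + t(\boldsymbol{x}-\boldsymbol{y})\bigr)$. Since $U \ge 1 > 0$, the function $f$ is strictly positive and smooth, so $\log f$ is well-defined and differentiable. Computing the derivative and applying Cauchy–Schwarz then Assumption~\ref{sa}~(D) yields
\begin{equation*}
|f'(t)| = \bigl|\nabla U(\boldsymbol{y}+t(\boldsymbol{x}-\boldsymbol{y})) \cdot (\boldsymbol{x}-\boldsymbol{y})\bigr| \le C\, f(t)\, |\boldsymbol{x}-\boldsymbol{y}|\;,
\end{equation*}
so that $\bigl|\frac{d}{dt}\log f(t)\bigr| \le C |\boldsymbol{x}-\boldsymbol{y}|$.

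Integrating this bound from $0$ to $1$ gives $|\log U(\boldsymbol{x}) - \log U(\boldsymbol{y})| \le C |\boldsymbol{x}-\boldsymbol{y}|$, which exponentiates to $U(\boldsymbol{x}) \le U(\boldsymbol{y}) \exp(C|\boldsymbol{x}-\boldsymbol{y}|)$. Because $U \ge 1$ throughout, the absolute value signs in the statement are cosmetic, and the claim follows. There is really no obstacle here — the only point to double-check is that $U$ stays strictly positive along the whole segment (guaranteed by $U \ge 1$), which is what makes the division by $f(t)$ in the logarithmic derivative legitimate.
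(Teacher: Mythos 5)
Your proof is correct and follows essentially the same route as the paper, which simply says to differentiate $t \mapsto U((1-t)\boldsymbol{x} + t\boldsymbol{y})$, invoke Assumption~\ref{sa}~(D), and apply Gronwall's inequality on $[0,1]$. Your logarithmic-derivative computation is just a spelled-out version of that Gronwall step, and your remark that $U \ge 1$ makes the division legitimate is the right (minor) point to check.
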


\begin{proof}
It suffices to differentiate the function $t \mapsto U\bigl((1-t) \boldsymbol{x} + t \boldsymbol{y}\bigr)$,
invoke Assumption~\ref{sa}~(D), and apply Gronwall's inequality over the interval $t \in [0,1]$.
\end{proof}


\begin{proposition}\label{MALAlocaldrift}
Set $\Phi(\boldsymbol{x}) = \exp(\theta U(\boldsymbol{x}) )$.  Let
$\boldsymbol{X}_1$ denote MALA after one step.  Then there exist positive constants $C$ 
and $\theta \in (0, \beta)$ such that the bound
\[
 \E^{\boldsymbol{x}} \left( \Phi(\boldsymbol{X}_1) \right)  \le 
 ( e^{-\gamma h} + C U^4(\boldsymbol{x}) h^2) \Phi(\boldsymbol{x}) 
 + \frac{K}{\gamma} (1- e^{-\gamma h})
 \]
holds for all $\boldsymbol{x} \in \R^n$ satisfying $U(\boldsymbol{x}) < h^{-1/2}$.
\end{proposition}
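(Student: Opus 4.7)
The plan is to prove this single-step drift estimate by comparison with one step of the exact SDE. Lemma~\ref{lem:driftGen} gives $\E^{\boldsymbol{x}}\Phi(\boldsymbol{Y}(h)) \le e^{-\gamma h}\Phi(\boldsymbol{x}) + (K/\gamma)(1-e^{-\gamma h})$, so it suffices to establish a single-step weak-error bound of the form $|\E^{\boldsymbol{x}}\Phi(\boldsymbol{X}_1) - \E^{\boldsymbol{x}}\Phi(\boldsymbol{Y}(h))| \le C U^4(\boldsymbol{x}) h^2 \Phi(\boldsymbol{x})$. Starting from the identity
\[
\E^{\boldsymbol{x}}\Phi(\boldsymbol{X}_1) = \Phi(\boldsymbol{x}) + \int \alpha_h(\boldsymbol{x},\boldsymbol{y})\bigl(\Phi(\boldsymbol{y})-\Phi(\boldsymbol{x})\bigr) q_h(\boldsymbol{x},\boldsymbol{y})\,d\boldsymbol{y}\;,
\]
I would Taylor-expand $\Phi(\boldsymbol{y})-\Phi(\boldsymbol{x})$ to third order with a fourth-order remainder. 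Direct computation from $\Phi = e^{\theta U}$ combined with Assumption~\ref{sa}~(D) yields the pointwise bound $\|D^k\Phi(\boldsymbol{x})\| \le C U^k(\boldsymbol{x}) \Phi(\boldsymbol{x})$ for $k = 1,\ldots,4$. Taking Gaussian moments of the shifted proposal $\boldsymbol{y}-\boldsymbol{x} = -h\nabla U(\boldsymbol{x}) + \sqrt{2\beta^{-1}h}\boldsymbol{\xi}$, the first two Taylor terms (integrated against $q_h(\boldsymbol{x},\cdot)$ without the acceptance factor) produce exactly $h\CL\Phi(\boldsymbol{x}) + O(h^2)$; the third-order moment is $O(h^2)$ since pure Gaussian odd moments vanish and only drift-fluctuation cross terms survive; and the fourth-order remainder is bounded by $O(h^2 U^4(\boldsymbol{x}) \Phi(\boldsymbol{x}))$ via $\E|\boldsymbol{y}-\boldsymbol{x}|^4 = O(h^2)$.

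The principal technical difficulty is that $\Phi$ grows faster than any polynomial while $q_h(\boldsymbol{x},\cdot)$ is Gaussian, so $\int \Phi(\boldsymbol{y}) q_h(\boldsymbol{x},\boldsymbol{y})\,d\boldsymbol{y}$ is divergent and one cannot integrate the Taylor series against the bare proposal; the acceptance factor $\alpha_h$ restores integrability and must be exploited carefully. I would split the integral at the level set $\{\boldsymbol{y}: U(\boldsymbol{y}) \le 2 U(\boldsymbol{x})\}$. On this bulk region $\Phi(\boldsymbol{y}) \le \Phi(\boldsymbol{x})^2$, all moment estimates are legitimate, and the error from replacing $\alpha_h$ by $1$ is controlled via Cauchy--Schwarz together with the single-step rejection estimate $\E|1-\alpha_h| = O(h^{3/2} U^3)$ underlying Lemma~\ref{MALAstagnationprobability} (obtained by Taylor-expanding the explicit detailed-balance formula for $\alpha_h$). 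On the tail region $\{U(\boldsymbol{y}) > 2 U(\boldsymbol{x})\}$ I would use the pointwise inequality $\alpha_h(\boldsymbol{x},\boldsymbol{y}) q_h(\boldsymbol{x},\boldsymbol{y}) \le q_h(\boldsymbol{y},\boldsymbol{x}) e^{-\beta(U(\boldsymbol{y})-U(\boldsymbol{x}))}$, yielding
\[
\int_{U(\boldsymbol{y})>2U(\boldsymbol{x})} \alpha_h \Phi(\boldsymbol{y}) q_h(\boldsymbol{x},\boldsymbol{y})\,d\boldsymbol{y} \le e^{\beta U(\boldsymbol{x})} \int e^{(\theta-\beta)U(\boldsymbol{y})} q_h(\boldsymbol{y},\boldsymbol{x})\,d\boldsymbol{y}\;,
\]
which is finite (since $\theta<\beta$ and $U$ grows at least quadratically by Assumption~\ref{sa}~(A)) and exponentially small in $U(\boldsymbol{x})$. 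The hypothesis $U(\boldsymbol{x}) < h^{-1/2}$ enters precisely to keep prefactors such as $h|\nabla U(\boldsymbol{x})|^2 \le C h U^2(\boldsymbol{x})$ uniformly bounded.

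Assembling everything yields $\E^{\boldsymbol{x}}\Phi(\boldsymbol{X}_1) \le \Phi(\boldsymbol{x}) + h\CL\Phi(\boldsymbol{x}) + C h^2 U^4(\boldsymbol{x}) \Phi(\boldsymbol{x})$. Inserting $\CL\Phi \le -\gamma\Phi + K$ from Lemma~\ref{lem:driftGen} and using the elementary estimates $1-\gamma h \le e^{-\gamma h}$ and $Kh \le (K/\gamma)(1-e^{-\gamma h}) + O(h^2)$ (the latter $O(h^2)$ absorbed into the $U^4 h^2$ error, since $U \ge 1$) produces the stated inequality. The main obstacle I anticipate lies in the joint handling of integrability and accuracy: one must extract the $h^2 U^4$ smallness \emph{simultaneously} on the bulk (where Taylor applies but the $(1-\alpha_h)$-correction must be weighed against the rapidly growing derivatives $\|D^k\Phi\| \sim U^k\Phi$) and on the tail (where Taylor fails but the acceptance ratio provides an exponential cutoff), uniformly in $\boldsymbol{x}$ up to $U(\boldsymbol{x}) = h^{-1/2}$.
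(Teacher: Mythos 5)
Your overall architecture coincides with the paper's: the proof there also writes $\E^{\boldsymbol{x}}\Phi(\boldsymbol{X}_1) = \E^{\boldsymbol{x}}\Phi(\boldsymbol{Y}(h)) + \E^{\boldsymbol{x}}\bigl(\Phi(\boldsymbol{X}_1)-\Phi(\boldsymbol{Y}(h))\bigr)$, absorbs the first term with the Lyapunov estimate for the exact flow, and reduces everything to the weak-error bound of Lemma~\ref{MALAphiaccuracy}, which is proved by a fourth-order Taylor expansion weighted by $\alpha_h$, odd-moment cancellations, the rejection estimate of Lemma~\ref{MALAstagnationprobability}, and the detailed-balance reversal $\alpha_h(\boldsymbol{x},\boldsymbol{y})q_h(\boldsymbol{x},\boldsymbol{y})\le q_h(\boldsymbol{y},\boldsymbol{x})e^{\beta(U(\boldsymbol{x})-U(\boldsymbol{y}))}$ to restore integrability. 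You have correctly identified all of these ingredients. However, two of your quantitative steps do not close as stated.

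First, on the bulk region $\{U(\boldsymbol{y})\le 2U(\boldsymbol{x})\}$ the bound $\Phi(\boldsymbol{y})\le\Phi(\boldsymbol{x})^2$ is useless at the top of the admissible range: for $U(\boldsymbol{x})$ of order $h^{-1/2}$ one has $\Phi(\boldsymbol{x})^2 = \Phi(\boldsymbol{x})e^{\theta U(\boldsymbol{x})}\approx\Phi(\boldsymbol{x})e^{\theta h^{-1/2}}$, which cannot be absorbed into $Ch^2U^4(\boldsymbol{x})\Phi(\boldsymbol{x})$. What actually saves the day is that on the support of the proposal one has $|\boldsymbol{y}-\boldsymbol{x}|\le C\sqrt h(1+|\boldsymbol{\xi}|)$, so Lemma~\ref{lem:Uexp} gives $U(\boldsymbol{y})-U(\boldsymbol{x})\le CU(\boldsymbol{x})|\boldsymbol{y}-\boldsymbol{x}|\le C(1+|\boldsymbol{\xi}|)$ \emph{precisely because} $U(\boldsymbol{x})<h^{-1/2}$, whence $\Phi(\boldsymbol{y})\le\Phi(\boldsymbol{x})e^{C(1+|\boldsymbol{\xi}|)}$ with a Gaussian-integrable correction. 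For the fourth-order remainder, which is evaluated at an interior point of the segment $[\boldsymbol{x},\boldsymbol{y}]$ rather than at an endpoint, the paper additionally needs the almost-convexity estimate \eqref{e:almostconvex} derived from Assumption~\ref{sa}~(C); your write-up does not address the interior evaluation point at all.

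Second, the tail estimate is not strong enough. The conclusion that the tail contribution is ``finite and exponentially small in $U(\boldsymbol{x})$'' does not imply the required $O\bigl(h^2U^4(\boldsymbol{x})\Phi(\boldsymbol{x})\bigr)$: since $U(\boldsymbol{x})$ may be of order one, exponential smallness in $U(\boldsymbol{x})$ yields only an $O(1)$ bound, and moreover the reversed kernel $q_h(\boldsymbol{y},\boldsymbol{x})$ carries a normalization $h^{-n/2}$ that your estimate leaves uncompensated. Splitting at $U(\boldsymbol{y})=2U(\boldsymbol{x})$ is the wrong threshold. The paper instead cuts at $U(\boldsymbol{y})=\alpha h^{-1/2}$, so that on the far region the leftover factor $\int_{U(\boldsymbol{y})\ge\alpha h^{-1/2}}e^{-\delta U(\boldsymbol{y})}\,d\boldsymbol{y}\le e^{-c h^{-1/2}}$ (cf.\ \eqref{e:boundHigh}) beats both $h^{-n/2}$ and the prefactor $e^{\beta U(\boldsymbol{x})}\le e^{\beta h^{-1/2}}$, while on the intermediate band $U(\boldsymbol{x})\le U(\boldsymbol{y})\le\alpha h^{-1/2}$ the bound $h|\nabla U(\boldsymbol{y})|^2\le C\alpha^2$ permits the Gaussian domination \eqref{e:boundqhfinal}, and the sign of $(\beta-2\theta)\bigl(U(\boldsymbol{x})-U(\boldsymbol{y})\bigr)$ closes the estimate. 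You would need to redo your tail analysis along these lines.
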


\begin{proof}
Denoting by $\boldsymbol{Y}(h)$ the true solution after time $h$, we write   \[
 \E^{\boldsymbol{x}} \left( \Phi(\boldsymbol{X}_1) \right) =
  \E^{\boldsymbol{x}} \left( \Phi(\boldsymbol{Y}(h)) \right) +
  \E^{\boldsymbol{x}} \left( \Phi(\boldsymbol{X}_1) - \Phi(\boldsymbol{Y}(h)) \right) \;.
\]
We know from \eref{e:LyapPhi} that $\Phi$ is a Lyapunov function
for the true solution, and hence, \[
 \E^{\boldsymbol{x}} \left( \Phi(\boldsymbol{X}_1) \right) \le
  e^{-\gamma h} \Phi(\boldsymbol{x}) + \frac{K}{\gamma} (1-e^{-\gamma h}) +
 | \E^{\boldsymbol{x}} \left( \Phi(\boldsymbol{X}_1) - \Phi(\boldsymbol{Y}(h)) \right) | \;.
\]
The approximation result between MALA and the true solution
given in Lemma~\ref{MALAphiaccuracy} below then implies the desired result.
\end{proof}

The following lemma states that the single step error of MALA in 
preserving $\Phi$ is $\CO(h^2)$ with an error constant that depends 
on $\Phi(\boldsymbol{y})$ and $U^4(\boldsymbol{y})$ evaluated at the 
initial condition.


\begin{lemma} \label{MALAphiaccuracy}
Set $\Phi(\boldsymbol{x}) = \exp(\theta U(\boldsymbol{x}) )$.  Let
$\boldsymbol{X}_1$ and $\boldsymbol{Y}(h)$ denote MALA and the true
solution after one step, respectively.  Then there exist positive constants $C$ 
and $\theta \in (0, \beta)$ such that the bound 
\begin{equ}[e:errorPhiMALA]
| \E^{\boldsymbol{x}} \left( \Phi(\boldsymbol{X}_1) - \Phi(\boldsymbol{Y}(h)) \right) |
 \le C U^4(\boldsymbol{x})  \Phi(\boldsymbol{x})   h^2
\end{equ}
 holds for all $\boldsymbol{x} \in \R^n$  satisfying $U(\boldsymbol{x}) < h^{-1/2}$.
\end{lemma}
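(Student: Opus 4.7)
The plan is to compute both $\E^{\boldsymbol{x}}\Phi(\boldsymbol{X}_1)$ and $\E^{\boldsymbol{x}}\Phi(\boldsymbol{Y}(h))$ as $\Phi(\boldsymbol{x}) + h\,\CL\Phi(\boldsymbol{x})$ plus a remainder of size $Ch^2 U^4(\boldsymbol{x})\Phi(\boldsymbol{x})$, so that \eref{e:errorPhiMALA} follows by the triangle inequality. For the SDE side, two applications of It\^o's formula give
\begin{equ}
\E^{\boldsymbol{x}}\Phi(\boldsymbol{Y}(h)) - \Phi(\boldsymbol{x}) - h\,\CL\Phi(\boldsymbol{x}) = \int_0^h\!\!\int_0^s \E^{\boldsymbol{x}}\bigl[\CL^2\Phi(\boldsymbol{Y}(u))\bigr]\,du\,ds\;,
\end{equ}
and a direct computation from $\nabla\Phi = \theta\Phi\,\nabla U$ together with Assumption~\ref{sa}~(D) yields the pointwise bound $|\CL^2\Phi|\le CU^4\Phi$. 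Choosing $\theta<\beta-c$ and applying Lemma~\ref{lem:driftGen} to $\Theta(u) = u^4 e^{\theta u}$ (admissible by Remark~\ref{SDEdriftcondition2}) then bounds $\E^{\boldsymbol{x}}[U^4\Phi(\boldsymbol{Y}(u))]$ uniformly on $[0,h]$ by $CU^4(\boldsymbol{x})\Phi(\boldsymbol{x})$, making the double integral $\CO(h^2 U^4\Phi)$.

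For MALA, one starts from the identity
\begin{equ}
\E^{\boldsymbol{x}}\Phi(\boldsymbol{X}_1) - \Phi(\boldsymbol{x}) = \E^{\boldsymbol{x}}\bigl[\alpha_h(\boldsymbol{x},\tilde{\boldsymbol{X}}_1)\bigl(\Phi(\tilde{\boldsymbol{X}}_1) - \Phi(\boldsymbol{x})\bigr)\bigr]\;,
\end{equ}
with Euler increment $\boldsymbol{\Delta} = -h\nabla U(\boldsymbol{x}) + \sqrt{2\beta^{-1}h}\,\boldsymbol{\xi}$ and $\boldsymbol{\xi}\sim \CN(0,\mathbf{I})$. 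I would split this expectation with the event $G = \{|\boldsymbol{\xi}|\le M\}$ for a suitably chosen $M$. Under the hypothesis $U(\boldsymbol{x})<h^{-1/2}$, on $G$ one has $|\boldsymbol{\Delta}|\le CMh^{1/2}$, and Lemma~\ref{lem:Uexp} combined with Assumption~\ref{sa}~(D) yields $\|D^k\Phi(\boldsymbol{x}+t\boldsymbol{\Delta})\|\le C_M U^k(\boldsymbol{x})\Phi(\boldsymbol{x})$ for $k\le 4$ and $t\in[0,1]$. Taylor-expanding $\Phi(\tilde{\boldsymbol{X}}_1)$ around $\boldsymbol{x}$ to fourth order on $G$ and taking Gaussian expectation, the first- and second-order contributions combine into precisely $h\CL\Phi(\boldsymbol{x})$ via $\E\boldsymbol{\Delta} = -h\nabla U$ and $\E[\boldsymbol{\Delta}\otimes\boldsymbol{\Delta}] = 2\beta^{-1}h\,\mathbf{I} + h^2\nabla U\otimes \nabla U$; all remaining terms (the $\tfrac{h^2}{2}D^2\Phi[\nabla U,\nabla U]$ piece, the $\CO(h^2)$ third moments of $\boldsymbol{\Delta}$, and the fourth-order Taylor remainder with $\E|\boldsymbol{\Delta}|^4 = \CO(h^2)$) are each bounded by $Ch^2 U^4(\boldsymbol{x})\Phi(\boldsymbol{x})$. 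A direct Taylor expansion of the Metropolis log-ratio similarly shows that $|1-\alpha_h|$ is sufficiently small on $G$ that replacing $\alpha_h$ by $1$ costs only a further $\CO(h^2 U^4\Phi)$.

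The main obstacle lies in handling the complement $G^c$, where direct Taylor expansion is unavailable because $\Phi$ in general fails to be integrable against the Euler Gaussian---precisely the issue emphasised in the introduction. The way around it is the detailed-balance inequality $\alpha_h(\boldsymbol{x},\boldsymbol{y})q_h(\boldsymbol{x},\boldsymbol{y})\le q_h(\boldsymbol{y},\boldsymbol{x})\pi(\boldsymbol{y})/\pi(\boldsymbol{x})$, which together with $\theta<\beta$ gives the pointwise integrand bound
\begin{equ}
\alpha_h(\boldsymbol{x},\boldsymbol{y})\,q_h(\boldsymbol{x},\boldsymbol{y})\,\Phi(\boldsymbol{y}) \le q_h(\boldsymbol{y},\boldsymbol{x})\,e^{\beta U(\boldsymbol{x}) - (\beta-\theta) U(\boldsymbol{y})}\;,
\end{equ}
restoring uniform integrability via the factor $e^{-(\beta-\theta)U(\boldsymbol{y})}$. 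Combining this with the Gaussian tail $\P(G^c)\le Ce^{-M^2/2}$, the quadratic growth $U(\boldsymbol{y})\ge c|\boldsymbol{y}|^2$ at infinity from Assumption~\ref{sa}~(A), and choosing $M$ to grow suitably with $h$ shows that the $G^c$ contribution is negligible compared with $h^2 U^4\Phi$. Summing the SDE side with the $G$ and $G^c$ pieces of the MALA side yields \eref{e:errorPhiMALA}. The $U^4$ prefactor is sharp in this approach: it arises from the fourth-order Taylor truncation and matches the pointwise bound $|\CL^2\Phi|\le CU^4\Phi$ on the SDE side.
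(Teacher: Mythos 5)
Your architecture matches the paper's at the top level: It\^o--Taylor expansions on both sides matched through order $h$, the pointwise bound $|\CL^2\Phi|\le CU^4\Phi$ plus the Lyapunov property for the SDE remainder, and the reversibility bound $\alpha_h(\x,\y)q_h(\x,\y)\Phi(\y)\le q_h(\y,\x)e^{\beta U(\x)-(\beta-\theta)U(\y)}$ as the device that restores integrability of $\Phi$ against the accepted-proposal kernel. The SDE half and the identification of the $\CO(h)$ terms are fine. The gap is in how you dispose of the two ``bad'' contributions, and it is quantitative, not cosmetic. The truncation $G=\{|\boldsymbol{\xi}|\le M\}$ creates a trade-off you cannot win: to make the $G^c$ contribution smaller than $h^2U^4(\x)\Phi(\x)$ you need $\P(G^c)\lesssim h^2$, hence $M\gtrsim\sqrt{\log(1/h)}$; but on $G$ the fourth-order remainder involves $\Phi$ at displaced points, and by Lemma~\ref{lem:Uexp} the best available bound is $\Phi(\x+t\boldsymbol{\Delta})\le\Phi(\x)\exp\bigl(C\theta U(\x)M\sqrt h\bigr)$, which for $U(\x)$ of order $h^{-1/2}$ is a factor $e^{CM}$, not $\CO(1)$. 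So a growing $M$ costs at least $e^{C\sqrt{\log(1/h)}}$ against the claimed $h^2$, while a fixed $M$ leaves the tail non-negligible. The paper avoids this by never truncating in $\boldsymbol{\xi}$: the low-order Taylor terms involve derivatives of $\Phi$ only at the base point $\x$ and are controlled by Cauchy--Schwarz against $\E|1-\alpha_h|^2\le CU^4(\x)h^3$ (Lemma~\ref{MALAstagnationprobability}); only the integral remainder sees $\Phi$ at displaced points, and there the almost-convexity consequence of Assumption~\ref{sa}~(C) is used to push the evaluation point out to $\boldsymbol{X}_1^\star$ before applying detailed balance.

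The second, more serious issue is that your disposal of the region where detailed balance is actually invoked is not justified as written. After the reversibility bound the integrand is $q_h(\y,\x)\,e^{\beta U(\x)-(\beta-\theta)U(\y)}$, with $e^{\beta U(\x)}$ as large as $e^{\beta h^{-1/2}}$, and the integration runs over the \emph{source} argument of the Euler kernel: $\int q_h(\y,\x)\,d\y$ is not a probability and is not automatically $\CO(1)$, because $|\x-\y+h\nabla U(\y)|$ can be small even when $|\x-\y|$ is large. Neither the Gaussian tail of $\boldsymbol{\xi}$ nor the quadratic growth of $U$ addresses this. The paper's fix --- after first reducing to the set $U(\y)\ge U(\x)$ using $\alpha_h\le1$ and monotonicity --- is the split $\CR_1=\{U(\y)\le\alpha h^{-1/2}\}$, $\CR_2=\{U(\y)\ge\alpha h^{-1/2}\}$: on $\CR_1$ one has $h|\nabla U(\y)|^2\le C$, so $q_h(\y,\x)\le Ch^{-n/2}e^{-\beta|\x-\y|^2/(8h)}$ is integrable in $\y$ with a constant independent of $h$; on $\CR_2$ the crude bound $q_h\le Ch^{-n/2}$ is beaten by $\int_{\CR_2}e^{-\delta U(\y)}\,d\y\le e^{-\beta\delta\alpha h^{-1/2}/2}$ with $\alpha$ chosen large enough to dominate the $e^{\beta h^{-1/2}}$ prefactor. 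Some version of this two-region argument is indispensable; without it your $G^c$ estimate does not close.
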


\begin{remark}
Note in particular that the bound \eref{e:errorPhiMALA} implies that
$\E^{\boldsymbol{x}}  \Phi(\boldsymbol{X}_1) < \infty$. This is not obvious \textit{a priori} since 
$\Phi(\x)$ grows faster than $\exp |\x|^2$ at infinity. As a consequence, this expectation is infinite 
under the proposal moves.
\end{remark}

\begin{proof}
Applying It\^o's formula twice to the exact solution yields
\begin{equ} \label{EXACTphiexpansion}
\E^{\boldsymbol{x}}  \left( \Phi(\boldsymbol{Y}(h))  \right)  = 
\Phi(\boldsymbol{x})
+ h (\mathcal{L} \Phi) (\boldsymbol{x}) 
+ h^2 \int_0^1 (1-t)\,\E^{\boldsymbol{x}} \left( \mathcal{L}^2 \Phi (\boldsymbol{Y}(h t)) \right) 
dt  \;,
\end{equ}
where $\CL$ denotes the generator as in \eref{generator}.

Setting $\boldsymbol{X}(s) = \boldsymbol{x} + s (\boldsymbol{X}_1^{\star} - \boldsymbol{x})$, we 
obtain by a simple application of Taylor's formula the following identity for the
application of one step of MALA: 
\begin{align} \label{MALAphiexpansion}
& \E^{\boldsymbol{x}}  \left( \Phi(\boldsymbol{X}_1)  \right)   = \Phi(\boldsymbol{x}) \\
& \qquad + 
\E^{\boldsymbol{x}} \left( D \Phi(\boldsymbol{x}) (\boldsymbol{X}_1^{\star} - \boldsymbol{x}) \alpha_h(\boldsymbol{x}, \boldsymbol{X}_1^{\star}) \right) 
\nonumber \\
& \qquad + \frac{1}{2} 
\E^{\boldsymbol{x}} \left( D^2 \Phi(\boldsymbol{x}) (\boldsymbol{X}_1^{\star} - \boldsymbol{x}, \boldsymbol{X}_1^{\star} - \boldsymbol{x}) 
\alpha_h(\boldsymbol{x}, \boldsymbol{X}_1^{\star}) \right) 
\nonumber  \\
& \qquad + 
\frac{1}{6} \E^{\boldsymbol{x}} \left( D^3 \Phi(\boldsymbol{x}) (\boldsymbol{X}_1^{\star} - \boldsymbol{x}, \boldsymbol{X}_1^{\star} - \boldsymbol{x},\boldsymbol{X}_1^{\star} - \boldsymbol{x} ) \alpha_h(\boldsymbol{x}, \boldsymbol{X}_1^{\star}) \right)  
\nonumber \\
& \qquad + 
{1\over 4!} \int_0^1(1-t)^4 \E^{\boldsymbol{x}} \left( D^4 \Phi(\boldsymbol{X}(t)) (\boldsymbol{X}_1^{\star} - \boldsymbol{x} )^4 \alpha_h(\boldsymbol{x}, \boldsymbol{X}_1^{\star}) \right) \, dt\;.
\nonumber
\end{align}
(Here we interpret $D^4 \Phi ( \y) ( \x )^4$
as being the quadrilinear form $D^4 \Phi ( \y)$ applied to  $( \x, \x, \x, \x)$.)  
Subtracting  \eqref{EXACTphiexpansion} from \eqref{MALAphiexpansion} and using the definition of the forward 
Euler proposal move to collect this difference in powers of $h$, we obtain:
 \begin{equation} \label{MALAphierror}
 \E^{\boldsymbol{x}}  \left( \Phi(\boldsymbol{X}_1)  - \Phi(\boldsymbol{Y}(h)) \right)  = 
 h^{1/2} I_{1/2}  + h I_1 + h^{3/2} I_{3/2} + h^2 I_2  + R_2 \;.
 \end{equation}
 Here we have introduced:
 \begin{equs}
I_{1/2} =&  
\sqrt{2 \beta^{-1}} \E^{\boldsymbol{x}} \left( \alpha_h(\boldsymbol{x}, \boldsymbol{X}_1^{\star}) 
 D \Phi(\boldsymbol{x}) \boldsymbol{\xi}  \right) \\
I_1 =& -   \theta \Phi(\boldsymbol{x}) | \nabla U(\boldsymbol{x}) |^2  \E^{\boldsymbol{x}} \left( \alpha_h(\boldsymbol{x}, \boldsymbol{X}_1^{\star}) - 1 \right)  \\
& +  \beta^{-1} \E^{\boldsymbol{x}} \left( D^2 \Phi(\boldsymbol{x}) (\boldsymbol{\xi}, \boldsymbol{\xi}) (\alpha_h(\boldsymbol{x}, \boldsymbol{X}_1^{\star}) -1) \right)  
 \\
I_{3/2}= & 
 -  \sqrt{2 \beta^{-1}} \E^{\boldsymbol{x}} \left( D^2 \Phi(\boldsymbol{x}) (\boldsymbol{\xi}, \nabla U(\boldsymbol{x}) ) \alpha_h(\boldsymbol{x}, \boldsymbol{X}_1^{\star})  \right) \\  
&+ \frac{1}{6} \left( (2 \beta^{-1})^{3/2} \E^{\boldsymbol{x}} \left( D^3 \Phi(\boldsymbol{x}) (\boldsymbol{\xi}, \boldsymbol{\xi}, \boldsymbol{\xi})
    \alpha_h(\boldsymbol{x}, \boldsymbol{X}_1^{\star})  \right)  \right)
\\ 
I_2 = & \frac{1}{2}  \left(
\E^{\boldsymbol{x}} \left( D^2 \Phi(\boldsymbol{x}) (\nabla U(\boldsymbol{x}), \nabla U(\boldsymbol{x}) )
\alpha_h(\boldsymbol{x}, \boldsymbol{X}_1^{\star}) \right)  \right) 
 \\
& - \frac{1}{6} \left( \E^{\boldsymbol{x}} \left( D^3 \Phi(\boldsymbol{x}) ( \nabla U(\boldsymbol{x}), 
(-\sqrt{h} \nabla U(\boldsymbol{x}) + \sqrt{2 \beta^{-1}} \boldsymbol{\xi})^2) \alpha_h(\boldsymbol{x}, \boldsymbol{X}_1^{\star}) \right)  \right)
 \\
&  - 
\frac{1}{6} \left( \sqrt{2 \beta^{-1}} \E^{\boldsymbol{x}} \left( D^3 \Phi(\boldsymbol{x}) ( \boldsymbol{\xi},  \nabla U(\boldsymbol{x}),
-\sqrt{h} \nabla U(\boldsymbol{x}) + \sqrt{2 \beta^{-1}} \boldsymbol{\xi}) \alpha_h(\boldsymbol{x}, \boldsymbol{X}_1^{\star}) \right)  \right) 
 \\
&  -  
\frac{1}{6} \left( 2 \beta^{-1} \E^{\boldsymbol{x}} \left( 
D^3 \Phi(\boldsymbol{x}) ( \boldsymbol{\xi}, \boldsymbol{\xi},
 \nabla U(\boldsymbol{x}) ) \alpha_h(\boldsymbol{x}, \boldsymbol{X}_1^{\star}) \right)  \right) 
 \\
& +  \int_0^1 (1-t) \E^{\boldsymbol{x}} \left( \mathcal{L}^2 \Phi (\boldsymbol{Y}(h t)) \right)  dt  
 \\
R_2 = &
{1\over 4!} \int_0^1(1-t)^4 
\E^{\boldsymbol{x}} \left( 
D^4 \Phi(\boldsymbol{X}(t)) (\boldsymbol{X}_1^{\star} - \boldsymbol{x} )^4 \alpha_h(\boldsymbol{x}, \boldsymbol{X}_1^{\star}) 
\right)
\;.
\end{equs}
We now bound each of these terms separately.   The estimates that follow will often rely on
the hypothesis that $U(\boldsymbol{x}) < 1/\sqrt{h}$ together with Assumption~\ref{sa}~(D) 
which implies that  the $\ell$th derivative of $\Phi$ satisfies: 
\begin{equation} \label{DellPhibnd}
\| D^{\ell} \Phi (\boldsymbol{x}) \| \le C U^{\ell}(\boldsymbol{x}) \Phi(\boldsymbol{x})\;,
\end{equation}
for $\ell=1,2,3, 4$.

Since the term $I_{1/2}$ in \eqref{MALAphierror} involves an odd function of $\boldsymbol{\xi}$, 
one can rewrite it as  \[
\frac{I_{1/2}}{ \sqrt{2 \beta^{-1}} }= 
 \E^{\boldsymbol{x}} \left( \alpha_h(\boldsymbol{x}, \boldsymbol{X}_1^{\star}) 
  D \Phi(\boldsymbol{x}) \boldsymbol{\xi}  \right) = 
  \E^{\boldsymbol{x}} \left( (\alpha_h(\boldsymbol{x}, \boldsymbol{X}_1^{\star}) - 1)
D \Phi(\boldsymbol{x}) \boldsymbol{\xi}  \right) \;.
\]
Using \eqref{DellPhibnd}, we infer that 
\begin{equ}
| I_{1/2}|  \le  C U(\boldsymbol{x}) \Phi(\boldsymbol{x})  
 \E^{\boldsymbol{x}} \left( | 1- \alpha_h(\boldsymbol{x}, \boldsymbol{X}_1^{\star}) |^2 \right)^{1/2}   
  \le \tilde{C} U^3(\boldsymbol{x}) \Phi(\boldsymbol{x})   h^{3/2}   \;,
\end{equ}
where we used Lemma~\ref{MALAstagnationprobability} in the last inequality.  One can similarly bound $I_{3/2}$ 
since it also involves an odd function of $\boldsymbol{\xi}$.  The term $I_1$ is of
the form where Lemma~\ref{MALAstagnationprobability} can be directly applied
after using the Cauchy-Schwarz inequality and Assumption~\ref{sa}~(D).
The  terms in $I_2$ without integrals are bounded in a similar fashion, but 
without the need to invoke Lemma~\ref{MALAstagnationprobability}.

Note now that 
\[
| \mathcal{L}^2 \Phi (\boldsymbol{y}) | \le C U^{4}(\boldsymbol{y}) \Phi(\boldsymbol{y}) \;,
\]
which is a Lyapunov function for the true solution by Remark~\ref{SDEdriftcondition2},
so that the integrand 
appearing in $I_2$ is bounded by: 
\begin{equation}  \label{solnremainderbnd}
| \E^{\boldsymbol{x}} \left( \mathcal{L}^2 \Phi (\boldsymbol{Y}(r)) \right) | \le
C \E^{\boldsymbol{x}} \left( U^4(\boldsymbol{Y}(r)) \Phi(\boldsymbol{Y}(r)) \right) \le 
C U^4(\boldsymbol{x}) \Phi(\boldsymbol{x}) \;.
\end{equation}

Finally, we describe how to bound $R_2$ in \eqref{MALAphierror}.
It follows from  \eqref{DellPhibnd}  that \begin{align*}
R_2 &= \frac{1}{4!}  \left| \int_0^1(1-t)^4 
\E^{\boldsymbol{x}} \left(  D^4 \Phi(\boldsymbol{X}(t)) 
(\boldsymbol{X}_1^{\star} - \boldsymbol{x} )^4 \alpha_h(\boldsymbol{x}, \boldsymbol{X}_1^{\star}) \right) \right| \\
& \le 
C \E^{\boldsymbol{x}} \left( 
 U^4(\boldsymbol{X}(s)) \Phi(\boldsymbol{X}(s)) |\boldsymbol{X}_1^{\star} - \boldsymbol{x}|^4
 \alpha_h(\boldsymbol{x}, \boldsymbol{X}_1^{\star})
 \right) \;,
\end{align*}
so that our claim follows if we can show that the bound
\begin{equ}
\label{quarticintegrandbnd}
\E
 U^{4}(\boldsymbol{X}(s)) \Phi(\boldsymbol{X}(s)) |\boldsymbol{X}^{\star}( \boldsymbol{\xi}) - \boldsymbol{x}|^4
 \alpha_h(\boldsymbol{x}, \boldsymbol{X}^{\star}( \boldsymbol{\xi}) )  \le C U^{4}(\boldsymbol{x}) \Phi(\boldsymbol{x}) h^2 \;,
\end{equ}
holds uniformly for $s \in [0,1]$,
where $\boldsymbol \xi$ is a normally distributed random variable.
Here, we have introduced the shorthand notation
\begin{equ}
\boldsymbol{X}^\star(\boldsymbol{\xi}) = \boldsymbol{x} - h \nabla U(\boldsymbol{x}) + \sqrt{2 h \beta^{-1}} \boldsymbol{\xi} \;.
\end{equ}
Note that for all $\boldsymbol{x}$ satisfying $ U(\boldsymbol{x} ) < 1/ \sqrt{h}$, we have the bound
\begin{equ} \label{hyponx}
| \boldsymbol{X}^\star(\boldsymbol{\xi})  - \boldsymbol{x}| \le C\sqrt h (1+|\boldsymbol{\xi}|)\;.
\end{equ}

Hence, to prove \eqref{quarticintegrandbnd} it suffices to show that\[
\E \left( (1+ | \boldsymbol{\xi} |^4) 
 U^{4}(\boldsymbol{X}(s)) \Phi(\boldsymbol{X}(s)) \alpha_h(\boldsymbol{x}, \boldsymbol{X}^\star(\boldsymbol{\xi}))
 \right)  \le C U^{4}(\boldsymbol{x}) \Phi(\boldsymbol{x}) \;.
\]
We can then use the Cauchy-Schwarz inequality to get rid of the factor $(1+ | \boldsymbol{\xi} |^4)$,
so that it suffices to show that 
\begin{equ}[e:boundWanted]
\E \left(F_\theta (U(\boldsymbol{X}(s))) \alpha_h(\boldsymbol{x}, \boldsymbol{X}^\star(\boldsymbol{\xi}))
 \right)  \le C F_\theta(U(\x)) \;,
\end{equ}
where we defined the shorthand notation
\begin{equ}
F_\theta(u) = u^8 e^{2\theta u}\;.
\end{equ}
Our next step is to turn the occurrences of $\boldsymbol{X}(s)$ in this expression into $\boldsymbol{X}^\star(\boldsymbol{\xi})$.
In order to do this, we use the fact that Assumption~\ref{sa}~(C) implies that $U$ is `almost' convex.
Indeed, choose any $x,y \in \R^n$ and set $x_s = (1-s)x + sy$, so that one has the identity
\begin{equ}
U(x_s) = (1-s)U(x) + sU(y) + s(1-s) \int_0^1 \scal{\nabla U(x_{st})- \nabla U(x_{st + 1-t}), y-x}\,dt\;.
\end{equ}
Since $x_{st + 1-t} - x_{st} = (1-t)(y-x)$, it then follows from Assumption~\ref{sa}~(C) that
\begin{equs}
U(x_s) &\le (1-s)U(x) + sU(y) + C s (1-s) |x-y|^2 \int_0^1 (1-t) \,dt \\
&\le (1-s)U(x) + sU(y) + C |x-y|^2\;,\label{e:almostconvex}
\end{equs}
for some constant $C$ independent of $s \in [0,1]$. Note also that there exists a constant $C$ such that
the bound
\begin{equ}[e:boundFuv]
F_\theta(u+v) \le C F_\theta(u) \exp(Cv)\;,
\end{equ}
holds uniformly for all $u,v$ such that $u \ge 1$ and $v \ge 0$.

Since furthermore $F_\theta$ is convex, we deduce from \eref{e:boundFuv} and \eref{e:almostconvex}
that
\begin{equ}
F_\theta (U(\boldsymbol{X}(s))) \le 
C \exp\bigl(C|\boldsymbol{X}^\star(\boldsymbol{\xi})-\x|^2\bigr) \bigl((1-s)F_\theta(U(\x)) 
+ sF_\theta(U(\boldsymbol{X}^\star(\boldsymbol{\xi})))\bigr)\;.
\end{equ}
To bound the first term in this expression, note that it follows from \eqref{hyponx} that 
\begin{equ}
  \E  \exp\left( C | \boldsymbol{X}^\star(\boldsymbol{\xi})-\boldsymbol{x} |^2 \right)
 \le \E \exp\left( C h (1 + | \boldsymbol{\xi} |^2 )   \right) \le C\;,
\end{equ} 
 so that it is bounded by some multiple of $F_\theta(U(\x))$.

Combining this bound with \eref{e:boundWanted} and the Cauchy-Schwarz inequality, we
conclude that it remains to show that 
\begin{equ}
\E \left(F_\theta^2 (U(\boldsymbol{X}^\star(\boldsymbol{\xi}))) \alpha_h(\boldsymbol{x}, \boldsymbol{X}^\star(\boldsymbol{\xi}))
 \right)  \le C F_\theta^2(U(\boldsymbol{x})) \;.
\end{equ}
Since $\alpha_h$ is bounded, we can reduce ourselves to showing that
\begin{equ}[e:realwanted]
\E \left(F_\theta^2 (U(\boldsymbol{X}^\star(\boldsymbol{\xi}))) \alpha_h(\boldsymbol{x}, \boldsymbol{X}^\star(\boldsymbol{\xi}))\,,\;
U(\boldsymbol{X}^\star(\boldsymbol{\xi})) \ge U(\x) \right)  \le C F_\theta^2(U(\boldsymbol{x})) \;.
\end{equ}
Note now that one has from the definition \eref{MALAacceptreject} of $\alpha_h$ the bound
\begin{equ}
\alpha_h(\x,\y) \le {q_h(\y,\x) \over q_h(\x,\y)}\exp \bigl(\beta U(\x) - \beta U(\y)\bigr)\;,
\end{equ}
where $q_h$ denotes the one-step transition probabilities for forward Euler. 
The left-hand side of \eref{e:realwanted} can therefore be bounded by
\begin{equ}
\int_{U( \y) \ge U(\x)} F_\theta^2(U(\y)) q_h(\y,\x) \exp \bigl(\beta U(\x) - \beta U( \y)\bigr)\,d \y\;.
\end{equ}
We break this integral into two regions by setting
\begin{equ}
\CR_1 = \{ \y\,:\, U(\x) \le U(\y) \le \alpha h^{-1/2}\}\;,\quad 
\CR_2 = \{ \y\,:\, U(\y) \ge \alpha h^{-1/2}\}\;,
\end{equ}
for some $\alpha > 0$ to be determined.

Observe now that for $\y \in \CR_1$, one has the bound
\begin{equs}
q_h(\y, \x) &= 
(4 \pi \beta^{-1} h)^{-n/2} \exp\left( - \frac{\beta}{4h} | \x - \y + h \nabla U(\y) |^2 \right) \\
&\le
(4 \pi \beta^{-1} h)^{-n/2} \exp\left( - \frac{\beta}{8h} | \x- \y | ^2 + \frac{\beta h}{4}  | \nabla U( \y) |^2 \right) \\
&\le
C h^{-n/2} \exp\left( - \frac{\beta}{8h} | \x - \y | ^2 \right) \;,\label{e:boundqhfinal}
\end{equs}
where $C$ depends on the choice of $\alpha$, but not on $h$.
Furthermore, we have the bound
\begin{equs}
F_\theta^2(U(\y)) &\exp \bigl(\beta U(\x) - \beta U(\y)\bigr)\,d \y \label{e:boundFT} \\
& \le F_\theta^2(U(\x)) \exp\bigl(C |\x- \y| + (\beta - 2\theta) \bigl(U(\x) - U(\y)\bigr)\bigr)\;,
\end{equs}
where we have used Lemma~\ref{lem:Uexp} in order to obtain the last inequality.
Combining \eref{e:boundFT} and \eref{e:boundqhfinal} and using the fact that $U(\y) \ge U(\x)$ on $\CR_1$, 
we obtain indeed the bound
\begin{equ}
\int_{\CR_1} F_\theta^2(U( \y)) q_h(\y,\x) \exp \bigl(\beta U(\x) - \beta U(\y)\bigr)\,dy \le C F_\theta^2(U(\x))\;.
\end{equ}
Finally, in order to bound the integral over $\CR_2$, we make use of the fact that 
$q_h(\y,\x) \le C h^{-n/2}$, so that , combining this with \eref{e:boundFT}, we have the bound
\begin{equs}
\int_{\CR_2} &F_\theta^2(U(\y)) q_h(\y,\x) \exp \bigl(\beta U(\x) - \beta U(\y)\bigr)\,d \y \\
&\le C h^{-n/2} F_\theta^2(\x) \int_{\CR_2} \exp\bigl(C |\x- \y| + (\beta - 2\theta) \bigl(U(\x) - U( \y)\bigr)\bigr)\,d \y\\
&\le C h^{-n/2} F_\theta^2(\x) \exp(\beta h^{-1/2}) \int_{\CR_2} \exp\bigl(- \delta U( \y)\bigr)\,d \y\;,
\end{equs}
for some fixed constant $\delta > 0$. Here, we have made use of the fact that $U( \x) \le h^{-1/2}$ by
assumption, and that $U$ grows faster than quadratically by Assumption~\ref{sa}~(A).
It follows from \eref{e:boundHigh} and the definition of $\CR_2$ that 
\begin{equ}
\int_{\CR_2} \exp\bigl(- \delta U( \y)\bigr)\,dy\le \exp\Bigl(-{\beta \delta \alpha\over 2} h^{-1/2}\Bigr)\;,
\end{equ}
so that the requested bound follows, provided that we choose $\alpha$ sufficiently large so that $\alpha > 2\delta^{-1}$.
\end{proof}

The following lemma is useful to bound the average rejection probability of MALA.


\begin{lemma}[see also \cite{BoVa2010A}]  \label{MALAstagnationprobability}
For every $p \in \mathbb{N}$, there exists an $h_c>0$ and a constant $C >0$, such 
that for any $h<h_c$ the bound \[
 \E^{\boldsymbol{x}} \Bigl(  \left| 1 - \alpha_h(\boldsymbol{x},\boldsymbol{X}^{\star}_1) \right|^p \Bigr) 
\le C U^{2 p} (\boldsymbol{x}) h^{3 p/2} 
\]
 holds for all $\boldsymbol{x} \in \R^n$  satisfying $U(\boldsymbol{x}) < h^{-1/2}$. 
\end{lemma}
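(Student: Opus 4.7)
The plan is to bound $|1-\alpha_h(\boldsymbol{x},\boldsymbol{X}_1^\star)|$ by the absolute value of the log acceptance ratio
\begin{equation*}
R(\boldsymbol{x},\boldsymbol{y}) = \log\frac{q_h(\boldsymbol{y},\boldsymbol{x})\pi(\boldsymbol{y})}{q_h(\boldsymbol{x},\boldsymbol{y})\pi(\boldsymbol{x})}\;,
\end{equation*}
and then to show that $R$ is of order $h^{3/2}$ by exploiting a trapezoidal-rule cancellation. The elementary inequality $1-e^R \le -R$ for $R\le 0$, together with $\alpha_h = 1$ when $R\ge 0$, gives $|1-\alpha_h(\boldsymbol{x},\boldsymbol{y})| \le |R(\boldsymbol{x},\boldsymbol{y})|$, so it suffices to show that $\E^{\boldsymbol{x}}|R(\boldsymbol{x},\boldsymbol{X}_1^\star)|^p \le CU^{2p}(\boldsymbol{x})h^{3p/2}$.

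I would first write $R$ explicitly. Using \eqref{IM}, \eqref{Eulerdensity}, and setting $\Delta = \boldsymbol{y}-\boldsymbol{x}$, direct algebra yields
\begin{equation*}
R(\boldsymbol{x},\boldsymbol{y}) = -\beta\bigl(U(\boldsymbol{y})-U(\boldsymbol{x})\bigr) + \tfrac{\beta}{2}\scal{\Delta,\nabla U(\boldsymbol{x})+\nabla U(\boldsymbol{y})} - \tfrac{\beta h}{4}\bigl(|\nabla U(\boldsymbol{y})|^2-|\nabla U(\boldsymbol{x})|^2\bigr)\;.
\end{equation*}
The apparent $O(h^{1/2})$ contributions in the first two summands cancel via the trapezoidal identity
\begin{equation*}
U(\boldsymbol{y})-U(\boldsymbol{x}) = \tfrac{1}{2}\scal{\Delta,\nabla U(\boldsymbol{x})+\nabla U(\boldsymbol{y})} - \tfrac{1}{2}\int_0^1 t(1-t)\,D^3U(\boldsymbol{x}+t\Delta)(\Delta,\Delta,\Delta)\,dt\;,
\end{equation*}
so that they combine into a remainder of order $|\Delta|^3\|D^3U\|$. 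The third summand is $O(h|\Delta|\|D^2U\||\nabla U|)$ by a first-order Taylor expansion of $|\nabla U|^2$ along the segment from $\boldsymbol{x}$ to $\boldsymbol{y}$.

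Under the hypothesis $U(\boldsymbol{x})<h^{-1/2}$, Assumption~\ref{sa}(D) gives $|\nabla U(\boldsymbol{x})|\le CU(\boldsymbol{x})\le Ch^{-1/2}$, so the Euler proposal satisfies $|\Delta|\le C\sqrt{h}(1+|\boldsymbol{\xi}|)$ with $\boldsymbol{\xi}$ standard Gaussian. Combining Assumption~\ref{sa}(D) with Lemma~\ref{lem:Uexp} to transport bounds on $\|D^kU\|$ along the segment $[\boldsymbol{x},\boldsymbol{X}_1^\star]$ then yields the pointwise bound
\begin{equation*}
|R(\boldsymbol{x},\boldsymbol{X}_1^\star)|\le Ch^{3/2}U^2(\boldsymbol{x})(1+|\boldsymbol{\xi}|)^3 \exp\bigl(C\sqrt{h}(1+|\boldsymbol{\xi}|)\bigr)\;.
\end{equation*}
Raising to the $p$-th power and integrating against the Gaussian law of $\boldsymbol{\xi}$, the resulting moment is uniformly bounded for $h\le h_c$, since $\sqrt{h}\le 1$ and Gaussian random variables have finite exponential moments of every order; this gives the claim.

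The main obstacle is the cancellation in the second step: individually each of the three summands in $R$ is only $O(h^{1/2})$ because $|\Delta|\sim h^{1/2}$, and it is the specific algebraic structure of the Metropolis correction for the forward Euler proposal that combines them into an $O(h^{3/2})$ remainder (reflecting the second-order weak accuracy of the proposal). Controlling the $U$-dependence in the remainder requires the growth Assumption~\ref{sa}(D) together with the exponential transport bound of Lemma~\ref{lem:Uexp} so that all derivatives of $U$ evaluated along $[\boldsymbol{x},\boldsymbol{X}_1^\star]$ remain comparable to their values at $\boldsymbol{x}$.
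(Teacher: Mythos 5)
Your proposal is correct and follows essentially the same route as the paper: your log-acceptance-ratio $R$ is exactly $-\beta G$ for the function $G$ the paper introduces, the bound $|1-\alpha_h|\le|R|$ is the paper's $|1-(1\wedge e^{-x})|\le|x|$, and your trapezoidal cancellation plus the Taylor expansion of $|\nabla U|^2$ reproduces the paper's exact integral identity for $G$ in terms of $D^3U$ and $D^2U\cdot\nabla U$. The remaining estimates (the bound $|\Delta|\le C\sqrt h(1+|\boldsymbol{\xi}|)$ from $U(\boldsymbol{x})<h^{-1/2}$, the transport of derivative bounds along the segment via Lemma~\ref{lem:Uexp}, and the uniform Gaussian moment bound) coincide with the paper's.
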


\begin{proof}
Introduce the function $G: \mathbb{R}^n \times \mathbb{R}^n \to \mathbb{R}$ given by
\begin{align*}
G(\boldsymbol{x}, \boldsymbol{y}) &= 
   U(\boldsymbol{y}) - U(\boldsymbol{x}) 
 - \frac{1}{2} \left\langle \nabla U(\boldsymbol{y}) + \nabla U(\boldsymbol{x}), \boldsymbol{y}-\boldsymbol{x} \right\rangle \\
+& \frac{h}{4} \left( \left| \nabla U(\boldsymbol{y}) \right|^2 - \left| \nabla U(\boldsymbol{x}) \right|^2 \right)  \;,
\end{align*}
and the set
\[
R(\boldsymbol{x}) = \{ \boldsymbol{y}  \in \mathbb{R}^n~~|~~ G(\boldsymbol{x}, \boldsymbol{y}) > 1 \} \;.
\]
By \eqref{MALAacceptreject} it follows that, for $\boldsymbol \xi$ a normally distributed random variable, one has
\begin{equ}
 \E^{\boldsymbol{x}} \left| 1 - \alpha_h(\boldsymbol{x},\boldsymbol{X}^{\star}_1) \right|^p 
  = \E
\left|   1 -  \bigl(1 \wedge \exp(-\beta G(\boldsymbol{x}, \boldsymbol{X}^\star(\boldsymbol{\xi}) ) \bigr)
\right|^p \;,
\end{equ}
where we have used the shorthand notation
\begin{equ}
\boldsymbol{X}^\star(\boldsymbol{\xi}) = \boldsymbol{x} - h \nabla U(\boldsymbol{x}) + \sqrt{2 h \beta^{-1}} \boldsymbol{\xi} \;.
\end{equ}
Since $| 1 - (1\wedge e^{-x})| \le |x|$ for every $x\in \R$, it follows that
\begin{equ}
 \E^{\boldsymbol{x}} \left| 1 - \alpha_h(\boldsymbol{x},\boldsymbol{X}^{\star}_1) \right|^p 
  \le \E
\left| \beta G(\boldsymbol{x}, \boldsymbol{X}^\star(\boldsymbol{\xi}) )
\right|^p \;.
\end{equ}
Introduce the interpolant \[
\boldsymbol{X}(t) = \boldsymbol{x} - t ( h \nabla U(\boldsymbol{x}) - \sqrt{2 h \beta^{-1}} \boldsymbol{\xi} )\;,
\]
so that $\boldsymbol{X}(0) = \boldsymbol{x}$ and
$\boldsymbol{X}(1) = \boldsymbol{X}^{\star}(\boldsymbol{\xi})$.  An straightforward but 
tedious calculation yields the identity
\begin{equs}
G(\boldsymbol{x}, \boldsymbol{X}^{\star}(\boldsymbol{\xi}) ) &=  
\frac{h}{2}  \int_0^1  D^2 U(\boldsymbol{X}(t)) \bigl(\nabla U(\boldsymbol{X}(t)), \boldsymbol{X}^{\star}(\boldsymbol{\xi})- \boldsymbol{x} \bigr) dt \\
& \qquad + \frac{1}{2}  \int_0^1  t(t-1)\, D^3 U( \boldsymbol{X}(t)) (\boldsymbol{X}^{\star}(\boldsymbol{\xi})- \boldsymbol{x})^3\,dt  \;.
\end{equs}
(Here we interpret $D^3 U(x) y^3$ as being the trilinear form $D^3 U(x)$ applied to the triple $(y,y,y)$.)
Note now that for all $\boldsymbol{x}$ satisfying $ U(\boldsymbol{x} ) < 1/ \sqrt{h}$, we have the bound
\begin{equ}
|\boldsymbol{X}^{\star}(\boldsymbol{\xi})- \boldsymbol{x}| \le C\sqrt h (1+|\boldsymbol{\xi}|)\;.
\end{equ}
On the other hand, we know from Assumption~\ref{sa}~(D) that $\|D^{(k)}U(x)\| \le C U(x)$ for $k = 1,2,3$ and
it follows from Lemma~\ref{lem:Uexp} that \[
U(\boldsymbol{X}(r))
\le
\exp\left(C\sqrt{h} (1 + | \boldsymbol{\xi} |)\right) U(\boldsymbol{x})\;.  
\]
for all $r \in [0, 1]$.  Combining these bounds, we obtain
\begin{equ}
|G(\boldsymbol{x}, \boldsymbol{X}^{\star}(\boldsymbol{\xi}) ) | \le C h^{3\over 2} U^{2}( \boldsymbol{x}) (1+| \boldsymbol{\xi} |)^3
\exp\left(C\sqrt{h} (1 + | \boldsymbol{\xi} |)\right) \;,
\end{equ}
for some constant $C>0$. Since the expression involving $\boldsymbol{\xi}$ has moments of all orders that are independent of $h$,
the result follows.
\end{proof}


In the following lemmas we prove a local drift condition for forward Euler.  
As mentioned the `strong' Lyapunov function $\Phi(\boldsymbol{y})$ is not integrable 
with respect to the transition probability of forward Euler since its tails are lighter than 
Gaussian.   But $U(\boldsymbol{y})^{\ell}$ is integrable as a consequence of 
Assumption~\ref{sa}~(D) which ensures that it grows at most exponentially fast.  We will show that single-step accuracy of forward 
Euler implies that it locally inherits this weaker Lyapunov function.

\begin{lemma} \label{Eulerlocaldrift}
Let $\tilde{\boldsymbol{X}}_1$ denote forward Euler after one step.  Then there exists a 
constant $C_{\ell} >0$ such that for every $E>0$ and $\ell \in \mathbb{N}$ the bound
\[
 \E^{\boldsymbol{x}} \bigl( U^{\ell}(\tilde{\boldsymbol{X}}_1) \bigr)  \le 
 ( e^{-\gamma_{\ell} h} + C_{\ell} U(\boldsymbol{x})^2 h^2) U^{\ell} (\boldsymbol{x})
 + \frac{K_{\ell}}{\gamma_{\ell}} (1- e^{-\gamma_{\ell} h})
 \]
 holds for all $\boldsymbol{x} \in \R^n$  satisfying $U(\boldsymbol{x}) < h^{-1/2}$.
\end{lemma}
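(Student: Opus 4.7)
The strategy mirrors that of Proposition~\ref{MALAlocaldrift}, with $U^\ell$ playing the role of $\Phi$ and the rejection probability now absent. Decompose
\begin{equ}
\E^{\boldsymbol{x}}\bigl(U^\ell(\tX_1)\bigr) = \E^{\boldsymbol{x}}\bigl(U^\ell(\boldsymbol{Y}(h))\bigr) + \E^{\boldsymbol{x}}\bigl(U^\ell(\tX_1) - U^\ell(\boldsymbol{Y}(h))\bigr)\;.
\end{equ}
For the first term, Remark~\ref{SDEdriftcondition2} ensures that Lemma~\ref{lem:driftGen} applies to $\Theta(u) = u^\ell$, yielding the \textit{a priori} bound
\begin{equ}
\E^{\boldsymbol{x}}\bigl(U^\ell(\boldsymbol{Y}(h))\bigr) \le e^{-\gamma_\ell h} U^\ell(\boldsymbol{x}) + \frac{K_\ell}{\gamma_\ell}(1 - e^{-\gamma_\ell h})\;.
\end{equ}
It therefore suffices to establish the single-step weak error bound
\begin{equ}
\bigl|\E^{\boldsymbol{x}}\bigl(U^\ell(\tX_1) - U^\ell(\boldsymbol{Y}(h))\bigr)\bigr| \le C_\ell\, U^{\ell+2}(\boldsymbol{x})\, h^2\;,
\end{equ}
valid on $\{U(\boldsymbol{x}) < h^{-1/2}\}$, for then $U^{\ell+2}(\boldsymbol{x}) h^2 = U^2(\boldsymbol{x}) U^\ell(\boldsymbol{x}) h^2$ gives the error term appearing in the statement.

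To prove this weak error bound, the plan is to match two asymptotic expansions to order $h^2$. On the one hand, It\^o's formula applied twice yields
\begin{equ}
\E^{\boldsymbol{x}}\bigl(U^\ell(\boldsymbol{Y}(h))\bigr) = U^\ell(\boldsymbol{x}) + h\,\CL(U^\ell)(\boldsymbol{x}) + h^2 \int_0^1 (1-t)\,\E^{\boldsymbol{x}}\bigl(\CL^2(U^\ell)(\boldsymbol{Y}(th))\bigr)\,dt\;.
\end{equ}
On the other hand, Taylor expanding $U^\ell$ to fourth order along $\tX_1 - \boldsymbol{x} = -h\nabla U(\boldsymbol{x}) + \sqrt{2\beta^{-1} h}\,\boldsymbol{\xi}$ (with $\boldsymbol{\xi} \sim \CN(0,\mathbf{I})$) and taking expectation produces an analogous expansion whose first two orders reproduce $U^\ell(\boldsymbol{x}) + h\,\CL(U^\ell)(\boldsymbol{x})$, thanks to the vanishing of odd moments of $\boldsymbol{\xi}$ and the combination of the drift and diffusion second-moment contributions into $-h\nabla U(\boldsymbol{x})\cdot \nabla (U^\ell)(\boldsymbol{x}) + h\beta^{-1}\Delta(U^\ell)(\boldsymbol{x})$. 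The difference of the two expansions is therefore of order $h^2$, and only the coefficient in $U$ must be extracted.

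Three ingredients make this bookkeeping routine. First, induction on $k$ using Assumption~\ref{sa}~(D) and Leibniz's rule gives $\|D^k(U^\ell)(\boldsymbol{x})\| \le C_{k,\ell}\, U^\ell(\boldsymbol{x})$ for $k = 1,2,3,4$, and a direct degree-count shows $|\CL^2(U^\ell)(\boldsymbol{y})| \le C_\ell\, U^{\ell+2}(\boldsymbol{y})$. Second, Lemma~\ref{lem:driftGen} applied with $\Theta(u) = u^{\ell+2}$ bounds $\E^{\boldsymbol{x}}(U^{\ell+2}(\boldsymbol{Y}(th)))$ by a constant multiple of $U^{\ell+2}(\boldsymbol{x})$ uniformly in $t \in [0,1]$, controlling the It\^o remainder. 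Third, the fourth-order Taylor remainder for the Euler step is written as an integral of $D^4(U^\ell)(\boldsymbol{X}(t))(\tX_1 - \boldsymbol{x})^4$ along the interpolation $\boldsymbol{X}(t) = \boldsymbol{x} + t(\tX_1 - \boldsymbol{x})$; the hypothesis $U(\boldsymbol{x}) < h^{-1/2}$ combined with Assumption~\ref{sa}~(D) forces $|\tX_1 - \boldsymbol{x}| \le C\sqrt{h}(1 + |\boldsymbol{\xi}|)$, and Lemma~\ref{lem:Uexp} then yields $U^\ell(\boldsymbol{X}(t)) \le U^\ell(\boldsymbol{x})\exp\bigl(C\ell\sqrt{h}(1 + |\boldsymbol{\xi}|)\bigr)$, whose expectation against the Gaussian law of $\boldsymbol{\xi}$ is uniformly bounded for $h$ small. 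A Cauchy--Schwarz application, combined with $\E|\tX_1-\boldsymbol{x}|^8 = \CO(h^4)$, then controls this remainder by $C h^2 U^\ell(\boldsymbol{x})$.

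The main obstacle is the third ingredient: unlike in Lemma~\ref{MALAphiaccuracy}, there is no acceptance factor $\alpha_h$ to tame pathologically large Gaussian increments, and $U^\ell$ may grow faster than any Gaussian tail. The cap $U(\boldsymbol{x}) < h^{-1/2}$ is precisely what saves the day, since it forces both the deterministic Euler drift and the typical Gaussian step to be of size at most $\CO(\sqrt{h})$, so that Lemma~\ref{lem:Uexp} turns $U(\tX_1)$ into at most a slight, Gaussian-integrable multiplicative perturbation of $U(\boldsymbol{x})$. Outside this set the statement imposes no constraint, which is precisely where the analogous bound for forward Euler would genuinely fail.
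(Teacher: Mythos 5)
Your proposal is correct, and the outer structure is identical to the paper's: the same decomposition into $\E^{\boldsymbol{x}}U^\ell(\boldsymbol{Y}(h))$ plus a single-step weak error, the same Lyapunov bound for the first piece via Remark~\ref{SDEdriftcondition2}, and the same target estimate $C_\ell h^2 U^{\ell+2}(\boldsymbol{x})$ for the second (which is exactly the content of the paper's Lemma~\ref{EulerUellerror}). Where you diverge is in how that weak error is produced. The paper interprets the Euler step as the time-$h$ solution of a diffusion with frozen drift $-\nabla U(\boldsymbol{x})$, observes that its generator $\CL_h$ satisfies $\CL_h U^\ell(\boldsymbol{x}) = \CL U^\ell(\boldsymbol{x})$ at the starting point, and writes the difference as a double time-integral of $\CL_h^2 U^\ell(\boldsymbol{X}(r)) - \CL^2 U^\ell(\boldsymbol{Y}(r))$; the first-order cancellation is then automatic and no explicit moment bookkeeping for $\boldsymbol{\xi}$ is needed. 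You instead perform a fourth-order spatial Taylor expansion of $U^\ell$ along the Euler increment — precisely the technique the paper reserves for the MALA estimate in Lemma~\ref{MALAphiaccuracy} — and verify the order-$h$ cancellation by hand via the vanishing odd Gaussian moments. Both routes require controlling $D^kU^\ell$ for $k\le 4$ by $C U^\ell$ and both lean on $U(\boldsymbol{x})<h^{-1/2}$ together with Lemma~\ref{lem:Uexp} to keep $U^\ell$ along the Euler step Gaussian-integrable, so the technical cost is comparable; the paper's generator comparison is slightly more economical in bookkeeping, while your expansion makes the source of the $U^{\ell+2}h^2$ error term (the $D^2U^\ell(\nabla U,\nabla U)$ and $\CL^2U^\ell$ contributions) more visibly explicit.
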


\begin{proof}
Since
\[
U^{\ell}(\boldsymbol{x}) \le  e^{\ell C |\boldsymbol{x}|} U^{\ell}(\boldsymbol{0})
\]
by Lemma~\ref{lem:Uexp}, $U^{\ell}(\boldsymbol{x})$ is integrable with respect to 
Gaussian measures for every $\ell \in \mathbb{N}$.   
Thus, $(\tilde{P}_h U^{\ell})(\boldsymbol{x})$ is finite
(recall, $\tilde{P}_h$ is the transition probability for forward Euler).

Denoting by $\boldsymbol{Y}(h)$ the true solution after time $h$, we write
\[
 \E^{\boldsymbol{x}} \bigl( U^{\ell}(\tilde{\boldsymbol{X}}_1) \bigr) =
  \E^{\boldsymbol{x}} \bigl( U^{\ell}(\boldsymbol{Y}(h)) \bigr) +
  \E^{\boldsymbol{x}} \bigl( U^{\ell}(\tilde{\boldsymbol{X}}_1) - U^{\ell}(\boldsymbol{Y}(h)) \bigr) \;.
\]
Remark~\ref{SDEdriftcondition2} then implies that there are positive constants  $\gamma_\ell$ and $K_\ell$ such that
\[
 \E^{\boldsymbol{x}} \bigl( U^{\ell}(\tilde{\boldsymbol{X}}_1) \bigr) \le
  e^{-\gamma_{\ell} h} U^{\ell}(\boldsymbol{x}) + \frac{K_{\ell}}{\gamma_{\ell}} (1-e^{-\gamma_{\ell} h}) +
 \bigl| \E^{\boldsymbol{x}} \bigl( U^{\ell}(\tilde{\boldsymbol{X}}_1) - U^{\ell}(\boldsymbol{Y}(h)) \bigr) \bigr | \;,
\]
and the approximation result between forward Euler and the true solution
given in Lemma~\ref{EulerUellerror} below implies the desired result.
\end{proof}

An immediate corollary of this bound is given by
\begin{corollary}\label{globalEuler}
For every $\ell > 1$ there exist positive constants $\alpha_\ell$ and $K_\ell$ such that the bound
\begin{equ}
 \E^{\boldsymbol{x}} \bigl( U(\tilde{\boldsymbol{X}}_1) \wedge \alpha_\ell h^{-1/2}\bigr)^{\ell}
 \le \bigl( U(\x) \wedge \alpha_\ell h^{-1/2}\bigr)^{\ell} + h\,K_\ell\;,
\end{equ}
holds for every $\x \in \R^n$.
\end{corollary}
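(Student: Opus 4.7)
The plan is to split the state space into two regions according to whether $U(\boldsymbol{x}) < \alpha_\ell h^{-1/2}$ or $U(\boldsymbol{x}) \ge \alpha_\ell h^{-1/2}$, where $\alpha_\ell \in (0,1)$ is a small constant to be fixed at the end. This is the natural dichotomy because Lemma~\ref{Eulerlocaldrift} is only available below the scale $h^{-1/2}$, and because the truncation in the statement does all the work above it.

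In the outer region $U(\boldsymbol{x}) \ge \alpha_\ell h^{-1/2}$ the bound is essentially free: since $U(\tilde{\boldsymbol{X}}_1) \wedge \alpha_\ell h^{-1/2} \le \alpha_\ell h^{-1/2}$ pointwise, taking $\ell$-th powers and expectations yields
\[
\mathbb{E}^{\boldsymbol{x}}\bigl(U(\tilde{\boldsymbol{X}}_1) \wedge \alpha_\ell h^{-1/2}\bigr)^\ell \le (\alpha_\ell h^{-1/2})^\ell = \bigl(U(\boldsymbol{x}) \wedge \alpha_\ell h^{-1/2}\bigr)^\ell\;,
\]
and the required inequality holds without the $K_\ell h$ term.

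For the inner region $U(\boldsymbol{x}) < \alpha_\ell h^{-1/2}$, the choice $\alpha_\ell \le 1$ guarantees $U(\boldsymbol{x}) < h^{-1/2}$, so Lemma~\ref{Eulerlocaldrift} applies. Using $U(\boldsymbol{x})^2 h^2 \le \alpha_\ell^2 h$ together with $1 - e^{-\gamma_\ell h} \le \gamma_\ell h$, I would rewrite the conclusion of that lemma as
\[
\mathbb{E}^{\boldsymbol{x}} U^\ell(\tilde{\boldsymbol{X}}_1) \le \bigl(e^{-\gamma_\ell h} + C_\ell \alpha_\ell^2 h\bigr) U^\ell(\boldsymbol{x}) + K_\ell h\;.
\]
Choosing $\alpha_\ell$ small enough that $C_\ell \alpha_\ell^2 \le \gamma_\ell / 2$, and using the elementary estimate $e^{-\gamma_\ell h} \le 1 - \gamma_\ell h/2$ valid for $h$ small, the prefactor of $U^\ell(\boldsymbol{x})$ is at most $1$. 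The conclusion then follows from $\bigl(U(\tilde{\boldsymbol{X}}_1) \wedge \alpha_\ell h^{-1/2}\bigr)^\ell \le U^\ell(\tilde{\boldsymbol{X}}_1)$ and the identity $U^\ell(\boldsymbol{x}) = \bigl(U(\boldsymbol{x}) \wedge \alpha_\ell h^{-1/2}\bigr)^\ell$ in this region. The case of $h$ bounded away from zero is handled separately: the truncated quantity is then bounded by a fixed constant $(\alpha_\ell h^{-1/2})^\ell$ which can be absorbed into $K_\ell h$ by enlarging $K_\ell$ if necessary.

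There is no real obstacle here, which justifies the paper's description of the statement as an ``immediate corollary''. The truncation scale $\alpha_\ell h^{-1/2}$ is tuned precisely so that the single-step error coefficient $C_\ell U(\boldsymbol{x})^2 h^2$ appearing in the local drift inequality becomes $O(h)$ and can be absorbed into the contractive factor $e^{-\gamma_\ell h}$; the only genuine choice is that of $\alpha_\ell$, which must be taken small relative to $\gamma_\ell / C_\ell$.
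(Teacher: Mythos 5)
Your proof is correct and follows essentially the same route as the paper: split according to whether $U(\boldsymbol{x})$ is below or above the truncation level $\alpha_\ell h^{-1/2}$, apply Lemma~\ref{Eulerlocaldrift} in the inner region with $\alpha_\ell$ chosen small enough that the prefactor $e^{-\gamma_\ell h} + C_\ell U^2(\boldsymbol{x}) h^2$ is at most $1$, and use the trivial bound $(\alpha_\ell h^{-1/2})^\ell$ in the outer region. Your version merely makes explicit the choice of $\alpha_\ell$ that the paper leaves implicit in the phrase ``provided that $\alpha_\ell$ is small enough.''
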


\begin{proof}
It follows from Lemma~\ref{Eulerlocaldrift} that, provided that $\alpha_\ell$ is small enough,
one has 
\begin{equ}
 \E^{\boldsymbol{x}} \bigl( U^{\ell}(\tilde{\boldsymbol{X}}_1) \wedge \alpha_\ell h^{-1/2}\bigr)
\le  \E^{\boldsymbol{x}} \bigl( U^{\ell}(\tilde{\boldsymbol{X}}_1) \bigr)
 \le U^{\ell}(\x) + h\,K_\ell\;,
\end{equ}
for all $\x$ such that $U(\x) \le \alpha_\ell h^{-1/2}$. On the other hand, one has the obvious
bound 
\begin{equ}
 \E^{\boldsymbol{x}} \bigl( U(\tilde{\boldsymbol{X}}_1) \wedge \alpha_\ell h^{-1/2}\bigr)^\ell
 \le \bigl(\alpha_\ell h^{-1/2}\bigr)^\ell\;,
\end{equ}
which is valid for all $\x$. Collecting both bounds concludes the proof.
\end{proof}


\begin{lemma} \label{EulerUellerror}
Let $\tilde{\boldsymbol{X}}_1$ and $\boldsymbol{Y}(h)$ denote forward Euler and the true
solution after one step, respectively.  For every $\ell \in \mathbb{N}$, there exists a constant 
$C_{\ell}>0$ such that the bound \[
\left| \E^{\boldsymbol{x}} \left( U^{\ell} (\tilde{\boldsymbol{X}}_1)- U^{\ell} (\boldsymbol{Y}(h)) \right) \right|
 \le C_{\ell} h^2 U^{\ell + 2}(\boldsymbol{x})
 \]
 holds for all $\boldsymbol{x} \in \R^n$  satisfying $U(\boldsymbol{x}) < h^{-1/2}$ and 
 for all $h<1$.
\end{lemma}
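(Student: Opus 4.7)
The plan is to Taylor-expand both $\E^{\boldsymbol{x}}\bigl(U^\ell(\tilde{\boldsymbol{X}}_1)\bigr)$ and $\E^{\boldsymbol{x}}\bigl(U^\ell(\boldsymbol{Y}(h))\bigr)$ around $\boldsymbol{x}$, observe that their $O(1)$ and $O(h)$ contributions both collapse to $U^\ell(\boldsymbol{x}) + h(\mathcal{L}U^\ell)(\boldsymbol{x})$, and then bound every remaining term by $C_\ell h^2 U^{\ell+2}(\boldsymbol{x})$. The strategy mirrors that of Lemma~\ref{MALAphiaccuracy}, but is cleaner because no Metropolis correction is present and the Lyapunov function $U^\ell$ has only polynomial growth. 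The key structural input is that Assumption~\ref{sa}~(D) propagates under taking powers: a direct induction gives $\|D^k U^\ell(\boldsymbol{y})\| \le C_{\ell,k} U^\ell(\boldsymbol{y})$ for $k \le 4$, so in particular $|\mathcal{L}^2 U^\ell(\boldsymbol{y})| \le C\, U^{\ell+2}(\boldsymbol{y})$.

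On the SDE side, two applications of It\^o's formula yield
\[
\E^{\boldsymbol{x}}\bigl(U^\ell(\boldsymbol{Y}(h))\bigr) = U^\ell(\boldsymbol{x}) + h(\mathcal{L}U^\ell)(\boldsymbol{x}) + h^2 \int_0^1 (1-t)\,\E^{\boldsymbol{x}}\bigl(\mathcal{L}^2 U^\ell(\boldsymbol{Y}(ht))\bigr)\,dt\;.
\]
Applying Remark~\ref{SDEdriftcondition2} with $\Theta(u) = u^{\ell+2}$ gives $\E^{\boldsymbol{x}} U^{\ell+2}(\boldsymbol{Y}(s)) \le C\bigl(U^{\ell+2}(\boldsymbol{x}) + K\bigr)$, so the remainder is bounded by $C h^2 U^{\ell+2}(\boldsymbol{x})$, as required.

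On the Euler side, write $\boldsymbol{Z} = \tilde{\boldsymbol{X}}_1 - \boldsymbol{x} = -h\nabla U(\boldsymbol{x}) + \sqrt{2\beta^{-1}h}\,\boldsymbol{\xi}$ with $\boldsymbol{\xi} \sim \mathcal{N}(0,I)$, and Taylor-expand $U^\ell$ to order four around $\boldsymbol{x}$. Taking the Gaussian expectation, the first- and second-order terms contribute $-h\nabla U^\ell \cdot \nabla U + \beta^{-1}h \Delta U^\ell + \tfrac{h^2}{2} D^2 U^\ell(\nabla U,\nabla U)$; the first two pieces assemble into exactly $h(\mathcal{L}U^\ell)(\boldsymbol{x})$ (which cancels against the SDE expansion), while the last is bounded by $C h^2 U^\ell \cdot U^2 = C h^2 U^{\ell+2}$. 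For the cubic Taylor term, split $\boldsymbol{Z}$ into its deterministic part $m = -h\nabla U$ and its Gaussian part $\boldsymbol{g}$; since the purely Gaussian cubic moment vanishes, every surviving contribution carries at least one factor of $m$, hence is of size at least $h \cdot h = h^2$, and is controlled using $\|D^3 U^\ell\| \le C U^\ell$ and $|\nabla U| \le C U$.

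The main obstacle is the fourth-order Taylor remainder
\[
R_4 \eqdef \frac{1}{6}\int_0^1 (1-t)^3\, \E\bigl(D^4 U^\ell(\boldsymbol{x} + t\boldsymbol{Z})(\boldsymbol{Z},\boldsymbol{Z},\boldsymbol{Z},\boldsymbol{Z})\bigr)\,dt\;,
\]
because $D^4 U^\ell$ is evaluated along the chord $\boldsymbol{x} + t\boldsymbol{Z}$ rather than at $\boldsymbol{x}$, and arbitrarily large excursions of $\boldsymbol{\xi}$ can in principle push $U$ up dramatically. The derivative bound $\|D^4 U^\ell(\boldsymbol{y})\| \le C U^\ell(\boldsymbol{y})$ combined with Lemma~\ref{lem:Uexp} yields $U^\ell(\boldsymbol{x} + t\boldsymbol{Z}) \le U^\ell(\boldsymbol{x}) \exp(C|\boldsymbol{Z}|)$. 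The hypothesis $U(\boldsymbol{x}) < h^{-1/2}$ together with $|\nabla U| \le CU$ forces $h|\nabla U(\boldsymbol{x})| \le C\sqrt{h}$, so $|\boldsymbol{Z}| \le C\sqrt{h}(1 + |\boldsymbol{\xi}|)$ and the Gaussian tail of $\boldsymbol{\xi}$ absorbs the exponential factor. One concludes $\E\bigl(|\boldsymbol{Z}|^4 \exp(C|\boldsymbol{Z}|)\bigr) \le C h^2$ uniformly in $h \le 1$, giving $|R_4| \le C h^2 U^\ell(\boldsymbol{x}) \le C h^2 U^{\ell+2}(\boldsymbol{x})$ since $U \ge 1$. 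Collecting the SDE remainder, the $O(h^2)$ quadratic piece, the cubic piece, and $R_4$ finishes the proof.
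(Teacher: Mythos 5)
Your proof is correct, but it takes a different route from the paper's. The paper avoids the spatial Taylor expansion on the Euler side altogether: it observes that one Euler step is equal in law to the time-$h$ marginal of the frozen-drift diffusion $d\boldsymbol{X} = -\nabla U(\boldsymbol{x})\,dt + \sqrt{2\beta^{-1}}\,d\boldsymbol{W}$, whose generator $\mathcal{L}_h$ satisfies $\mathcal{L}_h U^\ell(\boldsymbol{x}) = \mathcal{L}U^\ell(\boldsymbol{x})$ at the starting point. An It\^o--Taylor expansion of \emph{both} processes then makes the zeroth- and first-order terms cancel identically, leaving only
\begin{equation*}
\E^{\boldsymbol{x}}\Bigl(\int_0^h\!\!\int_0^s \bigl(\mathcal{L}_h^2 U^{\ell}(\boldsymbol{X}(r)) - \mathcal{L}^2 U^{\ell}(\boldsymbol{Y}(r))\bigr)\,dr\,ds\Bigr)\;,
\end{equation*}
which is bounded using $|\mathcal{L}^2 U^\ell| \le C U^{\ell+2}$, $|\mathcal{L}_h^2 U^\ell(\boldsymbol{y})| \le CU^2(\boldsymbol{x})U^\ell(\boldsymbol{y})$, the Lyapunov bound of Remark~\ref{SDEdriftcondition2} for the true flow, and Lemma~\ref{lem:Uexp} with $h|\nabla U(\boldsymbol{x})|\le C\sqrt h$ for the frozen flow. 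Your expansion instead mirrors the paper's proof of Lemma~\ref{MALAphiaccuracy}: you pay for the explicitness with extra bookkeeping (the cubic Gaussian moments, the mixed $m$--$\boldsymbol{g}$ terms, and the fourth-order remainder $R_4$ with its $\exp(C|\boldsymbol{Z}|)$ factor), all of which you handle correctly using Assumption~\ref{sa}~(D), Lemma~\ref{lem:Uexp}, and the hypothesis $U(\boldsymbol{x}) < h^{-1/2}$. The paper's frozen-drift device is the cleaner of the two here precisely because, unlike in Lemma~\ref{MALAphiaccuracy}, there is no acceptance probability $\alpha_h$ breaking the symmetry between the two expansions; but your argument is complete and yields the same bound.
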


\begin{proof}
Observe that a single step of forward Euler is equivalent in law to the
following Langevin diffusion with constant drift:  \[
\tilde{\boldsymbol{X}}_1 \sim \boldsymbol{X}(h)  
\]
where the process $\boldsymbol{X}$ satisfies
\begin{equation}
d \boldsymbol{X} = 
- \nabla U(\boldsymbol{x}) dt + \sqrt{2 \beta^{-1}} d \boldsymbol{W},
~~~\boldsymbol{X}(0) = \boldsymbol{x} \;.
\label{SDEstar}
\end{equation}
The infinitesimal generator of this process is given by: \[
(\mathcal{L}_h g)(\boldsymbol{y}) = 
- \nabla U(\boldsymbol{x})^T \nabla g(\boldsymbol{y}) 
+  \beta^{-1} \Delta g( \boldsymbol{y})  \;.
\]
Since $\mathcal{L}_h U^{\ell}(\boldsymbol{x}) = \mathcal{L}  U^{\ell}(\boldsymbol{x})$,  
an exact It\^o-Taylor expansion yields,
\begin{align*}
& \E^{\boldsymbol{x}} 
\Bigl( U^{\ell}(\tilde{\boldsymbol{X}}_1)  - U^{\ell}(\boldsymbol{Y}(h))  \Bigr)  =  \\
& \qquad \E^{\boldsymbol{x}} \Bigl( 
\int_0^h \int_0^s \left( 
\mathcal{L}_h^2 U^{\ell} (\boldsymbol{X}(r)) - \mathcal{L}^2 U^{\ell}(\boldsymbol{Y}(r))  
\right) dr ds  \Bigr)\;.
\end{align*}
The triangle inequality implies
\begin{equs}\label{Uellerrorexpansion}
\Bigl| \E^{\boldsymbol{x}} 
&\Bigl( U^{\ell}(\tilde{\boldsymbol{X}}_1)  - U^{\ell}(\boldsymbol{Y}(h))  \Bigr)  \Bigr| \le  \\
& \int_0^h \int_0^s  \Bigl|\E^{\boldsymbol{x}}  \Bigl( \mathcal{L}_h^2 U^{\ell} (\boldsymbol{X}(r)) \Bigr)\Bigr|\,dr\,ds    + 
\int_0^h \int_0^s \Bigl|\E^{\boldsymbol{x}} \Bigl(  \mathcal{L}^2 U^{\ell}(\boldsymbol{Y}(r))  \Bigr)\Bigr|\,dr\,ds\;.
\end{equs}
Assumption~\ref{sa}~(D) implies that there exists a positive constant $C$ such that \[
( \mathcal{L}^2 U^{\ell} ) (\boldsymbol{y})   \le C U^{\ell + 2}(\boldsymbol{y}) \;, ~~
 ( \mathcal{L}_h^2 U^{\ell} ) (\boldsymbol{y})  \le C U^2(\boldsymbol{x})   U^{\ell}(\boldsymbol{y}) \;,
\]
for all $\boldsymbol{x}, \boldsymbol{y} \in \mathbb{R}^n$.   
These inequalities bound the integrands in \eqref{Uellerrorexpansion}.  
For the second term, we have 
\begin{equ}[e:boundLUY]
 \left| \E^{\boldsymbol{x}} \Bigl( \mathcal{L}^2 U^{\ell}(\boldsymbol{Y}(r))   \Bigr)  \right|  \le 
C \E^{\boldsymbol{x}} \Bigl(  U^{\ell+2}(\boldsymbol{Y}(r))   \Bigr)   \le  \tilde{C} U^{\ell+2}(\boldsymbol{x}) \;,
\end{equ}
where Remark~\ref{SDEdriftcondition2} is used in the last step.

To bound the first term, note that
\begin{equ}[e:boundLU]
\left|  \E^{\boldsymbol{x}}  \Bigl( \mathcal{L}_h^2 U^{\ell} (\boldsymbol{X}(r)) \Bigr) \right|   \le 
C U^2(\boldsymbol{x}) \E^{\boldsymbol{x}} \Bigl(  U^{\ell} (\boldsymbol{X}(r))  \Bigr)   
\end{equ}
where $r \in [0, h]$. The definition of an Euler step yields
\[
 \E^{\boldsymbol{x}} \Bigl(  U^{\ell}(\boldsymbol{X}(r))   \Bigr)   =
 (2 \pi)^{-n/2} \int_{\mathbb{R}^n}  
 U^{\ell}(\boldsymbol{x} - r \nabla U(\boldsymbol{x}) + \sqrt{2 \beta^{-1} r} \boldsymbol{\xi})
 \exp\left(- \frac{ | \boldsymbol{\xi} |^2}{2} \right) d \boldsymbol{\xi}  \;.
\]
Since, by hypothesis, \[
r | \nabla U(\boldsymbol{x}) | \le h  | \nabla U(\boldsymbol{x}) | \le C h  | U(\boldsymbol{x}) | \le C \sqrt{h} \;,
\]
it follows from Lemma~\ref{lem:Uexp} that 
\[
U^{\ell}(\boldsymbol{x} -r \nabla U(\boldsymbol{x}) + \sqrt{2 \beta^{-1} r} \boldsymbol{\xi}) \le
\exp\left(\ell \sqrt{h} (C+\sqrt{2 \beta^{-1}} | \boldsymbol{\xi} |)\right) U^{\ell}(\boldsymbol{x})
\]
for all $r \in [0, h]$.  Therefore,  \begin{equation} \label{EulerUellBound}
\E^{\boldsymbol{x}} \bigl(  U^{\ell}(\boldsymbol{X}(r))   \bigr) \le C U^{\ell}(\boldsymbol{x}) \;,
\end{equation}
for some $C>0$ independent of $h$. Combining \eref{EulerUellBound}, \eref{e:boundLU} and \eref{e:boundLUY} 
and inserting these bounds into \eref{Uellerrorexpansion}
yields the required bound.
\end{proof}

\section{Conclusion}

In this paper we showed that MALA's lack of a spectral gap is not severe.   
In particular, our main result, Theorem~\ref{theo:main}, states its convergence to equilibrium happens at 
exponential rate up to terms exponentially small in time-stepsize.  This quantification 
relies on MALA exactly preserving the SDE's invariant measure and accurately 
representing the SDE's transition probability on finite time intervals.  The first
property is automatic since the target distribution in the Metropolis-Hastings step is
the SDE's equilibrium distribution.  Deriving the second property requires a 
generalization of finite-time estimates for MALA \cite{BoVa2010A} 
and forward Euler \cite{BaTa1995, MaStHi2002}.   This derivation involves obtaining
new results on the accuracy of MALA and forward Euler with respect to the true solution
of the SDE in the context where the drift is not be globally Lipschitz.

A key technical issue addressed in the proof of Theorem~\ref{theo:main} is that MALA locally inherits a 
Lyapunov function of the true solution $\Phi(x) = \exp( \theta U(x))$.  
Since $U$ grows faster than a quadratic function, the function $\Phi$ is not integrable with respect to 
a Gaussian measure including the transition probability of forward Euler.    Nevertheless, 
we prove integrability of $\Phi$ with respect to the transition probability of MALA as a 
consequence of MALA preserving an equilibrium measure whose tails decrease faster
than  $\Phi$ increases.

Finite-time accuracy implied MALA inherits a minorization and local drift condition 
from the SDE.   As a consequence the paper proved that its mixing time is nearby the 
mixing time of the SDE on compact sets.  The patching argument in 
Theorem~\ref{theo:main} compares MALA to a version of MALA with 
reflection on the boundary of these compact sets to boost this local property to a 
global property plus terms exponentially small in time-stepsize.

Finally, we note that the proof of Lemma~\ref{1term} motivates the
following question: is forward Euler a strongly or weakly convergent
method on finite time intervals?  The answer is no because a necessary
condition for a numerical method to converge on finite time intervals
is stability which we have shown forward Euler lacks for nonglobally
Lipschitz drifts.  However, the lemma does motivate using forward
Euler as a proposal chain in the Metropolis-Hastings algorithm to
sample from the equilibrium measure of the SDE.


\bibliography{nawaf}{}
\bibliographystyle{Martin}

 \medskip

\end{document}